\newcommand\C{{\cal C}}
\newcommand\D{{\cal D}}
\newcommand\N{{\cal N}}
\newtheorem{theorem}{Theorem}[section]
\newtheorem{lemma}[theorem]{Lemma}
\newtheorem{corollary}[theorem]{Corollary}
\newtheorem{definition}[theorem]{Definition}
\newtheorem{construction}{Construction}
\newenvironment{proof}{\noindent{\bf Proof}\hspace{0.5em}}
    { \null  \hfill $\square$ \par}
\renewcommand{\gg}{g}
\newcommand{\R}{\mathcal R}
\renewcommand{\S}{\mathcal S}
\renewcommand{\P}{\mathcal P}
\newcommand{\abb}{{\cal A(\cal S)}}
\newcommand{\pbb}{{\cal P(\cal S)}}
\newcommand{\Bpi}{\mathscr B}
\newcommand{\ES}{{\mathbb S}}
\newcommand{\ET}{\mathbb T}
\newcommand{\EC}{\mathbb C}
\newcommand{\EX}{\mathbb X}
\newcommand{\EY}{\mathbb Y}
\newcommand{\gt}{g_\ET}
\newcommand{\gc}{g_\EC}
\newcommand{\gs}{g_\ES}
\newcommand{\TP}{{\mathcal T}_P}
\renewcommand{\r}{{q}}
\newcommand\w{\tau}
\newcommand{\si}{\Sigma_\infty}
\newcommand{\li}{\ell_\infty}
\newcommand{\orsps}{order-$\r$-subplanes}
\newcommand{\orsls}{order-$\r$-sublines}
\newcommand{\orsp}{order-$\r$-subplane}
\newcommand{\orsl}{order-$\r$-subline}
\newcommand{\psline}{pencil-subline}
\newcommand{\dcsline}{dual-conic-subline}
\newcommand{\elltau}{\ell= [-\tau\tau^q, \tau^q+\tau,-1]}
\newcommand{\takeaway}{\backslash}
\newcommand{\st}{:}
\newcommand\PGL{{\rm PGL}}
\newcommand\GF{{\rm GF}}
\newcommand\PG{{\rm PG}}
\newcommand{\car}{E} 
\renewcommand\setminus{\backslash}
\newcommand{\Label}{\label}
\begin{document}
%

%
%
%

\title{
The exterior splash in $\PG(6,q)$: Special conics}

\author{S.G. Barwick and Wen-Ai Jackson
}

\maketitle



\begin{abstract}
Let $\pi$ be an order-$q$-subplane of $\PG(2,q^3)$ that is exterior to $\ell_\infty$. The exterior splash of $\pi$ is the set of $q^2+q+1$ points on $\ell_\infty$ that lie on an extended line of $\pi$. Exterior splashes are projectively equivalent to scattered linear sets of rank 3, covers of the circle geometry $CG(3,q)$, and hyper-reguli of $\PG(5,q)$. In this article we use the Bruck-Bose representation in $\PG(6,q)$ to give a geometric characterisation of special conics of $\pi$ in terms of the covers of the exterior splash of $\pi$. We also investigate properties of \orsps\ with a common exterior splash, and study the intersection of two exterior splashes. 
\end{abstract}

\section{Introduction}

Let $\pi$ be 
a subplane of $\PG(2,q^3)$ of order $q$ that is exterior to $\li$. We call $\pi$ an {\em exterior \orsp\ of $\PG(2,q^3)$}. The lines of $\pi$ meet $\li$ in a set $\ES$ of size $q^2+q+1$, called the {\em exterior splash} of $\pi$ on $\li$. 
The exterior splash turns out to be a set rich in geometric structure. In particular, \cite{BJ-ext1} showed that the sets of points in an exterior splash has arisen in many different situations, namely scattered linear sets of rank 3, Sherk surfaces of size $q^2+q+1$, covers of the circle geometry $CG(3,q)$, and so hyper-reguli in $\PG(5,q)$.  
Properties of the exterior splash in the Bruck-Bose representation in $\PG(6,q)$ are studied in \cite{BJ-ext2}. 
This current article continues this study, and furthers the investigation into the interplay between an exterior \orsp\  in $\PG(2,q^3)$, and its associated exterior splash.

This article proceeds as follows. 
In Section~\ref{sect:intro} we
introduce the notation we use for 
 the Bruck-Bose representation of $\PG(2,q^3)$ in $\PG(6,q)$, 
and give the  background on exterior splashes needed to understand the results of this article. Section 3 looks at the Bruck-Bose representation of a class of special conics in an exterior \orsp\ of $\PG(2,q^3)$.  In particular, Section 3.2 contains the main characterisation of this article. We show that the $(\pi,\li)$-special conics in an exterior \orsp\ $\pi$ of $\PG(2,q^3)$ correspond exactly to the $\EX$-special twisted cubics in $\PG(6,q)$, where $\EX$ is a cover of an exterior splash.
We then explore some of the fundamental geometric differences
between the two covers of an exterior splash with respect to an associated \orsp.
Section 4 investigates \orsps\ with a common splash, and Section 5 looks at the intersection of two exterior splashes. 

\section{Background results}\Label{sect:intro}

\subsection{The Bruck-Bose representation of $\PG(2,q^3)$ in $\PG(6,q)$}\Label{BBintro}

Here we  introduce the
notation we will use for the 
Bruck-Bose representation of $\PG(2,q^3)$ in $\PG(6,q)$. We work with the finite field ${\mathbb F}_{q}=\GF(q)$, and let ${\mathbb F}_{q}'={\mathbb F}_{q}\setminus\{0\}$.
A 2-{\em spread} of $\PG(5,\r)$ is a set of $\r^3+1$ planes that partition
$\PG(5,\r)$. 
A 2-{\em regulus} of $\PG(5,\r)$ is a
set of $\r+1$ mutually disjoint planes $\pi_1,\ldots,\pi_{\r+1}$ with
the property that if a line meets three of the planes, then it meets all
${\r+1}$ of them. A 2-regulus $\R$ has a set of $q^2+q+1$ mutually disjoint {\em ruling lines} that meet every plane of $\R$.  A $2$-spread $\S$ is {\em regular} if for any three planes in $\S$, the
$2$-regulus containing them is contained in $\S$. 
See \cite{hirs91} for
more information on $2$-spreads.

The following construction of a regular $2$-spread of $\PG(5,\r)$ will be
needed. Embed $\PG(5,\r)$ in $\PG(5,\r^3)$ and let $g$ be a line of
$\PG(5,\r^3)$ disjoint from $\PG(5,\r)$. Let $g^\r$, $g^{\r^2}$ be the
conjugate lines of $g$; both of these are disjoint from $\PG(5,\r)$. Let $P_i$ be
a point on $g$; then the plane $\langle P_i,P_i^\r,P_i^{\r^2}\rangle$ meets
$\PG(5,\r)$ in a plane. As $P_i$ ranges over all the points of  $g$, we get
$\r^3+1$ planes of $\PG(5,\r)$ that partition $\PG(5,\r)$. These planes form a
regular $2$-spread $\S$ of $\PG(5,\r)$. The lines $g$, $g^\r$, $g^{\r^2}$ are called the (conjugate
skew) {\em transversal lines} of the $2$-spread $\S$. Conversely, given a regular $2$-spread
in $\PG(5,\r)$,
there is a unique set of three (conjugate skew) transversal lines in $\PG(5,\r^3)$ that generate
$\S$ in this way.

We will use the linear representation of a finite
translation plane $\P$ of dimension at most three over its kernel,
due independently to
Andr\'{e}~\cite{andr54} and Bruck and Bose
\cite{bruc64,bruc66}. 
Let $\si$ be a hyperplane of $\PG(6,\r)$ and let $\S$ be a $2$-spread
of $\si$. We use the phrase {\em a subspace of $\PG(6,\r)\takeaway\si$} to
  mean a subspace of $\PG(6,\r)$ that is not contained in $\si$.  Consider the following incidence
structure:
the \emph{points} of $\abb$ are the points of $\PG(6,\r)\takeaway\si$; the \emph{lines} of $\abb$ are the $3$-spaces of $\PG(6,\r)\takeaway\si$ that contain
  an element of $\S$; and \emph{incidence} in $\abb$ is induced by incidence in
  $\PG(6,\r)$.
Then the incidence structure $\abb$ is an affine plane of order $\r^3$. We
can complete $\abb$ to a projective plane $\pbb$; the points on the line at
infinity $\li$ have a natural correspondence to the elements of the $2$-spread $\S$.
The projective plane $\pbb$ is the Desarguesian plane $\PG(2,\r^3)$ if and
only if $\S$ is a regular $2$-spread of $\si\cong\PG(5,\r)$ (see \cite{bruc69}).
For the remainder of the article, we use $\S$ to denote a regular 2-spread of $\si\cong\PG(5,q)$.

We use the following notation.  If $T$
is a point of $\li$ in $\PG(2,q^3)$, we use $[T]$ to refer to the plane of $\S$ corresponding to $T$. More generally, if $X$
is a set of points of $\PG(2,q^3)$, then we let $[X]$ denote the
corresponding set  in $\PG(6,q)$. If $P$ is an affine point of $\PG(2,q^3)$,  we generally simplify the notation and also
use $P$  to refer to the corresponding affine point in $\PG(6,q)$, although in  some cases, to avoid confusion, we use $[P]$.

When $\S$ is a regular 2-spread, 
we can relate the coordinates of $\pbb\cong\PG(2,\r^3)$ and $\PG(6,\r)$ as
follows. Let $\tau$ be a primitive element in ${\mathbb F}_{q^3}$ with primitive
polynomial $x^3-t_2x^2-t_1x-t_0.$ Then every element $\alpha\in{\mathbb F}_{q^3}$
can be uniquely written as $\alpha=a_0+a_1\tau+a_2\tau^2$ with
$a_0,a_1,a_2\in{\mathbb F}_{q}$. Points in $\PG(2,\r^3)$ have homogeneous coordinates
$(x,y,z)$ with $x,y,z\in{\mathbb F}_{q^3}$. Let the line at infinity $\li$ have
equation $z=0$; so the affine points of $\PG(2,\r^3)$ have coordinates
$(x,y,1)$. Points in $\PG(6,\r)$ have homogeneous coordinates
$(x_0,x_1,x_2,y_0,y_1,y_2,z)$ with $x_0,x_1,x_2,y_0,y_1,y_2,z\in{\mathbb F}_{q}$.  Let $\si$ have equation $z=0$. 
Let $P=(\alpha,\beta,1)$ be a point of $\PG(2,\r^3)$. We can write $\alpha=a_0+a_1\tau+a_2\tau^2$ and
$\beta=b_0+b_1\tau+b_2\tau^2$ with $a_0,a_1,a_2,b_0,b_1,b_2\in{\mathbb F}_{q}$. 
We want to map the element $\alpha$ of ${\mathbb F}_{q^3}$ to the vector $(a_0,a_1,a_2)$, and we use the following notation to do this:
$$[\alpha]=(a_0,a_1,a_2).$$
This gives us some notation for the Bruck-Bose map, denoted $\epsilon$, from an affine point $P=(\alpha,\beta,1)\in\PG(2,\r^3)\takeaway\li$ to the corresponding affine point $[P]\in\PG(6,\r)\takeaway\Sigma_\infty$, namely  $\epsilon(\alpha,\beta,1)=[(\alpha,\beta,1)]=([\alpha],[\beta],1)=(a_0,a_1,a_2,b_0,b_1,b_2,1).$
More generally, if $z\in{\mathbb F}_{q}$, then 
$\epsilon(\alpha,\beta,z)=([\alpha],[\beta],z)=(a_0,a_1,a_2,b_0,b_1,b_2,z).$

Consider the case when $z=0$, a point on $\li$ in $\PG(2,q^3)$ has coordinates
 $L=(\alpha,\beta,0)$ for some $\alpha,\beta\in{\mathbb F}_{q^3}$.
 In $\PG(6,q)$, the point 
 $\epsilon(\alpha,\beta,0)=([\alpha],[\beta],0)$  is one point in the spread element $[L]$
corresponding to $L$. Moreover, the spread element $[L]$ consists of all the points $\{([\alpha x],[\beta x],0)\st x\in{\mathbb F}_{q^3}'\}$.

\subsection{Introducing exterior splashes}\Label{sec:intro-ext}

This article relies heavily on properties of exterior splashes proved in \cite{BJ-ext1,BJ-ext2}. In this section we detail the most important definitions and results that are needed in this article. We begin with the important result that all exterior splashes are projectively equivalent. 

\begin{theorem}\Label{transsplashes}{\rm\cite{BJ-ext1}}
Consider the collineation group  $G=\PGL(3,\r^3)$ acting on $\PG(2,q^3)$. The 
subgroup $G_\ell$ fixing a line $\ell$ is transitive on the \orsps\ that are exterior to $\ell$, and is transitive on the
exterior splashes of $\ell$.
\end{theorem}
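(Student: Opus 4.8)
The plan is to deduce everything from transitivity of $G_\ell$ on the \orsps\ exterior to $\ell$. Transitivity on the exterior splashes is then immediate: every exterior splash is by definition the splash of some exterior \orsp, and a collineation $g\in G_\ell$ sends the extended lines of $\pi$ to the extended lines of $g(\pi)$, hence carries the splash of $\pi$ onto the splash of $g(\pi)$; so an orbit of \orsps\ maps onto an orbit of splashes. To prove $G_\ell$ transitive on exterior \orsps\ I would first establish that $G=\PGL(3,q^3)$ is transitive on the pairs $(\pi,m)$ with $\pi$ an \orsp\ and $m$ a line exterior to $\pi$. This suffices: if $\pi_1,\pi_2$ are exterior to $\ell$ then $(\pi_1,\ell),(\pi_2,\ell)$ are such pairs, and any $g\in G$ carrying the first to the second fixes $\ell$ and maps $\pi_1$ to $\pi_2$, so $g\in G_\ell$. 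Transitivity on pairs I split, in the standard way, into (i) $G$ is transitive on \orsps, and (ii) the stabiliser of one \orsp\ $\pi_0$ is transitive on the lines exterior to $\pi_0$; composing the two gives transitivity on pairs.

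For (i) I take $\pi_0=\PG(2,q)$ to be the subfield subplane whose points have coordinates in $\mathbb{F}_q\subset\mathbb{F}_{q^3}$. Since $G$ is sharply transitive on quadrangles, I map a quadrangle of an arbitrary \orsp\ to the standard frame $(1,0,0),(0,1,0),(0,0,1),(1,1,1)$; by the classical theory of subplanes of a Desarguesian plane, the unique \orsp\ through this frame is $\pi_0$, so every \orsp\ is $G$-equivalent to $\pi_0$. Moreover $\PGL(3,q)\le G$ evidently fixes $\pi_0$, so for (ii) it is enough to prove that $\PGL(3,q)$ is transitive on the lines of $\PG(2,q^3)$ exterior to $\pi_0$.

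Part (ii) is the heart of the argument, and I would prove it by a matrix reformulation. A line of $\PG(2,q^3)$ is a dual point $(a,b,c)$; expanding each coordinate in the basis $1,\tau,\tau^2$ of Section~\ref{BBintro} yields a $3\times3$ matrix $M$ over $\mathbb{F}_q$ with columns $[a],[b],[c]$. The line contains a point of $\pi_0$ precisely when $ax+by+cz=0$ has a nonzero solution with $x,y,z\in\mathbb{F}_q$, that is, precisely when $M$ is singular; hence the exterior lines correspond to the nonsingular $M$, each taken modulo the scalar action $M\mapsto L_\lambda M$ of $\lambda\in\mathbb{F}_{q^3}^{*}$, where $L_\lambda\in\mathrm{GL}(3,q)$ is multiplication by $\lambda$ on $\mathbb{F}_{q^3}\cong\mathbb{F}_q^3$. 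A short computation shows the action of $g\in\PGL(3,q)$ becomes $M\mapsto Mg^{-1}$. Given nonsingular $M,N$, the choice $g=N^{-1}M\in\mathrm{GL}(3,q)$ gives $Mg^{-1}=N$, so all exterior lines form a single $\PGL(3,q)$-orbit. The stabiliser of one exterior line is the image of $\{L_\lambda\}$, a Singer cycle of order $q^2+q+1$, and the resulting orbit length $|\PGL(3,q)|/(q^2+q+1)=q^3(q-1)^2(q+1)$ matches a direct count of exterior lines, as a check.

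The step I expect to need the most care is the bookkeeping in (ii): translating both the collineation action and the $\mathbb{F}_{q^3}^{*}$-scaling into the two-sided matrix action $M\mapsto L_\lambda Mg^{-1}$, and verifying the meets-iff-singular dictionary. Part (i) is essentially classical, but it does rest on the (standard) uniqueness of the order-$q$ subfield subplane through a fixed frame, which I would either cite or establish by coordinatisation.
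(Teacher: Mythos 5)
Your proof is correct, but note there is nothing in this paper to compare it against: Theorem~\ref{transsplashes} is quoted as a background result from the companion paper \cite{BJ-ext1}, and no proof appears here. Judged on its own terms, your argument is sound and, in fact, fits the framework the authors use elsewhere. The reductions are all valid: splash-transitivity does follow formally from subplane-transitivity because the map sending an exterior \orsp\ to its splash is surjective and $G_\ell$-equivariant, and pair-transitivity correctly decomposes into (i) plus (ii). In (i), the one load-bearing classical fact is the uniqueness of the \orsp\ through a fixed frame; this holds because an order-$q$-subplane containing the standard frame is coordinatised by a subfield of order $q$ of ${\mathbb F}_{q^3}$, and that subfield is unique -- you are right that this should be cited or proved, but it is standard. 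Your part (ii) checks out in detail: for $x,y,z\in{\mathbb F}_{q}$ one has $[ax+by+cz]=M(x,y,z)^t$ with $M$ the matrix with columns $[a],[b],[c]$, so ``exterior to $\pi_0$'' is exactly ``$M$ nonsingular''; the two-sided action $M\mapsto L_\lambda Mg^{-1}$ is the right bookkeeping (left and right multiplications commute, so the action on $L_\lambda$-orbits is well defined), and $g=N^{-1}M$ gives transitivity at once. Your consistency check is also correct: the stabiliser $\{L_\lambda\}$ modulo ${\mathbb F}_{q}^*$ scalars is a Singer group of order $q^2+q+1$, and $|\PGL(3,q)|/(q^2+q+1)=q^3(q-1)^2(q+1)$ agrees with the direct count $q^6+q^3+1-(q^2+q+1)(q^3-q+1)$ of lines exterior to $\pi_0$. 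It is worth observing that this Singer group is precisely the group $I=\PGL(3,q^3)_{\pi,\ell}$ of order $q^2+q+1$ that drives Section~3 of the paper (it defines the carriers and the special conics), so your stabiliser computation is not just a check -- it recovers a structural ingredient the paper relies on, which suggests your route is close in spirit to the one in \cite{BJ-ext1} even though a line-by-line comparison is not possible from the text given.
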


This theorem says that if we want to prove a result about exterior \orsps, or about exterior splashes, then we can without loss of generality prove it for a particular \orsp. In \cite{BJ-ext2},  an \orsp\ $\Bpi$ which is exterior to $\li$ is coordinatised, and $\Bpi$ is used in many of the proofs in this article. The main points of this coordinatisation are as follows. 

\begin{theorem}\Label{thm:Bpi}{\rm\cite{BJ-ext2}}
 Let $\sigma$ be the homography of $\PG(2,q^3)$ with matrix
\begin{eqnarray*}
K=\begin{pmatrix}-\tau&1&0\\ -\tau^q&1&0\\ \tau\tau^q&-\tau-\tau^q&1\end{pmatrix}. 
\end{eqnarray*}
Then $\sigma$ maps the \orsp\ $\pi_0=\PG(2,q)$ with exterior line $\elltau$ to an \orsp\ $\Bpi=\sigma(\pi_0)$ with exterior line  $\li=\sigma(\ell)=[0,0,1]$.
Further, $\Bpi$ has exterior splash 
$\ES=\{ (k,1,0)\st k\in{\mathbb F}_{q^3}, 
k^{q^2+q+1}=1\}\equiv\{(\tau^{(q-1)i},1,0)\st 0\le i < q^2+q+1\}$
 and carriers $\car_1=(1,0,0)$ and $\car_2=(0,1,0)$.
 \end{theorem}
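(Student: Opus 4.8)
The plane $\pi_0=\PG(2,q)$ is the canonical \orsp\ of $\PG(2,q^3)$, namely the fixed points of the Frobenius collineation $\Phi\colon(x,y,z)\mapsto(x^q,y^q,z^q)$. Since $\sigma\in\PGL(3,q^3)$ is a homography, it automatically sends \orsps\ to \orsps\ and exterior lines to exterior lines; hence, once we check that $\ell=[-\tau\tau^q,\tau^q+\tau,-1]$ is exterior to $\pi_0$, the plane $\Bpi=\sigma(\pi_0)$ is an \orsp\ with exterior line $\sigma(\ell)$. The plan is then to (i) verify exteriority, (ii) compute $\sigma(\ell)$, (iii) compute the splash of $\Bpi$ on $\li$, and (iv) locate the carriers.

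For (i), suppose a point $(x,y,z)$ with $x,y,z\in\GF(q)$ lies on $\ell$, so $-\tau\tau^q\,x+(\tau+\tau^q)\,y-z=0$. Applying the Frobenius $u\mapsto u^q$ (which fixes $x,y,z$) once and then twice, and using $\tau^{q^3}=\tau$, yields two further relations obtained by replacing $(\tau,\tau^q)$ by $(\tau^q,\tau^{q^2})$ and by $(\tau^{q^2},\tau)$. Writing $a=\tau$, $b=\tau^q$, $c=\tau^{q^2}$, the three relations form a homogeneous linear system in $(x,y,z)$ with coefficient matrix
\begin{equation*}
\begin{pmatrix}-ab & a+b & -1\\ -bc & b+c & -1\\ -ca & c+a & -1\end{pmatrix},
\end{equation*}
whose determinant equals, up to sign, the Vandermonde product $(a-b)(b-c)(c-a)$. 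As $\tau$ has degree $3$ over $\GF(q)$, the conjugates $a,b,c$ are distinct, the determinant is nonzero, so $x=y=z=0$: thus $\ell$ meets no point of $\pi_0$ and is exterior. For (ii) it suffices to note that $\ell=-[0,0,1]K$, i.e. $\ell$ is a scalar multiple of the third row of $K$; since a homography with point-matrix $K$ sends a line with coefficient row $\mathbf n$ to the line $\mathbf n K^{-1}$, we get $\sigma(\ell)=[0,0,1]=\li$.

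For (iii), a line of $\pi_0$ has the form $m=[u,v,w]$ with $u,v,w\in\GF(q)$, and its splash point is the image under $\sigma$ of $m\cap\ell$, namely $K(m\times\ell)$. Computing $m\times\ell$, applying $K$, and simplifying with $\tau\tau^q=\tau^{q+1}$ collapses the third coordinate to $0$ (confirming the point lies on $\li$) and turns the first two coordinates into $u+v\tau+w\tau^2$ and its $q$-th power $u+v\tau^q+w\tau^{2q}$. Hence the splash point is $(f(\tau),f(\tau)^q,0)$ with $f(X)=u+vX+wX^2$. Because $\{1,\tau,\tau^2\}$ is an $\GF(q)$-basis, as $[u,v,w]$ ranges over the lines of $\pi_0$ the element $k'=f(\tau)$ ranges over $\GF(q^3)^\ast$ modulo $\GF(q)^\ast$; normalising gives $(k,1,0)$ with $k=k'^{\,1-q}$, and $k^{q^2+q+1}=k'^{\,(1-q)(q^2+q+1)}=k'^{\,1-q^3}=1$. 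Since $k'\mapsto k'^{1-q}$ has kernel $\GF(q)^\ast$ and image of size $q^2+q+1$, it surjects onto the norm-one subgroup, so the splash is exactly $\ES=\{(k,1,0)\st k^{q^2+q+1}=1\}$, which equals $\{(\tau^{(q-1)i},1,0)\}$ on taking $k'=\tau^{-i}$.

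Finally, for (iv), the carriers of $\Bpi$ are the $\sigma$-images of the carriers of $(\pi_0,\ell)$, which are the two vertices of the conjugate triangle $\ell,\ell^q,\ell^{q^2}$ that lie on $\ell$ --- equivalently the two points where $\ell$ meets the conic $\{(1,t,t^2)\}$ --- namely $R_1=(1,\tau,\tau^2)$ and $R_2=(1,\tau^q,\tau^{2q})$. A direct multiplication gives $KR_1=(0,\tau-\tau^q,0)\sim(0,1,0)=\car_2$ and $KR_2=(\tau^q-\tau,0,0)\sim(1,0,0)=\car_1$. I expect the only genuine subtlety to be bookkeeping: keeping the point/line duality of $\sigma$ straight (lines transform by $K^{-1}$), and matching this concrete pair $\{R_1,R_2\}$ to the intrinsic definition of carriers in \cite{BJ-ext1}. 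The arithmetic throughout is routine once the splash point is recognised in the form $(f(\tau),f(\tau)^q,0)$, which is the key simplification driving the whole computation.
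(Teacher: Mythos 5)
The paper does not prove this theorem: it is stated as a background result imported from \cite{BJ-ext2}, so there is no internal proof to compare your argument against. Judged on its own merits, your proof is correct and complete, and every computational claim checks out. The Vandermonde determinant in step (i) is indeed $\pm(a-b)(b-c)(c-a)\neq 0$, forcing $x=y=z=0$, so $\ell$ is exterior to $\pi_0$. Your observation in (ii) that $\ell$ is $(-1)$ times the third row of $K$, combined with the correct transformation rule $\mathbf{n}\mapsto \mathbf{n}K^{-1}$ for line coordinates, disposes of $\sigma(\ell)=[0,0,1]$ with no computation at all. In (iii) I verified that $K(m\times\ell)$ has third coordinate identically zero and first two coordinates $f(\tau)$ and $f(\tau)^q$ with $f(X)=u+vX+wX^2$; your counting argument (the map $k'\mapsto k'^{1-q}$ on ${\mathbb F}_{q^3}^\ast$ has kernel exactly $\GF(q)^\ast$, hence image of size $q^2+q+1$, which must be the full norm-one subgroup) correctly upgrades ``the splash is contained in $\ES$'' to equality, and the identification with $\{(\tau^{(q-1)i},1,0)\}$ is right. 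In (iv), your identification of the carriers of $(\pi_0,\ell)$ as $R_1=\ell\cap\ell^{q^2}=(1,\tau,\tau^2)$ and $R_2=\ell\cap\ell^{q}=(1,\tau^q,\tau^{2q})$ matches both the intrinsic definition in Section 3.1 of this paper (for $\pi_0=\PG(2,q)$ the conjugate lines of $\ell$ are $\ell^q,\ell^{q^2}$, so the carriers are the triangle vertices on $\ell$) and the fact from \cite[Lemma 2.4]{BJ-ext1} that the paper itself invokes in the proofs of Theorems~\ref{qeven-spec-con} and~\ref{conictonrc}; the incidental remark that these are the points where $\ell$ meets the extended conic $\{(1,t,t^2)\}$ is also correct, since the restriction of $\ell$ to that conic factors as $-(t-\tau)(t-\tau^q)$. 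The final multiplications $KR_1\sim(0,1,0)=\car_2$ and $KR_2\sim(1,0,0)=\car_1$ are correct, and the transport of carriers under $\sigma$ is justified because $\sigma$ conjugates $\PGL(3,q^3)_{\pi_0,\ell}$ to $\PGL(3,q^3)_{\Bpi,\li}$, so fixed points map to fixed points. The one subtlety you flagged yourself --- keeping the point/line duality straight --- is handled correctly throughout.
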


We will also need the following result about exterior \orsps\ which have  a common exterior splash and a common \orsl. 

\begin{theorem}\Label{two-subplanes}{\rm\cite{BJ-ext1}}
In $\PG(2,q^3)$, let $\ES$ be an exterior splash of $\li$, let $\ell$ be a line through a point of $\ES$, and let  $b$ an \orsl\ of $\ell$ exterior to $\li$. Then there are exactly two \orsps\ that contain $b$ and have exterior splash $\ES$.
\end{theorem}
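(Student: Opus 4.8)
The plan is to reduce to a standard configuration and then translate the statement into a counting problem about covers of $\ES$ by \orsls.

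First, by Theorem~\ref{transsplashes} the subgroup of $\PGL(3,q^3)$ fixing $\li$ is transitive on the exterior splashes of $\li$, so I may assume $\ES$ is the coordinatised splash of Theorem~\ref{thm:Bpi}, with carriers $\car_1,\car_2$. Writing $S=\ell\cap\li$, we have $S\in\ES$, and since $b$ is exterior to $\li$ the subline $b$ consists of $q+1$ affine points $P_0,\dots,P_q$ of $\ell$ with $S\notin b$. The stabiliser of $\ES$ in this group contains the map $(x,y,z)\mapsto(\tau^{q-1}x,y,z)$, which fixes $\li$ and acts transitively on the points of $\ES$ (it permutes the norm-$1$ elements cyclically); using it I would normalise $S$, and then place $\ell$ and $b$ in a convenient position. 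This leaves only a small, explicitly computable residual group acting, under which the two claimed subplanes will be interchanged.

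Next I set up the key correspondence. If $\pi$ is an \orsp\ with $b\subseteq\pi$ and exterior splash $\ES$, then $\ell$ is an extended line of $\pi$, and for each $P_j\in b$ the pencil of lines of $\pi$ through $P_j$ meets $\li$ in an \orsl\ $m_j\subseteq\ES$ through $S$. Since every extended line of $\pi$ other than $\ell$ meets $b$ in exactly one point, the sublines $m_0,\dots,m_q$ pairwise meet only in $S$ and partition $\ES\setminus\{S\}$ into $q+1$ classes of size $q$; conversely the affine points of $\pi$ are exactly the ``grid'' $\{\,P_iS'\cap P_jS''\,\}$ cut out by two of these pencils. Thus an \orsp\ through $b$ with splash $\ES$ is equivalent to a \emph{labelled cover} of $\ES$ by $q+1$ \orsls\ through $S$ (labelled by the points of $b$) subject to the closure condition that the resulting grid is genuinely a subplane. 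Counting such covers is the heart of the matter.

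To produce the two subplanes I would exhibit them explicitly. One comes from the coordinatised \orsp\ $\Bpi$ of Theorem~\ref{thm:Bpi}, moved by the normalising group so that $b$ becomes one of its lines; the second is its image under the involution of the stabiliser of $\li$ that fixes $\ES$ and fixes $b$ but swaps the two carriers $\car_1,\car_2$ (equivalently, interchanges the two transversal lines of $\S$). This carrier-swapping symmetry is precisely what makes the answer \emph{two} rather than one, and in the coordinates of Theorem~\ref{thm:Bpi} it is realised by a map such as $(x,y,z)\mapsto(y,x,z)$, which sends the splash parameter $k\mapsto k^{-1}$.

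The remaining, and main, obstacle is the ``at most two'' direction: I must show the global requirement that the splash equal $\ES$ admits no further solutions. I would fix the pencil subline $m_0$ at $P_0$ and show that the requirement that the complementary pencils $m_1,\dots,m_q$ again be \orsls\ \emph{contained in} $\ES$ forces a closure identity that, after writing everything in terms of the primitive element $\tau$, reduces to a single quadratic condition over $\GF(q)$ whose two roots correspond exactly to the two subplanes already constructed. An equivalent and perhaps cleaner route is to prove the local statement that through two fixed points of $\ES$ there lie exactly two \orsls\ contained in $\ES$, and then check that each such subline is compatible with the labelling of the cover by the points of $b$ and extends to a valid subplane. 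Establishing this ``exactly two sublines'' fact, together with its compatibility with the prescribed $b$, is where the genuine work lies; everything else is bookkeeping with the coordinatisation of Theorem~\ref{thm:Bpi}.
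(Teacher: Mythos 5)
This theorem is quoted in the paper from \cite{BJ-ext1} without proof, so there is no in-paper argument to compare against; judged on its own terms, your proposal is a programme rather than a proof, and the decisive step is missing. Your setup is sound: since any two lines of an exterior \orsp\ $\pi$ meet in an affine point of $\pi$, the extended lines of $\pi$ meet $\li$ in pairwise distinct points, so lines of $\pi$ biject with points of $\ES$; the pencil at each $P_j\in b$ then cuts out an \orsl\ $m_j$ of $\ES$ through $S=\ell\cap\li$, and the sets $m_j\setminus\{S\}$ partition $\ES\setminus\{S\}$. But the ``at most two'' direction --- which is the entire content of the theorem --- is deferred in both of your suggested routes. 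You assert the closure condition ``reduces to a single quadratic condition over $\GF(q)$'' without performing any of the reduction, and your alternative claim, that through two fixed points of $\ES$ there pass exactly two \orsls\ contained in $\ES$ (one from each family), is itself left unproved; even granting it, you would still have to show that the pointwise choices of sublines are forced to cohere into at most two global labelled covers compatible with the fixed $b$. None of that is routine bookkeeping; it is where the theorem lives.

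A second, smaller gap: your normalisation overreaches what Theorem~\ref{transsplashes} provides. That theorem gives transitivity on exterior splashes of $\li$; the cyclic map $(x,y,z)\mapsto(\tau^{q-1}x,y,z)$ then lets you fix $S$, but you additionally need the stabiliser of the pair $(\ES,S)$ to act transitively on the configurations $(\ell,b)$ --- there are $q^3$ affine lines $\ell$ through $S$ and many exterior \orsls\ $b\subset\ell$ --- and no such transitivity is established (nor is it obvious, since the relevant stabiliser is small). On the positive side, your existence construction matches the configuration the paper itself uses later: in the proof of Theorem~\ref{interchange-sl} the two subplanes are the images of $\pi_0$ under the homographies with matrices $K$ and $K'=HK$, where $H$ is exactly your carrier-swapping involution $(x,y,z)\mapsto(y,x,z)$; there the common \orsl\ $\{K(0,y,z)^t \st y,z\in{\mathbb F}_q\}$ consists of points whose first two coordinates are equal, so $H$ fixes it pointwise, and distinctness of the two subplanes is witnessed by the point $K(1,0,0)^t$. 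You would need to verify the analogous facts in your normalised position (that the involution actually fixes $b$, and that the two images are distinct) rather than assume them. As it stands, the proposal identifies the right structures but proves neither the uniqueness nor the reductions it leans on.
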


We now consider an exterior splash $\ES$ of $\li$ in $\PG(2,q^3)$, and look at it in the Bruck-Bose representation in $\PG(6,q)$. The set of $q^2+q+1$  points $\ES\subset\li$ corresponds to a set of $q^2+q+1$ planes (which we also denote by $\ES$) of the regular 2-spread $\S$ in $\si\cong\PG(5,q)$. As an exterior splash is equivalent to a cover of the circle geometry $CG(3,q)$, by \cite{bruc73b},  $\ES$ has two {\em switching sets} denoted $\EX,\EY$, each containing $q^2+q+1$ planes of $\si$. The three sets $\ES,\EX,\EY$ are called {\em hyper-reguli} in \cite{ostrom}: each set  covers 
the same set of points;  two planes in the same set are disjoint; while two planes from different sets meet in a unique point. Note that in \cite{BJ-short}, we show that the only planes which meet every plane of $\ES$ in a point are those in $\EX$ and $\EY$, so in particular, $\ES$ has exactly two covers.   In this article we call $\EX$ and $\EY$ {\em covers} of the exterior splash $\ES$. If $\pi$ is an exterior \orsp\ of $\PG(2,q^3)$ with exterior splash $\ES$, then the two covers have different properties with respect to $\pi$. One cover is denoted $\ET$ and is called the {\em tangent cover} of $\pi$ (or the tangent cover of $\ES$ with respect to $\pi$). The name follows from \cite[Theorem 5.3]{BJ-ext2} which shows that the planes of $\ET$ are related to the tangent planes of $[\pi]$. The other cover is denoted $\EC$ and is called the {\em conic cover} of $\pi$, it is related to certain conics of $\pi$, see Theorem~\ref{conictonrc}. In  Section~\ref{sec:sconic-stc} we discuss some of the fundamental differences in the geometry of the two covers in relation to the \orsp\ $\pi$.

In \cite{BJ-ext2}, we show that in the cubic extension $\PG(5,q^3)$ of $\si\cong\PG(5,q)$, each set $\ES,\ET,\EC$ has a unique triple of conjugate {\em transversal lines}. That is, the lines $\gs,\gs^q,\gs^{q^2}$ of $\PG(5,q^3)\setminus\PG(5,q)$ meet every extended plane of $\ES$, and these are the only lines of $\PG(5,q^3)$ which meet every extended plane of $\ES$ (in fact, these are the three transversal lines of the regular 2-spread $\S$). Similarly, there are exactly three lines 
 $\gt,\gt^q,\gt^{q^2}$ of $\PG(5,q^3)\setminus\PG(5,q)$ that meet every extended plane of the cover $\ET$; and exactly three lines   $\gc,\gc^q,\gc^{q^2}$ of $\PG(5,q^3)\setminus\PG(5,q)$ that meet every extended plane of the cover $\EC$.
  
We define the notion of $\EX$-special conics/twisted cubics in $\PG(6,q)$. In this article, this definition is used when $\EX$ is the regular 2-spread $\S$, or an exterior splash $\ES$, or one of the covers $\ET$ or $\EC$ of an exterior splash.

\begin{definition}\Label{def:S-special}
\begin{enumerate}
\item 
An \,{\em
   $\EX$-special conic}  is a non-degenerate conic $\C$ contained in a plane of \,$\EX$, such
that 
the extension of $\C$ to $\PG(6,q^3)$
meets the three transversal lines of 
 $\EX$. 
 \item An \,{\em $\EX$-special twisted cubic} is a
twisted cubic  $\N$ in a $3$-space of $\PG(6,q)\backslash\si$ about a plane of \,$\EX$, such that the extension of $\N$ to $\PG(6,q^3)$ meets the three transversal lines of $\EX$. 
\end{enumerate}
\end{definition}

Note that an $\EX$-special twisted cubic has no points in $\si$. 
Finally, we need the following result from \cite{barw12} which describes the representation of certain  \orsls\ of $\PG(2,q^3)$ in the Bruck-Bose representation in $\PG(6,q)$.

\begin{theorem}\Label{sublinesinBB}
\begin{enumerate}
\item Order-$q$-sublines contained in $\li$ in $\PG(2,q^3)$ correspond exactly to $2$-reguli of $\S$ in $\PG(6,q)$. 
\item Order-$q$-sublines exterior to $\li$ in $\PG(2,q^3)$ correspond exactly to
 $\S$-special twisted cubics in  $\PG(6,q)$.
\end{enumerate}
\end{theorem}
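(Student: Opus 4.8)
The plan is to handle the two parts separately, in each case reducing to a canonical \orsl\ and then computing directly in the coordinates of Section~\ref{BBintro}. After applying a suitable $\S$-preserving homography of $\PG(6,q)$ (equivalently, a collineation of $\PG(2,q^3)$ fixing $\li$), it suffices to treat one \orsl\ of each type. The two ingredients I expect to carry the argument are: (i) the diagonalisation over $\mathbb{F}_{q^3}$ of the companion matrix $M$ of multiplication by $\tau$, whose eigenvalues are $\tau,\tau^q,\tau^{q^2}$ and whose $\tau$-eigenline cuts out the transversal $g$ of $\S$; and (ii) the elementary fact that three distinct points of a line over $\mathbb{F}_{q^3}$ lie on a unique \orsl.

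For part (1), I would first show that the assignment sending $L=(\alpha,\beta,0)\in\li$ to the point $G_L=[L]^{*}\cap g$ in which the extended spread plane meets the transversal is a projectivity $\li\to g$. Writing $[L]$ as the $\mathbb{F}_q$-span of $([\alpha],[\beta],0),(M[\alpha],M[\beta],0),(M^2[\alpha],M^2[\beta],0)$ and passing to the eigenbasis of $M$, in which $g$ is the $\tau$-eigenline, the component of $[L]^{*}$ on $g$ is $(\alpha:\beta)$ up to a nonzero scalar; hence $L\mapsto G_L$ is the identity in natural coordinates and maps \orsls\ of $\li$ bijectively to \orsls\ of $g$. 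It then remains to identify \orsls\ of $g$ with 2-reguli of $\S$: a set of $q+1$ spread planes is a 2-regulus precisely when their transversal points on $g$ form an \orsl. The forward implication uses that a 2-regulus is determined by any three of its planes, together with (ii); the reverse implication checks the defining incidence of a 2-regulus (a line meeting three of the planes meets all $q+1$) for the planes $\langle G_s,G_s^{q},G_s^{q^2}\rangle\cap\si$ as $G_s$ runs over the \orsl, again in the eigenbasis.

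For part (2), let $b$ be an \orsl\ exterior to $\li$; it lies on a unique line $\ell$ meeting $\li$ in a single point $P_\infty\notin b$, and $\ell$ is represented by a $3$-space $\ell^{*}$ of $\PG(6,q)\takeaway\si$ about the plane $[P_\infty]\in\S$. Coordinatising $\ell$ so that its affine points are $(x,0,1)$, an \orsl\ avoiding the infinite point is the image of $\PG(1,q)$ under a fractional-linear map, so its points are $(x(u),0,1)$ with $x(u)=(Au+B)/(Cu+D)$, $A,B,C,D\in\mathbb{F}_{q^3}$, $u\in\mathbb{F}_{q}\cup\{\infty\}$ and $D/C\notin\mathbb{F}_{q}$ (this last condition is exactly exteriority). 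Applying $\epsilon$ gives $([x(u)],0,0,0,1)$, and rationalising by multiplying through by $(C^{q}u+D^{q})(C^{q^2}u+D^{q^2})$ turns each homogeneous coordinate into a polynomial of degree at most $3$ in $u$, the denominator becoming the norm $N(Cu+D)\in\mathbb{F}_{q}[u]$. Since $[\,\cdot\,]$ is $\mathbb{F}_q$-linear, the $q+1$ image points take the form $([n_3]u^3+[n_2]u^2+[n_1]u+[n_0],\,N(Cu+D))$ with $n_i\in\mathbb{F}_{q^3}$, i.e. the $\mathbb{F}_q$-rational points of a twisted cubic in $\ell^{*}$. Extending $u$ to $\mathbb{F}_{q^3}$, the three parameters $u=-D/C,\,-D^{q}/C^{q},\,-D^{q^2}/C^{q^2}$ are the conjugate zeros of $N(Cu+D)$; at these values the $z$-coordinate vanishes, so the curve point lies in the extended plane $[P_\infty]^{*}$, and one checks it is the intersection $g\cap[P_\infty]^{*}$ (respectively $g^{q}\cap[P_\infty]^{*}$, $g^{q^2}\cap[P_\infty]^{*}$). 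Hence the twisted cubic is $\S$-special.

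The remaining work is the two converses together with the nondegeneracy checks, and I expect the nondegeneracy to be the main obstacle. For part (2) I must verify that the four degree-$3$ polynomials above are linearly independent, so that the image is a genuine (nondegenerate) twisted cubic rather than a conic or line; this is exactly where exteriority $D/C\notin\mathbb{F}_q$ (equivalently, distinctness of the three zeros of $N(Cu+D)$) is used, and it reduces to a determinant/resultant computation. The converses then follow by reversing the construction: an $\S$-special twisted cubic $\N$ lies in a $3$-space $\ell^{*}$ about a plane $[P_\infty]\in\S$ and, by the remark following Definition~\ref{def:S-special}, has all $q+1$ of its points off $\si$; applying $\epsilon^{-1}$ places them on the affine line $\ell=\epsilon^{-1}(\ell^{*})$, and the three conjugate points in which $\N$ meets $g,g^{q},g^{q^2}$ furnish the fractional-linear map exhibiting the $q+1$ preimages as an \orsl\ exterior to $\li$. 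A count of the two families, or injectivity of the forward maps, then upgrades both correspondences to the claimed bijections.
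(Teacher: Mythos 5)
A point of order first: the paper does not prove this theorem at all --- it is imported as background from \cite{barw12} (see the sentence introducing it in Section~\ref{sec:intro-ext}) --- so there is no internal proof to measure your attempt against, and I can only assess it on its own terms. On those terms your outline is sound, and it is essentially the standard coordinate argument. In part (1), the eigenbasis computation is correct: in the basis diagonalising multiplication by $\tau$, the image $[x]$ has components $(x,x^q,x^{q^2})$, so $[L]^*\cap g$ is indeed the point $(\alpha:\beta)$ of $g$ and $L\mapsto G_L$ is a projectivity. For the subline-to-regulus direction you can make the incidence check concrete: for the canonical subline $\{(s:1)\st s\in{\mathbb F}_{q}\}\cup\{(1:0)\}$ the lines $\{(a[x],b[x],0)\st (a:b)\in\PG(1,q)\}$, $x\in{\mathbb F}_{q^3}'$, are ruling lines, using only the ${\mathbb F}_{q}$-linearity of $[\,\cdot\,]$; the converse direction needs, as you indicate, uniqueness of the $2$-regulus through three mutually disjoint planes, which is standard.

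In part (2), two small repairs. First, exteriority is $C\neq 0$ \emph{together with} $D/C\notin{\mathbb F}_{q}$; if $C=0$ the subline is tangent to $\li$ at $(1,0,0)$. Second, the nondegeneracy you defer does go through cleanly and deserves to be spelled out, since it is the crux: any ${\mathbb F}_{q}$-dependence $\lambda_0p_0+\lambda_1p_1+\lambda_2p_2+\mu N=0$ among your four coordinate polynomials can be rewritten as $\mathrm{Tr}(\gamma P(u))+\mu N(u)=0$ for some $\gamma\in{\mathbb F}_{q^3}$, where $P(u)=(Au+B)(C^{q}u+D^{q})(C^{q^2}u+D^{q^2})$; evaluating at $u=-D/C$ annihilates $N$ and the conjugate polynomials $P^{(q)},P^{(q^2)}$ (coefficients raised to the $q$th, $q^2$th powers) but not $P$, precisely because $AD-BC\neq0$ and $D/C\notin{\mathbb F}_{q}$, forcing $\gamma=0$ and then $\mu=0$. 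Your counting fallback for the converses is also legitimate: the number of \orsls\ of a fixed line $\ell$ avoiding $\ell\cap\li$ is $q^3(q^3-1)$, which matches the count of $\S$-special twisted cubics in the $3$-space $[\ell]$ given by \cite[Lemma 2.5]{BJ-iff} --- the same count this paper itself invokes in Theorem~\ref{two-specials-conv} --- and injectivity is immediate since the Bruck--Bose map is a bijection on affine points. The only genuinely under-specified sentence is the claim that the three transversal points ``furnish the fractional-linear map'' in the direct converse; as written that is not yet an argument, but since the counting route replaces it, the proposal as a whole has no gap.
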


\section{Special conics of an exterior subplane}\Label{sec:sconic-stc}

The main result of this section is to show that a certain  type of conic in an exterior \orsp\ in $\PG(2,q^3)$ corresponds exactly to $\EC$-special twisted cubics in $\PG(6,q)$. We first introduce the idea  in $\PG(2,q^3)$ of a special conic in an exterior \orsp.

\subsection{Special conics}

We are interested in a particular class of  conics in an exterior \orsp\ in $\PG(2,q^3)$ which have a nice geometric representation in $\PG(6,q)$. We define these special conics as follows. Note first that we can generalise the notion of an exterior splash on $\li$ to define the exterior splash $\ES$ of an \orsp\ $\pi$ onto any exterior line $\ell$:  $\ES$ is  the set of $q^2+q+1$ points on $\ell$ that lie on an extended line of $\pi$. 
An important collineation group  acting on $\PG(2,q^3)$ is $I=\PGL(3,q^3)_{\pi,\ell}$ which 
fixes an \orsp\ $\pi$, and a line $\ell$ exterior to $\pi$. 
By \cite[Theorem 2.2]{BJ-ext1}, $I$ fixes exactly three lines, namely $\ell$ and its conjugates $m$, $n$ with respect to $\pi$. Further   
$I$  fixes exactly three points: $\car_1=\ell\cap m$, 
$\car_2=\ell\cap n$, 
$\car_3=m \cap n$, which are conjugate with respect to $\pi$.
Note that if $\pi_0=\PG(2,q)$, then  $I=\PGL(3,q^3)_{{\pi_0},\ell}$ fixes the three exterior lines $\ell,\ell^q,\ell^{q^2}$ and the three points $E=\ell\cap\ell^{q^2}$, $E^q$, $E^{q^2}$.

The group $I=\PGL(3,q^3)_{\pi,\ell}$ identifies  two special points $\car_1=\ell\cap m$, 
$\car_2=\ell\cap n$ on $\ell$ which are called the {\em carriers} of  the exterior splash $\ES$ of $\pi$ onto $\ell$. This is consistent with the definition of carriers of a circle geometry $CG(3,q)$, see \cite[Theorem 4.2]{BJ-ext1}. The fixed points and fixed lines of $I$ are used to define an important class of conics in  $\pi$.

 \begin{definition}
 A $(\pi,\ell)$-special conic of an \orsp\ $\pi$ (with exterior line $\ell$) is a conic of $\pi$ whose extension to $\PG(2,q^3)$ contains the three fixed points $E_1,E_2,E_3$ of $I=\PGL(3,q^3)_{\pi,\ell}$.
\end{definition}

It is straightforward to show that a $(\pi,\ell)$-special conic is irreducible. 
We note that the incidence structure with {\sl points} the points of an exterior \orsp\  $\pi$, {\sl lines} the $(\pi,\li)$-special conics of $\pi$, and natural incidence is isomorphic to $\PG(2,q)$. 
This observation is useful when counting special conics in this article.  We also note that the set of $q^2+q+1$ $(\pi,\li)$-special conics of $\pi$ form a circumscribed bundle of conics in the sense of \cite{BBEF}.

In the case when $q$ is even, the set of special conics in an exterior \orsp\  
 has some interesting properties relating to the nuclei.
We look at the structure of the set of $(\pi,\li)$-special conics through a fixed point $P$ in $\pi$.

\begin{theorem}  \Label{qeven-spec-con}
Let $\pi$ be an exterior \orsp\ of $\PG(2,q^3)$, $q$ even. 
Let $P$ be a point of $\pi$, let $\C_0,\ldots,\C_q$ be the $(\pi,\li)$-special conics of $\pi$ through $P$, and let $N_k$ be the nucleus of $\C_k$. Then the points $N_0,\ldots,N_q$ are distinct, and lie on a line $n_P$ not through $P$, called the {\sl nucleus line} of $P$. Further, every line of $\pi$ is the nucleus line for a distinct point  of $\pi$. 
\end{theorem}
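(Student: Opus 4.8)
The plan is to pass to coordinates in which $\pi$ is the canonical subplane $\PG(2,q)$ and to exploit the fact that, in characteristic $2$, the nucleus of a conic is a \emph{linear} function of its equation. By Theorem~\ref{transsplashes} the group $\PGL(3,q^3)_{\li}$ is transitive on exterior \orsps, and collineations preserve conics, nuclei, incidence and the fixed points $E_1,E_2,E_3$ of $I$; hence the whole statement is projectively invariant and I may assume $\pi=\PG(2,q)$ with $\GF(q)$-coordinates, its conics being the $\GF(q)$-conics. Writing a conic as $Q=a x_0^2+b x_1^2+c x_2^2+d\,x_1x_2+e\,x_0x_2+f\,x_0x_1$, a short computation with the alternating polar form $B(u,v)=Q(u+v)+Q(u)+Q(v)$ shows that the nucleus of $Q$ is the point $N=(d,e,f)$. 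Thus in the space $\PG(5,q)$ of conics, with coordinates $(a,b,c,d,e,f)$, the map sending a conic to its nucleus is the linear projection $\rho\colon(a,b,c,d,e,f)\mapsto(d,e,f)$ with centre the plane $V=\{(a,b,c,0,0,0)\}$ of repeated lines, and its target plane, with coordinates $(d,e,f)$, is naturally identified with $\pi$ itself.

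Next I would realise the special conics as a subplane of $\PG(5,q)$. The $(\pi,\li)$-special conics are exactly the nondegenerate $\GF(q)$-conics through $E_1,E_2,E_3$; since these three conjugate points impose independent conditions, the conics through them form a net, that is, a plane $\Pi_{sp}\cong\PG(2,q)$ in $\PG(5,q)$. Because there are exactly $q^2+q+1$ special conics and all of them are irreducible, every one of the $q^2+q+1$ points of $\Pi_{sp}$ is a nondegenerate conic; in particular $\Pi_{sp}$ contains no repeated line, so $\Pi_{sp}\cap V=\emptyset$. For a point $P$ of $\pi$, the special conics through $P$ satisfy the single linear condition $Q(P)=0$, which cuts $\Pi_{sp}$ in a line $\lambda_P$ (not all of $\Pi_{sp}$, as only $q+1$ of the $q^2+q+1$ special conics pass through $P$). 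The nuclei $N_0,\dots,N_q$ are then precisely the image $\rho(\lambda_P)$.

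From here the geometric claims follow from properties of the projection. Since $\lambda_P\subseteq\Pi_{sp}$ is disjoint from the centre $V$, the restriction $\rho|_{\lambda_P}$ is injective, so it carries the $q+1$ points of $\lambda_P$ to $q+1$ distinct points lying on the line $n_P:=\rho(\lambda_P)$ of $\pi$; thus $N_0,\dots,N_q$ are distinct and exhaust $n_P$. If $P$ lay on $n_P$ it would equal some $N_k$, i.e.\ $P$ would be the nucleus of a conic $\C_k$ passing through $P$, which is impossible; hence $n_P$ does not pass through $P$. For the final assertion, note that $P\mapsto\lambda_P$ is the duality of the $\PG(2,q)$ structure, hence a bijection from the points of $\pi$ to the lines of $\Pi_{sp}$, while $\rho|_{\Pi_{sp}}$ is a collineation $\Pi_{sp}\to\pi$ (a projection injective on $\Pi_{sp}$, since $\Pi_{sp}\cap V=\emptyset$, with source and target of equal dimension), hence maps lines of $\Pi_{sp}$ bijectively to lines of $\pi$. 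Composing, $P\mapsto n_P$ is a bijection from the points of $\pi$ onto the lines of $\pi$, which is exactly the statement that every line of $\pi$ is the nucleus line of a distinct point.

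I expect the only delicate point to be the claim $\Pi_{sp}\cap V=\emptyset$: everything reduces to knowing that no member of the net of conics through $E_1,E_2,E_3$ degenerates to a repeated line, equivalently that none of the fixed points lies on an extended $\GF(q)$-line of $\pi$. I would obtain this directly from the count of $q^2+q+1$ irreducible special conics recorded before the theorem, rather than re-deriving it from the geometry of $\li$.
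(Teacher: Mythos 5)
Your proof is correct, but it takes a genuinely different route from the paper's. The paper argues by explicit computation: after the same WLOG reduction it works in $\pi_0=\PG(2,q)$ with exterior line $\elltau$ and the point $P=(0,0,1)$, exhibits two special conics $\C_0\colon y^2-xz=0$ and $\C_\infty\colon -t_0x^2+yz-t_2xz-t_1xy=0$ through $P$, observes that the $q+1$ special conics through $P$ form the pencil $\C_k=\C_0+k\C_\infty$, and computes the nuclei $N_k=(k,1-kt_2,-kt_1)$ directly, from which distinctness and the line $n_P=[t_1,0,1]$ are read off; the final claim is then obtained by showing $n_P\neq n_Q$ for $P\neq Q$, via uniqueness of the special conic with a given nucleus and of the special conic through two points, plus counting. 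You instead linearise the whole problem in the $\PG(5,q)$ of conics: in even characteristic the nucleus map $(a,b,c,d,e,f)\mapsto(d,e,f)$ is a linear projection with centre the plane $V$ of repeated lines, the special conics form a plane $\Pi_{sp}$ (the net through $E_1,E_2,E_3$) disjoint from $V$, and all three assertions fall out of the facts that $\rho|_{\Pi_{sp}}$ is a collineation $\Pi_{sp}\to\pi$ and $P\mapsto\lambda_P$ is the point--line duality. Your route is more conceptual: it explains \emph{why} the nuclei are collinear, and it yields as a byproduct the bijection between special conics and their nuclei --- a fact the paper's proof invokes (``there is a unique $(\pi_0,\ell)$-special conic with nucleus $N$'') without justification --- whereas the paper's computation produces explicit formulas that get reused (the same pencil and the parameter $k$ reappear in the proof of Theorem~\ref{tgtpleqconictgtpl}). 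Three small points to tighten: (i) your appeal to Theorem~\ref{transsplashes} should map the pair $(\pi,\li)$ to $(\PG(2,q),\ell)$ for some line $\ell$ exterior to $\PG(2,q)$ --- you cannot keep $\li$ itself, since $z=0$ meets $\PG(2,q)$ --- but as your argument never uses a particular exterior line, this is cosmetic; (ii) the independence of the three conditions deserves a line: over $\GF(q^3)$ the non-collinear points $E_1,E_2,E_3$ impose independent conditions, and since a $\GF(q)$-conic through $E_1$ automatically contains $E_1^q$ and $E_1^{q^2}$, the $\GF(q)$-solution space has vector dimension exactly $3$; after that, your count argument does give $\Pi_{sp}\cap V=\emptyset$; (iii) the step ``$P=N_k$ is impossible'' rests on the classical fact that the nucleus of an irreducible conic in even characteristic lies off the conic (conic plus nucleus is a hyperoval), which is worth stating explicitly, since it is exactly where your argument replaces the paper's coordinate check that $P\notin[t_1,0,1]$.
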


\begin{proof}
By Theorem~\ref{transsplashes} and \cite[Theorem 2.2]{BJ-ext1}, we can without loss of generality prove this for the \orsp\ $\pi_0=\PG(2,q)$ exterior to the line $\elltau$, and the point $P=(0,0,1)\in\pi_0$. The fixed lines of $I=\PGL(3,q^3)_{\pi_0,\ell}$ are $\ell,\ell^q,\ell^{q^2}$, and the fixed points of $I$ are $\car=\ell\cap\ell^{q^2}=(1,\tau,\tau^2)$, $\car^q=\ell^q\cap\ell$ and $E^{q^2}=\ell^{q^2}\cap\ell$. 
The conics $\C_0\colon y^2-xz=0$ and $\C_\infty \colon -t_0x^2+yz-t_2xz-t_1xy=0$ are 
conics of $\pi_0$ which contain the four points $P$, $\car$, $\car^q$ and $\car^{q^2}$\!\!, so they are 
 $(\pi_0,\ell)$-special conics containing $P$.  
Hence the conics in the pencil $\{\C_k=\C_0+k\C_\infty\st k\in{\mathbb F}_{q}\cup\{\infty\}\}$ are the $q+1$  $(\pi_0,\ell)$-special conics of $\pi_0$ containing $P$.
The nucleus of conic $\C_k$ is $N_k=(k,1-kt_2,-kt_1)$, $k\in{\mathbb F}_{q}\cup\{\infty\}$. So the nuclei are distinct for distinct $k$, and all lie on the line $n_P=[t_1,0,1]$, a line not through $P$. 

Let $P,Q$ be two points of $\pi_0$ with nucleus lines $n_P,n_Q$ respectively. We show that $n_P,n_Q$ are distinct.  Let $N\in n_P\cap n_Q$. There is a unique $(\pi_0,\ell)$-special conic $\C$ with nucleus $N$. As $N\in n_P$, $\C$ contains $P$, and as  $N\in n_Q$, $\C$ contains $Q$. So $\C$ is the unique $(\pi_0,\ell)$-special conic through $P$ and $Q$. Hence there can only be one point in the intersection $n_P\cap n_Q$, so the lines $n_P,n_Q$ are distinct. Hence each line of $\pi_0$ is the nucleus line for a distinct point of $\pi_0$. \end{proof}

A consequence of this theorem is that there is a bijection between 
 lines of an \orsp\ $\pi$ and $(\pi,\ell)$-special conics of $\pi$ as follows.
Firstly, there is a bijection mapping a point $P\in\pi$ to the unique $(\pi,\ell)$-special conic of $\pi$ with nucleus $P$. Then Theorem~\ref{qeven-spec-con} gives a bijection mapping a point $P\in\pi$ to its nucleus line $n_P$.

\subsection{Special conics are special twisted cubics}\Label{sec:sp-con}

Let $\ES$ be an exterior splash of $\PG(6,q)$ with covers $\EC$ and $\ET$, so  $\ES$ is  contained in the regular $2$-spread $\S$.
In Definition~\ref{def:S-special} we defined the notion of $\S$-, $\ES$-, $\EC$- and $\ET$-{\em special twisted cubics}. We consider the representation of each of these special twisted cubics in $\PG(2,q^3)$.

We can characterise  $\S$-special and $\ES$-special twisted cubics as follows. By Theorem~\ref{sublinesinBB}, a twisted cubic $\N$ of $\PG(6,q)$ corresponds to an exterior \orsl\ of $\PG(2,q^3)$ if and only if $\N$ is an $\S$-special twisted cubic. Hence if $\pi$ is an exterior \orsp\ of $\PG(2,q^3)$ with exterior splash $\ES$, then  the \orsls\ of $\pi$ are precisely the $\ES$-special twisted cubics in $\PG(6,q)$. 

In this section, we characterise $\EC$- and $\ET$-special twisted cubics, and show they correspond  precisely to 
  special conics of  some exterior \orsp\ of $\PG(2,q^3)$. 
 The argument  proving this proceeds as follows. In Theorem~\ref{conictonrc}, we show that a $(\pi,\li)$-special conic $\C$ in an exterior \orsp\ $\pi$ of $\PG(2,q^3)$ corresponds to a twisted cubic $[\C]$ in $\PG(6,q)$. Moreover, the 3-space containing $[\C]$ contains a unique plane of the cover $\EC$ (this is why we named $\EC$ the `conic cover').
 Theorem~\ref{two-specials} shows that the twisted cubic $[\C]$  is $\EC$-special. Theorem~\ref{two-specials-conv} proves the converse, that each $\EC$-special twisted cubic of $\PG(6,q)$  corresponds to a $(\pi',\li)$-special conic in some exterior \orsp\ $\pi'$ of $\PG(2,q^3)$. Note also that Corollary~\ref{6dim-nrc} shows that a non-special conic of $\pi$ corresponds to a 6-dimensional normal rational curve of $\PG(6,q)$. Finally, Corollary~\ref{cor:t-special-tc} shows that every
$\ET$-special twisted cubic  meets $\pi$ in at most three points, but corresponds to a special conic in some other exterior \orsp\ of $\PG(2,q^3)$. 

\begin{theorem}\Label{conictonrc}
Let $\C$ be a  $(\pi,\li)$-special conic in an exterior \orsp\ $\pi$ of $\PG(2,\r^3)$. Then in $\PG(6,q)$, $\C$ corresponds to a twisted cubic $[\C]$, and the $3$-space containing $[\C]$ meets $\si$ in a plane of the conic cover $\EC$ of $\pi$. 
\end{theorem}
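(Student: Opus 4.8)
The plan is to reduce to the canonical subplane $\Bpi$ of Theorem~\ref{thm:Bpi} using the transitivity in Theorem~\ref{transsplashes}, then compute directly. Since a $(\Bpi,\li)$-special conic $\C$ lies in $\Bpi\cong\PG(2,q)$, it is parametrised by a single parameter $t$ ranging over ${\mathbb F}_{q}\cup\{\infty\}$, giving $q+1$ affine points together with points on $\li$. Applying the Bruck-Bose map $\epsilon$ to the affine points, each coordinate $\alpha=a_0+a_1\tau+a_2\tau^2$ becomes a triple, so the image $[\C]$ in $\PG(6,q)$ is the image of a conic under a map whose coordinate functions are (at most) quadratic in $t$. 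I would verify that the resulting $q+1$ points span a $3$-space and lie on a twisted cubic: the cleanest route is to exhibit an explicit parametrisation $t\mapsto(f_0(t),\dots,f_6(t))$ whose entries are polynomials of degree $\le 3$ in a uniformising parameter, and check the points are in general position (no four coplanar), which forces the curve to be a twisted cubic rather than a conic or a line.

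The second, main assertion is that the $3$-space $\Pi=\langle[\C]\rangle$ meets $\si$ in a plane of the conic cover $\EC$. Here I would first identify the spread plane $[\C]\cap\si$, or rather $\Pi\cap\si$. Because $\C$ passes through the three fixed points $E_1,E_2,E_3$ of $I$ on $\li$, which are exactly the carriers/fixed points governing the splash, the extended conic meets $\li$ in a controlled way; the points of $\C$ on $\li$ feed into $\si$ under $\epsilon$ via the $z=0$ formula $\epsilon(\alpha,\beta,0)=([\alpha],[\beta],0)$, and I expect $\Pi\cap\si$ to be the unique spread plane generated by these infinite points. The key computation is to extend $\C$ to $\PG(2,q^3)$, hence $[\C]$ to $\PG(6,q^3)\supset\PG(5,q^3)$, and locate the intersection of the extended $3$-space with the three transversal lines $\gc,\gc^q,\gc^{q^2}$ of the conic cover $\EC$. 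Using the explicit transversal lines of $\EC$ from the coordinatisation in \cite{BJ-ext2}, I would show the extended curve $[\C]$ meets each of $\gc,\gc^q,\gc^{q^2}$, which simultaneously pins down $\Pi\cap\si$ as a plane of $\EC$ and, as a bonus, re-confirms that $[\C]$ is $\EC$-special (anticipating Theorem~\ref{two-specials}).

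The hard part will be the explicit identification of the transversal lines of $\EC$ and the verification that $[\C]$ meets them. The transversal lines of the spread $\S$ (equivalently of $\ES$) are given directly by the construction, but the covers $\EC,\ET$ have their own transversal triples in $\PG(5,q^3)$ whose coordinates must be extracted from \cite{BJ-ext2}; matching the conic cover (rather than the tangent cover) to $\C$ is the crux, and is precisely what justifies the name ``conic cover''. A subtlety worth flagging is orientation: there are two covers $\EC$ and $\ET$, and the computation must distinguish them, i.e. show $[\C]$ meets the transversals of $\EC$ and \emph{not} those of $\ET$. I would handle this by keeping the carriers $\car_1=(1,0,0)$, $\car_2=(0,1,0)$ and the splash $\ES=\{(k,1,0):k^{q^2+q+1}=1\}$ explicit throughout, so that the conjugacy action of $t\mapsto t^q$ on the transversal triple is transparent and the correct cover is singled out by which fixed points of $I$ the conic $\C$ absorbs.
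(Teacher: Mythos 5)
Your first half (reduce to $\Bpi$ via Theorem~\ref{transsplashes}, parametrise, count degrees) matches the paper's opening, but it glosses the one step where ``special'' actually does any work. A small slip first: a $(\Bpi,\li)$-special conic has all $q+1$ of its points affine --- the points on $\li$ (the carriers) belong only to its extension over ${\mathbb F}_{q^3}$. More importantly, after parametrising you cannot simply read off Bruck--Bose coordinates from quadratic coordinate functions: the third homogeneous coordinate is a quadratic $\gg_2(t)\in{\mathbb F}_{q^3}[t]$, and to land in $\PG(6,q)$ you must rescale so that the last coordinate lies in ${\mathbb F}_{q}$, generically by $\gg_2(t)^{q^2+q}$, which drives the entries up to degree \emph{six}; this is exactly why a non-special conic yields a $6$-dimensional normal rational curve (Lemma~\ref{general-conic}). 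The degree-$\le 3$ parametrisation you promise exists only because, for a special conic, $\gg_2$ splits into conjugate linear factors (in the paper's canonical example, $(\theta-\tau)(\theta-\tau^q)$), so multiplying by the single conjugate factor $(\theta-\tau^{q^2})$ already makes the last coordinate the norm $f(\theta)=(\theta-\tau)^{q^2+q+1}\in{\mathbb F}_{q}[\theta]$ of degree $3$, with the other coordinates quadratic. Your proposal asserts the bound without locating this cancellation, which is the crux of the first assertion. (Note too that to compute with one explicit conic, as the paper does with $\sigma(\C')$ for $\C'\colon y^2=zx$, you need transitivity of $\PGL(3,q^3)_{\Bpi,\li}$ on the special conics, i.e.\ \cite[Theorem 6.1]{BJ-ext1}, on top of Theorem~\ref{transsplashes}.)

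For the cover plane there are two problems. Your side remark that $\Pi\cap\si$ should be ``the unique spread plane generated by these infinite points'' is wrong: the extended conic meets $\li$ in the \emph{two} carriers, which correspond to two distinct planes of $\S\setminus\ES$, whereas the theorem asserts $\Pi\cap\si$ is a plane of $\EC$, which is not a plane of $\S$ at all; moreover the points of the extended curve on $\si$ (over ${\mathbb F}_{q^3}$, at parameters $\tau,\tau^q,\tau^{q^2}$) are not images under $\epsilon$ of the carriers. More seriously, your operative inference --- ``$[\C]$ meets $\gc,\gc^q,\gc^{q^2}$, which pins down $\Pi\cap\si$ as a plane of $\EC$'' --- is invalid as stated: $\gc,\gc^q,\gc^{q^2}$ are also the transversals of the unique \emph{regular spread} $\S_\EC\supseteq\EC$, so meeting them only certifies $\Pi\cap\si\in\S_\EC$; each plane of $\EC$ meets $\gc$ in one point, so only $q^2+q+1$ of the $q^3+1$ points of $\gc$ lie in extended planes of $\EC$. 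You must additionally identify the intersection point as one lying in an extended plane of $\EC$ --- which is what the paper does, but only in the \emph{next} theorem (Theorem~\ref{two-specials}), by computing the point explicitly as $\gc\cap[C_1]^*$ via \cite[Theorem 6.3]{BJ-ext2}, where $[C_1]=\{([x],[x^{q^2}],0)\st x\in{\mathbb F}_{q^3}'\}$ is a known plane of $\EC$ by \cite[Lemma 5.1]{BJ-ext2}. The paper's proof of the present theorem avoids transversals altogether: it projects $[\C]$ from $P_\infty$ into $\si$, observes the projected points trace an irreducible conic inside the known cover plane $[C_1]$, and thereby concludes \emph{simultaneously} that $[\C]$ spans the $3$-space $\langle P_\infty,[C_1]\rangle$ (normality, which your ``no four coplanar'' check was meant to supply) and that this $3$-space meets $\si$ in a plane of $\EC$. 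Your route can be repaired by matching the computed point of $\gc$ with a point of an extended $\EC$-plane; that same computation then settles your (correctly flagged) worry about distinguishing $\EC$ from $\ET$.
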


\begin{proof}
By Theorem~\ref{transsplashes}, we can without loss of generality prove this for the \orsp\ $\Bpi$ coordinatised in Theorem~\ref{thm:Bpi}. Further, by \cite[Theorem 6.1]{BJ-ext1}, we can
without loss of generality prove it for any $(\Bpi,\li)$-special conic in $\Bpi$.
In $\PG(2,q^3)$, consider the \orsp\ $\pi_0=\PG(2,q)$ with exterior line $\elltau$, and let $\C'$ be the conic in $\pi_0$ of equation $y^2=zx$. By \cite[Lemma 2.4]{BJ-ext1}, the exterior splash of $\pi_0$ onto $\ell$ has carriers $E=(1,\tau,\tau^2)$ and $E^q$. These both lie on the extension of $\C'$ to $\PG(2,q^3)$, as does $E^{q^2}$, hence  $\C'$ is a $(\pi_0,\ell)$-special conic of $\pi_0$.
By Theorem~\ref{thm:Bpi},  the homography $\sigma$ with matrix $K$ maps $\pi_0$, $\ell$  to $\Bpi$, $\li$ respectively. Further,  $\C=\sigma(\C')$ is a $(\Bpi,\li)$-special conic in $\Bpi$. 
The points of $\C'$ are $(1,\theta,\theta^2)$, $\theta\in{\mathbb F}_{q}\cup\{\infty\}$, so $\C$ has points $P_\theta=K(1,\theta,\theta^2)^t$, $\theta\in{\mathbb F}_{q}\cup\{\infty\}$. Now $P_\infty=(0, 0,  1)$, and for $\theta\in{\mathbb F}_{q}$, we have
\begin{eqnarray*}
P_\theta&=&\big(-\tau+\theta,\ -\tau^q+\theta,\  \tau\tau^q-\theta(\tau^q+\tau)+\theta^2\big)\\
&\equiv&\big(-(\theta-\tau)(\theta-\tau^{q^2}),\ -(\theta-\tau^q)(\theta-\tau^{q^2}),\  (\theta-\tau)(\theta-\tau^q)(\theta-\tau^{q^2})\big).\end{eqnarray*}
Note that the first two coordinates are polynomials in $\theta$ of degree 2, and the third coordinate is a polynomial in $\theta$ of degree 3. We
 write $P_\theta=(G(\theta)^q,G(\theta),f(\theta))$ where 
 $G(\theta)=-(\theta-\tau^q)(\theta-\tau^{q^2})$ is a polynomial over ${\mathbb F}_{q^3}$ and $f(\theta)=(\theta-\tau)^{q^2+q+1}$ is a polynomial over ${\mathbb F}_{q}$.
For $\theta\in{\mathbb F}_q$,  the third coordinate $f(\theta)$ is an element of ${\mathbb F}_{q}$, hence the corresponding point in $\PG(6,q)$ is  $P_\theta=([G(\theta)^{q}],[G(\theta)],f(\theta))$. 
As each coordinate of  $P_\theta$ is  a polynomial in $\theta$ of degree at most three, and the coordinates have no common factors over ${\mathbb F}_{q}$, the set of points $[\C]=\{P_\theta$: $\theta\in{\mathbb F}_{q}\}\cup\{P_\infty=(0, 0,  1)\}$  is a  rational curve of order three in $\PG(6,q)$.

We show that $[\C]$ is normal in a 3-space  by 
 considering the 
projection of $[\C]$ onto $\si
$ from the point $P_\infty$. 
Straightforward calculations show that the  point $Q_\theta=P_\infty P_\theta\cap\Sigma_\infty$ has coordinates $Q_\theta=([G(\theta)^q],\ [(G(\theta)^q)^{q^2}],\ 0)$.
By \cite[Lemma 5.1]{BJ-ext2}, one of  the planes  of the conic cover $\EC$ of $\Bpi$ is 
$[C_1]=\{([x],[x^{q^2}],0)\st x\in{\mathbb F}_{q^3}'\}$. So the point $Q_\theta$ lies in the cover plane $[C_1]$ for all $\theta\in{\mathbb F}_q$.  
We use the identities 
$\tau^q+\tau^{q^2}=t_2-\tau$ and $\tau^q\tau^{q^2}=\tau^2-t_2\tau-t_1$ to  simplify 
$G(\theta$) to $G(\theta)=(-\theta^2+t_2\theta+t_1)+(-\theta+t_2)\tau-\tau^2$.  
So $[G(\theta)]=\theta^2(-1,0,0)+\theta(t_2,-1,0)+(t_1,t_2,-1)$. Hence the points $Q_\theta$, $\theta\in{\mathbb F}_q$, lie on an irreducible conic in $[C_1]$. 
Hence the points of $[\C]$ span the $3$-space $\langle P_\infty,[C_1]\rangle$, and so  the conic $\C$ corresponds to a twisted cubic $[\C]$ in a $3$-space that  meets $\Sigma_\infty$ in a plane of $\EC$. 
\end{proof}

\begin{corollary}\Label{s-con-in-distinct}
Let $\pi$ be an exterior \orsp\ of $\PG(2,q^3)$ with conic cover $\EC$.  Distinct  $(\pi,\li)$-special conics in $\pi$ correspond to  twisted cubics lying in  $3$-spaces through  distinct planes of $\EC$.
\end{corollary}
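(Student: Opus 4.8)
The plan is to read Theorem~\ref{conictonrc} as producing a map $\Phi$ that sends each $(\pi,\li)$-special conic $\C$ of $\pi$ to the plane $\Phi(\C)=\langle[\C]\rangle\cap\si$ of the conic cover $\EC$ lying in the $3$-space spanned by the twisted cubic $[\C]$. The assertion of the corollary is exactly that $\Phi$ is injective. Since the $(\pi,\li)$-special conics form an incidence structure isomorphic to $\PG(2,q)$ there are exactly $q^2+q+1$ of them, and the cover $\EC$ also consists of exactly $q^2+q+1$ planes; hence proving $\Phi$ injective is equivalent to proving it bijective, and it suffices to establish either injectivity or surjectivity.

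A first reduction I would record is that the correspondence $\C\mapsto[\C]$ is itself injective: distinct conics of $\pi$ have distinct point sets, all of which are affine (as $\pi$ is exterior to $\li$), and the Bruck--Bose map is injective on affine points, so the associated twisted cubics are distinct. Consequently the only way $\Phi$ could fail to be injective is for two distinct special conics $\C_1\neq\C_2$ to yield twisted cubics $[\C_1],[\C_2]$ whose spanning $3$-spaces meet $\si$ in one and the same plane of $\EC$. Excluding this collapse is the \emph{crux} of the argument.

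To exclude it I would use the group $I=\PGL(3,q^3)_{\pi,\li}$, which by the transitivity result of \cite{BJ-ext1} already invoked in the proof of Theorem~\ref{conictonrc} acts transitively on the $(\pi,\li)$-special conics. Through the Bruck--Bose construction $I$ induces a collineation group $\hat I$ of $\PG(6,q)$ fixing $\si$, the regular spread $\S$ and $[\pi]$, hence fixing the exterior splash $\ES$ and its conic cover $\EC$ and permuting the $q^2+q+1$ planes of $\EC$; moreover $\Phi$ is $\hat I$-equivariant. Since the set of special conics is a single $\hat I$-orbit, the image of $\Phi$ is a single $\hat I$-orbit inside $\EC$. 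The remaining, and main, obstacle is to show that this orbit is all of $\EC$ --- equivalently that $\hat I$ acts transitively on the planes of $\EC$. I would establish this from the coordinatisation of $\Bpi$ in Theorem~\ref{thm:Bpi}, where the splash $\ES=\{(\tau^{(q-1)i},1,0)\colon 0\le i<q^2+q+1\}$ carries a cyclic subgroup of $I$ of order $q^2+q+1$ that permutes the cover planes in a single orbit. Transitivity on both sides makes the equivariant map $\Phi$ surjective, and with both the set of special conics and $\EC$ of size $q^2+q+1$ this forces $\Phi$ to be a bijection, in particular injective, which is the statement of the corollary.

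As an alternative route that avoids computing the full action of $\hat I$ on $\EC$, one can instead verify directly, in the explicit coordinates of the proof of Theorem~\ref{conictonrc}, that the displayed conic $\C$ is the unique $(\Bpi,\li)$-special conic whose $3$-space meets $\si$ in the plane $[C_1]$; by the transitivity of $I$ on special conics together with the equivariance of $\Phi$, all fibres of $\Phi$ then have equal size, so a single trivial fibre forces every fibre to be trivial, again giving injectivity. Either way, the principal difficulty is controlling the action on the planes of the cover rather than the passage from conics to twisted cubics, which is supplied by Theorem~\ref{conictonrc}.
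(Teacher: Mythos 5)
Your proposal is correct and follows essentially the same route as the paper: the paper's proof likewise uses the Singer cycle $I=\PGL(3,q^3)_{\Bpi,\li}$, citing \cite[Theorem 6.1]{BJ-ext1} for its regular action on the $(\Bpi,\li)$-special conics and \cite[Lemma 5.2]{BJ-ext2} for the induced regular action of $[I]$ on the $q^2+q+1$ cover planes of $\EC$, then concludes by exactly your counting argument (two sets of size $q^2+q+1$ with an equivariant map between transitive actions). The one step you sketch rather than prove---that the induced cyclic group permutes the cover planes of $\EC$ in a single orbit, which does not follow automatically for a cyclic group of order $q^2+q+1$ acting on $q^2+q+1$ objects---is precisely the content of the cited Lemma 5.2 of \cite{BJ-ext2}, so your argument matches the paper's modulo that citation.
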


\begin{proof} By \cite[Theorem 6.1]{BJ-ext1}, 
 the singer cycle $I=\PGL(3,q^3)_{\Bpi,\li}$ 
acts regularly on the $(\Bpi,\li)$-special conics of $\Bpi$. Further, by \cite[Lemma 5.2]{BJ-ext2}, in $\PG(6,q)$, $[I]$ is a singer cycle  that  acts regularly on the cover planes of $\EC$.
There are $q^2+q+1$ planes in $\EC$, and by \cite{BJ-ext1}, there are $q^2+q+1$ $(\pi,\li)$-special conics in  $\pi$.
Hence every $(\pi,\li)$-special conic corresponds to a twisted cubic that lies in a $3$-space about a distinct plane of the conic cover $\EC$. 
\end{proof}

We now show that the twisted cubics of $\PG(6,q)$ corresponding to $(\pi,\ell)$-special conics of $\pi$ in $\PG(2,q^3)$ are $\EC$-special twisted cubics. That is, in the cubic extension $\PG(6,q^3)$, they meet the transversals $g_\EC,g_\EC^q,g_\EC^{q^2}$ of the conic cover $\EC$ of $\pi$.

\begin{theorem}\Label{two-specials}
Let $\pi$ be an exterior \orsp\ of $\PG(2,q^3)$ with conic cover $\EC$.
A $(\pi,\li)$-special conic of $\pi$ corresponds in $\PG(6,q)$ to a $\EC$-special twisted cubic.
\end{theorem}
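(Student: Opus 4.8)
The plan is to pin down, in the cubic extension $\PG(6,q^3)$, the three points in which the extended twisted cubic $[\C]$ meets $\si$, and to show that these are exactly the points where the transversals of $\EC$ meet the relevant cover plane. By Theorem~\ref{transsplashes} and the reductions in the proof of Theorem~\ref{conictonrc}, I may take $\pi=\Bpi$ and $\C$ to be the $(\Bpi,\li)$-special conic used there, with points $P_\theta=([G(\theta)^q],[G(\theta)],f(\theta))$, where $G(\theta)=-(\theta-\tau^q)(\theta-\tau^{q^2})$ and $f(\theta)=(\theta-\tau)(\theta-\tau^q)(\theta-\tau^{q^2})$. In the $\PG(6,q)$ coordinates each coordinate of $P_\theta$ is a polynomial in $\theta$ with coefficients in ${\mathbb F}_q$, so $[\C]$ extends to $\PG(6,q^3)$ simply by allowing $\theta\in{\mathbb F}_{q^3}$ in these same polynomials.

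The key observation is that the last coordinate $f(\theta)$ vanishes exactly for $\theta\in\{\tau,\tau^q,\tau^{q^2}\}$ (while $P_\infty$ is off $\si$), so $[\C]$ meets $\si$ in precisely the three points $P_\tau,P_{\tau^q},P_{\tau^{q^2}}$. Since $[\C]$ lies in a $3$-space meeting $\si$ in the cover plane $[C_1]$ of $\EC$ (Theorem~\ref{conictonrc}), these three points all lie on $[C_1]$. As the defining polynomials have ${\mathbb F}_q$-coefficients, the Frobenius map $x\mapsto x^q$ cyclically permutes $P_\tau,P_{\tau^q},P_{\tau^{q^2}}$, so they form one conjugate triple; likewise $\gc,\gc^q,\gc^{q^2}$ is a conjugate triple, meeting $[C_1]$ in a conjugate triple of points. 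The problem therefore reduces to showing that $P_\tau$ lies on a single transversal of $\EC$: conjugation then yields the other two incidences and hence that $[\C]$ is $\EC$-special.

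To establish this incidence I would decompose points of $\si$ using the identity $[\alpha]\cdot(1,\tau^{q^j},\tau^{2q^j})^t=\alpha^{q^j}$ ($j=0,1,2$), which records how the Bruck--Bose coordinates encode the three conjugates of $\alpha$ and splits $\si$ into three conjugate one-dimensional components. Evaluating the two nonconstant blocks of $P_\theta$ against these three functionals, I expect that at $\theta=\tau$ the first block survives in only one component and the second block in only one other, and---crucially---with equal scalars (both equal to $-(\tau-\tau^q)(\tau-\tau^{q^2})$). Comparing with the description of $[C_1]$ and of the transversals of $\EC$ in \cite{BJ-ext2}, this identifies $P_\tau$ as precisely the point in which one transversal $\gc$ meets $[C_1]$.

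The step I expect to be the main obstacle is this last identification: checking that the two surviving components at $\theta=\tau$ carry equal scalars, and matching the resulting point against the explicit transversal of $\EC$ from \cite{BJ-ext2}. It is exactly the equality of scalars that forces $P_\tau$ onto a transversal rather than onto a generic point of $[C_1]$, and this is where the special (carrier-containing) nature of $\C$ is used; the remaining assertions follow either from Theorem~\ref{conictonrc} or formally by Frobenius-conjugacy.
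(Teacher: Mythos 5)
Your proposal is correct and takes essentially the same route as the paper: the paper's proof likewise continues from the parameterisation $P_\theta=([G(\theta)^q],[G(\theta)],f(\theta))$ of Theorem~\ref{conictonrc}, extends to $\theta\in{\mathbb F}_{q^3}$, locates $\N^*\cap\si$ at $P_\tau,P_{\tau^q},P_{\tau^{q^2}}$ via the zeros of $f$, verifies a single incidence with a transversal against the explicit point $A_1+\eta^{1-q}A_2^q$ of \cite[Theorem 6.3]{BJ-ext2}, and finishes by conjugation. The step you flag as the main obstacle is exactly what the paper settles, and your eigenfunctional computation is a repackaging of it: with $A=(g_0(\tau),g_1(\tau),g_2(\tau))$ one has $\eta=G(\tau)=-(\tau-\tau^q)(\tau-\tau^{q^2})$, so your ``equal scalars'' point is precisely the paper's $\eta^{q-1}A_1+A_2^q\equiv A_1+\eta^{1-q}A_2^q$ (the paper works at $\theta=\tau^q$ rather than $\theta=\tau$, using $g_i(\tau^q)=p_i^q$ and the identity $(A^q)^q=\eta^{q-1}A$ from \cite[Equation (8)]{BJ-ext2} in place of your functional decomposition).
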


\begin{proof} As in the proof of Theorem~\ref{conictonrc}, we can without loss of generality prove this for the \orsp\ $\Bpi$ coordinatised in Theorem~\ref{thm:Bpi}, and the $(\Bpi,\li)$-special conic $\C=\sigma(\C')$ in $\Bpi$.
We continue our calculations from the end of proof of Theorem~\ref{conictonrc},  using the same notation. 
That is, let  $G(\theta)=
-(\theta-\tau^q)(\theta-\tau^{q^2})$, $f(\theta)= (\theta-\tau)(\theta-\tau^q)(\theta-\tau^{q^2})$ and $P_\theta=([G(\theta)^{q}],[G(\theta)],f(\theta))$. Then the 
 set $[\C]=\{P_\theta\st\theta\in{\mathbb F}_{q}\cup\{\infty\}\}$ is a twisted cubic which we denote by $\N=[\C]$. 
We can uniquely write $G(\theta)=g_0(\theta)+g_1(\theta)\tau+g_2(\theta)\tau^2$ and 
$G(\theta)^{q}=h_0(\theta)+h_1(\theta)\tau+h_2(\theta)\tau^2$ where $g_i(\theta),h_i(\theta)$, $i=0,1,2$, are polynomials over ${\mathbb F}_{q}$. As  $f(\theta)$ is  a polynomial over ${\mathbb F}_{q}$,  the extension $\N^*$ of $\N$ to $\PG(6,q^3)$  has points $P_\theta= (h_0(\theta),h_1(\theta),h_2(\theta),\ g_0(\theta),g_1(\theta),g_2(\theta),\ f(\theta))$, for $\theta\in{\mathbb F}_{q^3}\cup\{\infty\}$.
As $f(\theta)$ has zeros $\tau,\tau^q,\tau^{q^2}\!,$\,   $\N^*$ meets $\Sigma_\infty$ in the points $P_\tau$, $P_{\tau^q}$ and $P_{\tau^{q^2}}$. 

By the proof of Theorem~\ref{conictonrc}, the 3-space containing $\N$ meets $\si$ in the cover plane  $[C_1]$ of $\EC$. 
We recall \cite[Theorem 6.3]{BJ-ext2} which shows that 
the transversal  line $\gc=\langle A_1, A_2^q\rangle$ of $\EC$ meets the (extended) cover plane $[C_1]$ in the point $A_1+\eta^{1-q}A_2^q$, where $A_1=(p_0,p_1,p_2,0,0,0,0),$ $A_2=(0,0,0,p_0,p_1,p_2,0)$, $p_0=t_1+t_2\tau-\tau^2$, $p_1=t_2-\tau$, $p_2=-1$ and $\eta=p_0+p_1\tau+p_2\tau^2$. We will  show that the  point  $P_{\tau^q}$ lies on   the transversal $\gc$. As in the proof of Theorem~\ref{conictonrc}, on simplifying, we have $G(\theta)=(-\theta^2+t_2\theta+t_1)+(-\theta+t_2)\tau-\tau^2$, so $g_0(\theta)=-\theta^2+t_2\theta+t_1$, $g_1(\theta)=-\theta+t_2$ and $g_2(\theta)=-1$.  
Letting $A=(p_0,p_1,p_2)$, and noting that 
 $g_0(\tau^q)=p_0^q$, $g_1(\tau^q)=p_1^q$, $g_2(\tau^q)=p_2^q$, we have $A_2^q=(0,0,0,g_0(\tau^q),g_1(\tau^q),g_2(\tau^q),0)=([0],\ A^q,\ 0)$.
Thus $P_\tau=([(A^q)^q],\ A^q,\ 0)$. It follows from \cite[Equation (8)]{BJ-ext2} that $(A^q)^q=\eta^{q-1} A$. Hence $P_\tau=(\eta^{q-1} A, A^q,0)=\eta^{q-1} A_1+ A_2^q\equiv A_1+\eta^{1-q} A_2^q$. Hence  $P_{\tau^q}=\gc\cap[C_1]^*=\gc\cap\N^*$. Similarly, $\N^*$ meets the transversals $\gc^q,\gc^{q^2}$, and so 
 $\N=[\C]$ is a $\EC$-special twisted cubic.
\end{proof}

The converse of this result also holds. That is, a $\EC$-special twisted cubic  corresponds  to a special conic in some exterior \orsp\ with conic cover $\EC$.

\begin{theorem}\Label{two-specials-conv}
Let $\ES$ be an exterior splash in $\PG(6,q)$, $q\geq 3$,  let $\EX$ be one cover of $\ES$, and let
 $[\N]$ be an
 $\EX$-special twisted cubic. Then in $\PG(2,q^3)$, there is a unique exterior \orsp\ $\pi$  that contains $\N$. Further, $\pi$ has exterior splash $\ES$, conic cover $\EX$, and $\N$ is a  $(\pi,\li)$-special conic  of $\pi$. 
\end{theorem}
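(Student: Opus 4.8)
The plan is to establish existence of $\pi$ first, then read off the splash and the cover intrinsically, and finally settle uniqueness, which turns out to be the easy part. By Theorem~\ref{transsplashes} I may assume $\ES$ is the standard splash of Theorem~\ref{thm:Bpi}, with carriers $\car_1=(1,0,0)$ and $\car_2=(0,1,0)$. Since an $\EX$-special twisted cubic has no point in $\si$ (the remark after Definition~\ref{def:S-special}), the curve $\N$ is a set of $q+1$ \emph{affine} points of $\PG(2,q^3)$, and the entire difficulty is to show that these $q+1$ points lie in a common exterior \orsp.

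For existence I would run the forward construction backwards by a counting argument. By Theorem~\ref{two-specials} and Corollary~\ref{s-con-in-distinct}, each exterior \orsp\ with conic cover $\EX$ produces $q^2+q+1$ distinct $\EX$-special twisted cubics, lying in $3$-spaces about distinct planes of $\EX$; and the uniqueness argument below shows that \orsps\ with different point sets produce disjoint families of such cubics. Hence the assignment sending a pair (exterior \orsp\ with conic cover $\EX$, one of its $(\pi,\li)$-special conics) to the corresponding $\EX$-special twisted cubic is injective, with image of size $(q^2+q+1)\,N$, where $N$ is the number of exterior \orsps\ with splash $\ES$ and conic cover $\EX$. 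I would then count the $\EX$-special twisted cubics independently --- for example, by counting, in the $3$-spaces about a fixed plane $\gamma\in\EX$, the twisted cubics whose extension to $\PG(6,q^3)$ passes through the three conjugate points in which the transversals of $\EX$ meet $\gamma^*$ --- and check that the two counts agree. Equality upgrades the injection to a bijection, so the given $[\N]$ is the image of a $(\pi,\li)$-special conic of some exterior \orsp\ $\pi$; in particular $\N\subset\pi$ and $\N$ is a special conic of $\pi$.

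The splash and cover are then determined intrinsically, confirming consistency. By Theorem~\ref{two-specials} applied to $\pi$, the cubic $[\N]$ is $\EC$-special for the conic cover $\EC$ of $\pi$; but a twisted cubic is a cubic curve and so meets $\si^*\cong\PG(5,q^3)$ in at most three points, and by hypothesis these already lie on the three transversals of $\EX$. Thus $\EX$ and the conic cover of $\pi$ share a transversal triple and hence coincide, $\EX=\EC$. Finally, since $\EX$ lies in the hyper-reguli triple $\{\ES,\EX,\EY\}$ and also, as the conic cover of $\pi$, in the triple $\{\ES',\EX,\ET\}$ determined by $\pi$ (where $\ES'$ is the splash of $\pi$ and $\ET$ its tangent cover), and since by \cite{BJ-short} $\EX$ has exactly two covers, we get $\{\ES,\EY\}=\{\ES',\ET\}$; as $\ES,\ES'$ are the unique members lying in the spread $\S$, it follows that $\ES'=\ES$. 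For uniqueness, observe that $\N$, being a conic of $\pi$, is an arc, so (as $q\ge 3$, whence $q+1\ge 4$) four of its points form a quadrangle, and a quadrangle of $\PG(2,q^3)$ is contained in a unique \orsp, since it fixes a coordinate frame and $\GF(q)$ is the unique subfield of order $q$. Hence $\pi$ is the only exterior \orsp\ containing $\N$.

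The main obstacle is existence: the forward results of Section~\ref{sec:sp-con} only give an injection of (\orsp, special conic) pairs into $\EX$-special twisted cubics, so closing the argument demands an independent enumeration of all $\EX$-special twisted cubics, to guarantee that none fails to arise from an \orsp. I expect this enumeration --- converting the condition of meeting the three cover transversals into a count of twisted cubics in the $3$-spaces about a plane of $\EX$ --- to be the crux. An appealing alternative, avoiding the count, would be to construct $\pi$ directly: show that the $q+1$ points of $\N$ form a subconic whose ambient conic of $\PG(2,q^3)$ passes through the carriers $\car_1,\car_2$ of $\ES$, and then build $\pi$ from this subconic together with the carrier data, in the spirit of Theorem~\ref{two-subplanes}, verifying directly that it is an exterior \orsp\ in which $\N$ is special.
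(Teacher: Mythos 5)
Your strategy is the same as the paper's --- a double count of pairs $(\C,\pi)$ against $\EX$-special twisted cubics, with uniqueness settled by the quadrangle argument --- but you have left the decisive step unexecuted. You explicitly defer the two enumerations (``I would then count \dots and check that the two counts agree'') and flag them as the crux, so as written there is a genuine gap: nothing in your proposal establishes that the counts match, and if they did not, the whole existence claim would collapse. The paper closes exactly this gap by quoting two external results: there are $2q^6(q^3-1)$ exterior \orsps\ with a fixed exterior splash $\ES$ (\cite[Theorem 4.5]{BJ-ext1}), giving $x=2q^6(q^3-1)(q^2+q+1)$ pairs, and there are $q^3(q^3-1)$ $\EX$-special twisted cubics in each of the $q^3$ $3$-spaces of $\PG(6,q)\setminus\si$ about each of the $2(q^2+q+1)$ cover planes (\cite[Lemma 2.5]{BJ-iff}), giving $y=2(q^2+q+1)q^6(q^3-1)=x$. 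Note also a structural point: your per-cover formulation needs $N$, the number of \orsps\ with splash $\ES$ whose conic cover is \emph{specifically} $\EX$, and you have no independent handle on how the $2q^6(q^3-1)$ subplanes split between the two covers. The paper sidesteps this by counting over both covers simultaneously (the factor $2$ appears on both sides), which is the cleaner way to set up your injection-to-bijection argument.

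Two smaller issues. First, your ``intrinsic'' identification $\EX=\EC$ is flawed as stated: the extended cubic meets $\si$ in three points lying on the transversals of $\EX$ and on those of $\EC$, but three common points do not force the two transversal \emph{triples of lines} to coincide. The correct and immediate argument is via the cover plane: the $3$-space of the cubic meets $\si$ in a single plane, which lies in a plane of $\EX$ by hypothesis and in a plane of the conic cover of $\pi$ by Theorem~\ref{conictonrc}; since distinct covers share no planes (planes from different covers meet in a point), the covers coincide. In fact, once your counting bijection is established for pairs with splash $\ES$, both the splash and cover identifications come for free, making your ``consistency'' paragraph (including the $\{\ES,\EY\}=\{\ES',\ET\}$ argument, which is valid but unnecessary) redundant. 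Second, your uniqueness argument via the quadrangle is exactly the paper's and is fine, as is the observation that $q\geq 3$ guarantees $q+1\geq 4$ points.
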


\begin{proof} Fix an exterior splash $\ES$. 
Let $\pi$ be any exterior \orsp\ of $\PG(2,q^3)$ with exterior splash $\ES$ and conic cover $\EC$, and let $\C$ be a $(\pi,\li)$-special conic in $\pi$. By Theorem~\ref{two-specials},   in $\PG(6,q)$, $\C$ corresponds to a $\EC$-special twisted cubic. We show that the converse is true by counting the two sets and showing they have the same size. 
Let $x$ be the number of pairs $(\C,\pi)$ where $\pi$ is an exterior \orsp\  of $\PG(2,q^3)$ with the given exterior splash $\ES$, and $\C$ is a $(\pi,\li)$-special conic of  $\pi$. 
In $\PG(6,q)$, let $\EX$ be a cover of the given exterior splash $\ES$, and let $y$ be the number  of $\EX$-special twisted cubics.
Note that for any set $\N$ of $q+1$ affine points of $\PG(2,q^3)$,  if $[\N]$ is an $\EX$-special twisted cubic of $\PG(6,q)$, then  $\N$  contains a quadrangle, so $\N$ lies in at most one exterior \orsp. Further, if $\N$ does lie in an \orsp\ with exterior splash $\ES$, then by Theorem~\ref{two-specials}, $\N$ {\em may} be  a $(\pi,\li)$-special conic of $\pi$. 
That is, $x\geq y$ with equality if and only if every $\EX$-special twisted cubic corresponds to a $(\pi,\li)$-special conic in an exterior \orsp\ $\pi$ with exterior splash $\ES$. 

We first count $x$. By \cite[Theorem 4.5]{BJ-ext1}, there are $2q^6(q^3-1)$ exterior \orsps\ with the fixed exterior splash $\ES$. Further, an exterior \orsp\  contains $q^2+q+1$ special conics.
 Hence $x=2q^6(q^3-1)(q^2+q+1)$.
Now we count $y$. Note that for each cover $\EX$ of $\ES$, an $\EX$-special twisted cubic  lies in a $3$-space  that meets $\si$ in a plane $\alpha$ of the cover $\EX$.
There are $2(q^2+q+1)$ choices for  a cover plane $\alpha$, $q^3$ $3$-spaces of $\PG(6,q)\setminus\si$ through $\alpha$, and by \cite[Lemma 2.5]{BJ-iff} there are $q^3(q^3-1)$ $\EX$-special twisted cubics in such a $3$-space. 
Hence $y=2(q^2+q+1)q^6(q^3-1)$. 
Thus $x=y$, hence each $\EX$-special twisted cubic $[\N]$ (in a 3-space that meets $\si$ in a cover plane of $\EX$) corresponds to a set of points $\N$ in $\PG(2,q^3)$ that lie in a unique exterior \orsp\ $\pi$ with exterior splash $\ES$. Further, $\N$ is  a $(\pi,\li$)-special conic in $\pi$, and so by Theorem~\ref{two-specials}, $\EX$ is the conic cover of $\pi$.
\end{proof}

We now consider a non-special conic of the exterior \orsp\ $\pi$, we will show that it corresponds to a 6-dimensional normal rational curve in $\PG(6,q)$.  First we show that all conics of $\pi$ correspond to 6- or 3-dimensional normal rational curves of $\PG(6,q)$.

\begin{lemma}\Label{general-conic}
Let $\C$ be an irreducible conic of an exterior \orsp\ $\pi$.  In $\PG(6,q)$, $[\C]$ is either  a $6$-dimensional normal rational curve  or a twisted cubic. 
\end{lemma}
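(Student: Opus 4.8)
The plan is to show that an irreducible conic $\C$ of $\pi$ maps under the Bruck-Bose map to a rational curve whose points have coordinates given by polynomials of degree at most three in a parameter, and then to determine the dimension of the span. First I would invoke Theorem~\ref{transsplashes} to reduce to the coordinatised \orsp\ $\Bpi=\sigma(\pi_0)$ of Theorem~\ref{thm:Bpi}, and parametrise $\C$ as the image under $\sigma$ of a conic $\C'$ of $\pi_0=\PG(2,q)$. A conic of $\PG(2,q)$ can be written with points $(\theta^2,\theta,1)$ (plus the point at infinity) after a suitable change of coordinates, or more generally as a homogeneous quadratic parametrisation $(a(\theta),b(\theta),c(\theta))$ with each $a,b,c$ a degree-$2$ polynomial over ${\mathbb F}_q$. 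Applying the matrix $K$ and then the Bruck-Bose map $\epsilon$, each of the three ${\mathbb F}_{q^3}$-coordinates becomes a degree-$2$ polynomial in $\theta$ over ${\mathbb F}_{q^3}$, which under the map $[\,\cdot\,]$ splits into three ${\mathbb F}_q$-coordinates, each itself a degree-$2$ polynomial in $\theta$ over ${\mathbb F}_q$. The one subtlety is the $z$-coordinate: I would check that the image point has seventh coordinate a polynomial in $\theta$ over ${\mathbb F}_q$ (as happened in the proof of Theorem~\ref{conictonrc}, where $f(\theta)$ was the norm $(\theta-\tau)^{q^2+q+1}$), so that after clearing denominators the point in $\PG(6,q)$ has all seven coordinates equal to polynomials in $\theta$ of degree at most a bounded value.

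Next I would argue about the degree. After normalising by the common factor of the coordinates (if any), the seven coordinate polynomials have degree at most $d$ for some $d\le 6$, and the image $[\C]$ is a rational curve of order $d$ in $\PG(6,q)$. The key step is to show that $d$ is either $6$ or $3$, and nothing else. For this I would use the structure of the Bruck-Bose coordinatisation: the $x$- and $y$-blocks of coordinates come from applying $\sigma$ and $\epsilon$ to degree-$2$ ${\mathbb F}_{q^3}$-polynomials, which generically give degree-$6$ behaviour once the Frobenius-twisted factors are multiplied out, unless a collapse forces the degree down to $3$. The collapse to degree $3$ is exactly what occurs in Theorem~\ref{conictonrc} for the special conics, where the numerator $G(\theta)$ has degree $2$ and the denominator $f(\theta)$ has degree $3$, yielding a twisted cubic. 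So I would examine when the full parametrisation has a common factor reducing the degree, and show the only possibilities are a genuine degree-$6$ normal rational curve spanning $\PG(6,q)$, or a degree-$3$ curve (twisted cubic) spanning a $3$-space.

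The main obstacle I expect is ruling out intermediate degrees (degrees $4$ and $5$) and confirming that the curve is \emph{normal} rather than a projection of lower-order behaviour. To handle this I would argue that $\C$, being an irreducible conic, contains at least five points in general position within $\pi$, hence $[\C]$ contains a corresponding configuration in $\PG(6,q)$ forcing a lower bound on the span; combined with the upper bound of $6$ from the degree count, this pins the curve to a normal rational curve in its span. The dichotomy $6$ versus $3$ should then follow from the fact that the parametrising polynomials either have no common factor (giving degree $6$) or share a linear factor in the precise Frobenius-symmetric way dictated by the $[\,\cdot\,]$ map (giving degree $3$, the twisted-cubic case). I would verify directly, by computing the general conic's parametrisation as in the proof of Theorem~\ref{conictonrc}, that the degree-$6$ case is the generic one and the degree-$3$ case coincides with $\C$ passing through the three fixed points of $I$, i.e.\ with $\C$ being special; the latter is already settled by Theorem~\ref{conictonrc}, so the lemma amounts to showing that every other irreducible conic lands in the degree-$6$ class.
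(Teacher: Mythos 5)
There is a genuine gap at the crucial step of your argument, namely where you exclude the intermediate orders $4$ and $5$ and establish normality. Your proposed mechanism --- that $\C$ contains five points in general position, forcing a lower bound on the span of $[\C]$ --- does not work: general position in $\PG(2,q^3)$ does not transfer to general position in $\PG(6,q)$ under the Bruck-Bose map (indeed it cannot, since in the twisted-cubic case the whole of $[\C]$ spans only a $3$-space), five points can force a span of dimension at most $4$ in any case, and no span bound of this kind can distinguish an order-$5$ curve normal in a $5$-space from an order-$6$ curve. Likewise your stated dichotomy --- ``no common factor (degree $6$) or a shared linear factor in a Frobenius-symmetric way (degree $3$)'' --- is wrong as phrased: a common \emph{linear} factor over ${\mathbb F}_{q}$ would drop the order from $6$ to $5$, which is precisely the case you must rule out, and you give no argument for why any common factor must consist of a full conjugate triple.

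The missing idea, which is how the paper closes exactly this gap, is to exploit the special shape of the seventh coordinate: writing the points of $\C$ as $F_\theta=(g_0(\theta),g_1(\theta),g_2(\theta))$ with $g_i$ of degree at most $2$ over ${\mathbb F}_{q^3}$, after clearing to ${\mathbb F}_{q}$ the last coordinate is the norm $g_2(\theta)^{q^2+q+1}$. One first shows $g_2$ has degree exactly $2$ (using the linear independence of $1,\tau+\tau^q,\tau\tau^q$ over ${\mathbb F}_{q}$ and $|A|\neq 0$) and has no root in ${\mathbb F}_{q}$ --- a root $k\in{\mathbb F}_{q}$ would place the point $F_k$ of $\C$ on $\li$, contradicting that $\pi$ is exterior to $\li$. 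Hence over ${\mathbb F}_{q}$ the norm factors as $r(\theta)s(\theta)$ with $r,s$ irreducible cubics (roots $\delta,\delta^q,\delta^{q^2}$ and $\eta,\eta^q,\eta^{q^2}$ with $\delta,\eta\in{\mathbb F}_{q^3}\setminus{\mathbb F}_{q}$), or is irreducible of larger degree; so any common divisor over ${\mathbb F}_{q}$ of the seven coordinate polynomials divides $rs$ and has degree $0$, $3$, or $6$, with $6$ impossible --- this is what kills orders $4$ and $5$. The degree-$3$ collapse occurs exactly when $\eta\in\{\delta^q,\delta^{q^2}\}$, i.e.\ $r=cs$, and then $F_\theta\equiv(g_0(\theta)(\theta-\delta^q),\,g_1(\theta)(\theta-\delta^q),\,cr(\theta))$ has degree-$3$ coordinates which cannot share a linear factor over ${\mathbb F}_{q}$ because $r$ is irreducible; this last point settles normality in the cubic case, which your proposal also leaves unaddressed. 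Note finally that the lemma itself only asserts the dichotomy; the identification of the twisted-cubic case with the $(\pi,\li)$-special conics, which you fold into your plan, is the content of Corollary~\ref{6dim-nrc} and rests additionally on the counting in Theorem~\ref{two-specials-conv}.
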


\begin{proof}
By Theorem~\ref{transsplashes}, we can without loss of generality prove this for the \orsp\ $\Bpi$ coordinatised in Theorem~\ref{thm:Bpi}. 
We first calculate the equation of a general irreducible conic $\C'$  in $\pi_0=\PG(2,q)$, then use the homography $\sigma$ from Theorem~\ref{thm:Bpi} to map it to a general irreducible conic $\C$ in $\Bpi$. Let $\C'$ be the image of the conic $\C''=\{
(1,\theta,\theta^2)\st\theta\in{\mathbb F}_{q}\cup\{\infty\}\}\subset\pi_0$ under a homography of $\PG(2,q)$  with matrix $A=(a_{ij})$.
The point $(1,\theta,\theta^2)\in\C''$
 is mapped  to the point $(f_0(\theta),f_1(\theta),f_2(\theta))\in\C'$, for $\theta\in{\mathbb F}_{q}\cup\{\infty\}$, where $f_0(\theta)=a_{00}+a_{01}\theta+a_{02}\theta^2$,
$f_1(\theta)=a_{10}+a_{11}\theta+a_{12}\theta^2$, $f_2(\theta)=a_{20}+a_{21}\theta+a_{22}\theta^2$,
 are polynomials over ${\mathbb F}_{q}$.
Under the homography $\sigma$ with matrix $K$ of Theorem~\ref{thm:Bpi}, this is mapped to a general irreducible conic $\C$ of $\Bpi$ with points 
\begin{eqnarray}
F_\theta=\big(-f_0(\theta)\tau+f_1(\theta),\ -f_0(\theta)\tau^q+f_1(\theta),\ f_0(\theta)\tau\tau^q-f_1(\theta)(\tau+\tau^q)+f_2(\theta)\big),\label{fdef}
\end{eqnarray}
$\theta\in{\mathbb F}_{q}\cup\{\infty\}$. We write this as $F_\theta=(\gg_0(\theta),\gg_1(\theta),\gg_2(\theta))$. 
Multiply all the coordinates of $F_\theta$ by $\gg_2(\theta)^{q^2+q}$, so $F_\theta\equiv(\gg_0(\theta)\gg_2(\theta)^{q^2+q},\gg_1(\theta)\gg_2(\theta)^{q^2+q},\gg_2(\theta)^{q^2+q+1})$, and now the last coordinate lies in ${\mathbb F}_{q}$. So in $\PG(6,q)$, 
$$[\C]=\big\{F_\theta\equiv\big([\gg_0(\theta)\gg_2(\theta)^{q^2+q}],\ [\gg_1(\theta)\gg_2(\theta)^{q^2+q}],\ \gg_2(\theta)^{q^2+q+1}\big)\st\theta\in{\mathbb F}_{q}\cup\{\infty\}\big\}.$$ 
Note that $\gg_0(\theta),\gg_1(\theta),\gg_2(\theta)$ 
have degree at most two in $\theta$, and so $\gg_2(\theta)^{q^2+q}=\gg_2(\theta)^{q^2}\gg_2(\theta)^q$ has degree at most four in $\theta$.  
Hence each coordinate of the points $F_\theta$ of $[\C]$ is a  polynomial in $\theta$ of degree at most six. Thus $[\C]$ is a rational curve in $\PG(6,q)$ of order at most 6.  We want to show that  $[\C]$ is normal in either 6-space, or in a $3$-space.

We begin by showing that $\gg_2(\theta)=f_0(\theta)\tau\tau^q-f_1(\theta)(\tau+\tau^q)+f_2(\theta)$ has degree two in $\theta$.
Expanding gives 
$\gg_2(\theta)=(a_{00}\tau\tau^q-a_{10}(\tau+\tau^q)+a_{20})+(a_{01}\tau\tau^q-a_{11}(\tau+\tau^q)+a_{21})\theta+(a_{02}\tau\tau^q-a_{12}(\tau+\tau^q)+a_{22})\theta^2.
$
If $\gg_2(\theta)$ is of degree less than two, then $a_{02}\tau\tau^q-a_{12}(\tau+\tau^q)+a_{22}=0$. Hence $a_{02}=a_{12}=a_{22}=0$ (since $1,\tau\tau^q,\tau+\tau^q$ are linearly independent over $\GF(q)$) contradicting $|A|\ne 0$.  Thus $\gg_2(\theta)$ is a polynomial of degree two in $\theta$ over ${\mathbb F}_{q^3}$.

Further, if
 $\gg_2(\theta)$ was reducible 
over ${\mathbb F}_{q}$, then let $k\in{\mathbb F}_{q}$ be a root.  Then the point $F_k$ lies in $\li$ and lies in the conic $\C$ of $\Bpi$,  contradicting $\Bpi$ being a subplane exterior to $\li$.
Hence $\gg_2(\theta)$ is irreducible over ${\mathbb F}_{q}$.

Suppose that $\gg_2(\theta)$ is irreducible over ${\mathbb F}_{q^3}$.
Then $g_0(\theta),g_1(\theta),g_2(\theta)$ have no common factors over ${\mathbb F}_{q}$. Hence the set of points $[\C]$ is a normal rational curve of order six in $\PG(6,q)$, which is the first possibility in the theorem.

Now suppose $\gg_2(\theta)$ is reducible over ${\mathbb F}_{q^3}$, so 
$\gg_2(\theta)$ has roots $\delta,\eta\in{\mathbb F}_{q^3}\setminus{\mathbb F}_{q}$. So $\gg_2(\theta)^{q^2+q+1}=r(\theta)\times s(\theta)$ where $r(\theta)$ is an irreducible cubic over ${\mathbb F}_{q}$, with roots $\delta, \delta^q,\delta^{q^2}$ over ${\mathbb F}_{q^3}$, and $s(\theta)$ is an irreducible cubic over ${\mathbb F}_{q}$ with roots $\eta,\eta^q,\eta^{q^2}$ in ${\mathbb F}_{q^3}$. 
If neither $r(\theta)$ nor $s(\theta)$ is a factor of both $\gg_0(\theta)\gg_2(\theta)^q\gg_2(\theta)^{q^2}$ and $\gg_1(\theta)\gg_2(\theta)^q\gg_2(\theta)^{q^2}$, then each coordinate has  degree six in $\theta$, and $\gg_0(\theta),\gg_1(\theta)$ and $\gg_2(\theta)$ have no common factors over ${\mathbb F}_{q}$, and hence the set of points $[\C]$ is a normal rational curve of order six in $\PG(6,q)$, again the first possibility in the statement of the theorem.

Otherwise suppose  that $r(\theta)$ say is a factor of both  $\gg_0(\theta)\gg_2(\theta)^q\gg_2(\theta)^{q^2}$ and $\gg_1(\theta)\gg_2(\theta)^q\gg_2(\theta)^{q^2}$.  As $\gg_0(\theta)$ is a polynomial in $\theta$ of degree one or two over ${\mathbb F}_{q}$, the roots of $\gg_0(\theta)$ lie in either ${\mathbb F}_{q}$ or $\GF(q^2)$, and so cannot be a root of $r(\theta)$ or $s(\theta)$, as their roots lie in ${\mathbb F}_{q^3}\backslash{\mathbb F}_{q}$.
So if $r(\theta)$ is an irreducible (over ${\mathbb F}_{q}$) factor of  $\gg_0(\theta)\gg_2(\theta)^q\gg_2(\theta)^{q^2}$, it follows that $r(\theta)$ divides $\gg_2(\theta)^q\gg_2(\theta)^{q^2}$, which has two factors (over ${\mathbb F}_{q^3}$) of $r(\theta)$, and two factors (over ${\mathbb F}_{q^3}$)  of $s(\theta)$. 
Hence either $\eta=\delta^q$ or $\delta^{q^2}$, 
and so $r(\theta)=cs(\theta)$ for some $c\in{\mathbb F}_{q^3}'$. 
Note that if $\eta=\delta^q$ we obtain $\gg_2(\theta)=c(\theta-\delta)(\theta-\delta^q)$, and if $\eta=\delta^{q^2}$ we obtain $\gg_2(\theta)=c(\theta-\delta)(\theta-\delta^{q^2})$. However, $(c(\theta-\delta)(\theta-\delta^{q^2}))^q=c^q(\theta-\delta)(\theta-\delta^q)$ so without loss of generality, we may assume that $\eta=\delta^q$. 
So in $\PG(2,q^3)$, we have $
F_\theta\equiv(\gg_0(\theta)(\theta-\delta^q)r(\theta),\ \gg_1(\theta)(\theta-\delta^q) r(\theta),\ cr(\theta)^2)
\equiv(\gg_0(\theta)(\theta-\delta^q),\ \gg_1(\theta)(\theta-\delta^q),\ cr(\theta)).$
Hence in $\PG(6,q)$, 
\begin{eqnarray}
[\C]=\big\{F_\theta=\big([\gg_0(\theta)(\theta-\delta^q)],\ [\gg_1(\theta)(\theta-\delta^q)],\ cr(\theta)\big)\st\theta\in{\mathbb F}_{q}\cup\{\infty\}\big\}\label{spec-final-eqn}.\end{eqnarray} 
That is, the rational curve of order six reduces to a rational curve of order three, so it lies in some $3$-space. If $[\C]$ is not normal in this $3$-space, then the coordinates of $F_\theta$ in 
(\ref{spec-final-eqn})
 have a common linear factor over ${\mathbb F}_{q}$, a contradiction as $r(\theta)$ is irreducible over  ${\mathbb F}_{q}$.  
Hence $[\C]$ is a normal rational curve in a $3$-space, that is, a twisted cubic. So we have the two possibilities in the statement of the theorem.
\end{proof}

As an immediate consequence of the preceding results, we have the following characterisation of the conics of an exterior \orsp\ of $\PG(2,q^3)$.

\begin{corollary}\Label{6dim-nrc} Let $\pi$ be an exterior \orsp\ of $\PG(2,q^3)$ with conic cover $\EC$, and let $\C$ be a conic of $\pi$. Then $\C$ is a $(\pi,\li)$-special conic of $\pi$  if and only if in $\PG(6,q)$, $[\C]$ is a $\EC$-special twisted cubic.
Otherwise, $[\C]$ is a 6-dimensional normal rational curve of $\PG(6,q)$. 
\end{corollary}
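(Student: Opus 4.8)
The plan is to read this off from the three immediately preceding statements, Theorem~\ref{two-specials}, Theorem~\ref{two-specials-conv} and Lemma~\ref{general-conic}, so the proof is essentially a repackaging. There are two things to establish: the biconditional that $\C$ is a $(\pi,\li)$-special conic if and only if $[\C]$ is an $\EC$-special twisted cubic, and the complementary statement that otherwise $[\C]$ is a $6$-dimensional normal rational curve. I would dispose of the biconditional first and then deduce the dichotomy from Lemma~\ref{general-conic}.

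For the biconditional, the forward implication is exactly Theorem~\ref{two-specials}. For the reverse implication, suppose $[\C]$ is an $\EC$-special twisted cubic, where $\EC$ is the conic cover of $\pi$. I would apply Theorem~\ref{two-specials-conv} with $\EX=\EC$ to the twisted cubic $\N=[\C]$: it yields a unique exterior \orsp\ $\pi'$ containing the point set $\N$, with exterior splash $\ES$ and conic cover $\EC$, in which $\N$ is a $(\pi',\li)$-special conic. The point is then a uniqueness one: the $q+1$ points of $\C$ lie in $\pi$ and contain a quadrangle, and an \orsp\ is determined by a quadrangle, so the \orsp\ containing $\N$ is unique and must be $\pi$ itself. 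Hence $\pi'=\pi$ and $\C$ is a $(\pi,\li)$-special conic, as required.

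For the dichotomy, suppose $\C$ is not a $(\pi,\li)$-special conic. By Lemma~\ref{general-conic}, $[\C]$ is either a $6$-dimensional normal rational curve or a twisted cubic, so it suffices to exclude the second possibility. I would argue by contradiction: assuming $[\C]$ is a twisted cubic, I would show that it is forced to be $\EC$-special, and then the biconditional just proved makes $\C$ a $(\pi,\li)$-special conic, contradicting the hypothesis. To see the twisted cubic is $\EC$-special, I would return to the twisted-cubic case of the proof of Lemma~\ref{general-conic}, where $[\C]$ is given explicitly by~(\ref{spec-final-eqn}) with $\gg_2(\theta)=c(\theta-\delta)(\theta-\delta^q)$ and third coordinate the irreducible cubic $cr(\theta)$, whose conjugate roots are $\delta,\delta^q,\delta^{q^2}$. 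Extending to $\PG(6,q^3)$, the curve meets $\si$ precisely in the three conjugate points $F_\delta,F_{\delta^q},F_{\delta^{q^2}}$; being $\EC$-special means exactly that these three conjugate intersection points lie on the transversals $\gc,\gc^q,\gc^{q^2}$ of $\EC$, which simultaneously identifies the base plane $\langle F_\delta,F_{\delta^q},F_{\delta^{q^2}}\rangle\cap\si$ as a plane of $\EC$. The verification thus reduces to checking that $F_{\delta^q}\in\gc$ (and its conjugates), replaying the transversal computation of Theorem~\ref{two-specials} with the generic $\delta$ in the role of $\tau$.

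The main obstacle is precisely this last verification: showing that an \emph{arbitrary} twisted-cubic image of a conic of $\pi$ is $\EC$-special, that is, that the fixed transversal $\gc$ of the conic cover of $\pi$ really does pass through the intersection point $F_{\delta^q}$ determined by the varying parameter $\delta$. An alternative route that sidesteps this transversal bookkeeping is a global count inside $\pi$: by \cite{BJ-ext1} there are exactly $q^2+q+1$ $(\pi,\li)$-special conics of $\pi$, and by Corollary~\ref{s-con-in-distinct} they correspond to $q^2+q+1$ distinct twisted cubics, all $\EC$-special by Theorem~\ref{two-specials}; if one shows that the number of conics of $\pi$ with twisted-cubic image is at most $q^2+q+1$ (equivalently, that the reducibility condition $\gg_2(\theta)=c(\theta-\delta)(\theta-\delta^q)$ isolating the twisted-cubic case in Lemma~\ref{general-conic} is satisfied by no more conics than this), then every twisted-cubic image is already accounted for by a special conic, and the dichotomy follows. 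Either route reduces the corollary to the explicit rational-curve computations already assembled in the proofs of Theorem~\ref{two-specials} and Lemma~\ref{general-conic}.
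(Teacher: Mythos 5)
Your handling of the biconditional is correct and is exactly the paper's route: the paper offers no separate argument for this corollary (it is announced as an immediate consequence of Theorems~\ref{two-specials}, \ref{two-specials-conv} and Lemma~\ref{general-conic}), and your decomposition --- forward direction by Theorem~\ref{two-specials}; reverse direction by applying Theorem~\ref{two-specials-conv} with $\EX=\EC$ and then using the quadrangle inside $\C$ to force $\pi'=\pi$ --- is the right repackaging (note the reverse direction inherits the hypothesis $q\geq 3$ from Theorem~\ref{two-specials-conv}).

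The genuine gap is the one you flag yourself and then leave open: the ``otherwise'' clause. Lemma~\ref{general-conic} only says $[\C]$ is a twisted cubic or a $6$-dimensional normal rational curve, so you must exclude a twisted-cubic image for a non-special conic, and neither of your two routes is carried out. Moreover, your first route is harder than you suggest as stated: the transversal computation in Theorem~\ref{two-specials} leans on data tied to the specific coordinatisation by $\tau$ (the cover plane $[C_1]$, the points $A_1,A_2$ and the identity $(A^q)^q=\eta^{q-1}A$ from \cite{BJ-ext2}), and for a hypothetical non-special twisted-cubic image you do not know a priori that its $3$-space meets $\si$ in a plane of $\EC$ at all (even meeting the transversals would only place the base plane in the regular spread $\S_\EC$ containing $\EC$, not in $\EC$ itself), so ``replaying with $\delta$ in the role of $\tau$'' is not a routine substitution. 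The step closes much more cheaply upstairs in $\PG(2,q^3)$, with no transversal bookkeeping: in the twisted-cubic case of Lemma~\ref{general-conic} one has $\gg_2(\theta)=c(\theta-\delta)(\theta-\delta^q)$ with $\delta\in{\mathbb F}_{q^3}\setminus{\mathbb F}_{q}$, and $\gg_2(\theta_0)=0$ exactly when the extended conic point of parameter $\theta_0$ lies on the exterior line $\ell$ of the $\pi_0$-model. Since $f_0,f_1,f_2$ have coefficients in ${\mathbb F}_{q}$, the Frobenius map sends the point of parameter $\delta$ (which lies on $\ell$) to the point of parameter $\delta^q$, which therefore lies on $\ell^q$; but $\delta^q$ is the other root of $\gg_2$, so that point also lies on $\ell$, hence equals the fixed point $E^q=\ell\cap\ell^q$ of $I=\PGL(3,q^3)_{\pi_0,\ell}$. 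Conjugating again puts all of $E,E^q,E^{q^2}$ on the extension of the conic, so the conic is $(\pi_0,\ell)$-special, and transporting by $\sigma$ makes $\C$ a $(\pi,\li)$-special conic --- contradicting the hypothesis and leaving only the $6$-dimensional case. In other words, one should first prove specialness and then quote Theorem~\ref{two-specials}, rather than try to verify $\EC$-specialness directly for generic $\delta$. Your counting alternative would also work in principle, but you give no mechanism for bounding the number of conics whose $\gg_2$ factors as $c(\theta-\delta)(\theta-\delta^q)$, and establishing that bound amounts to the same conjugation argument.
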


Finally, we characterise $\ET$-special conic of $\PG(6,q)$ as follows. Let $\pi$ be an exterior \orsp\ of $\PG(2,q^3)$ with tangent cover $\ET$, and let $[\N]$ be a $\ET$-special twisted cubic of $\PG(6,q)$. By Theorem~\ref{two-specials-conv}, there is a unique \orsp\ $\pi'$ of $\PG(2,q^3)$ that contains the points of $\N$, and further $\pi'$ has conic cover $\ET$. That is, $\N$ is a $(\pi',\li)$-special conic of $\pi'$.  Also, by  \cite[Theorem 7.2]{BJ-ext1}, $\pi$ and $\pi'$ meet in either an \orsl, or in at most three points. In either case, $\N$ has at most three points in $\pi$. 
In summary, we have:
 
 \begin{corollary}\Label{cor:t-special-tc}  Let $\pi$ be an exterior \orsp\ in $\PG(2,q^3)$ with exterior splash $\ES$, conic cover $\EC$ and tangent cover $\ET$. A  $\ET$-special twisted cubic of $\PG(6,q)$  corresponds to a set  $\C$ of $q+1$ points in $\PG(2,q^3)$, at most three of which lie in $\pi$. Further, $\C$ lies in a unique exterior \orsp\ $\pi'$, and forms a $(\pi',\li)$-special conic in $\pi'$. 
 \end{corollary}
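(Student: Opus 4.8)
The plan is to deduce the result directly from Theorem~\ref{two-specials-conv} together with the intersection theorem for order-$q$-subplanes sharing a common exterior splash. Write $[\C]$ for the given $\ET$-special twisted cubic. Since the tangent cover $\ET$ of $\pi$ is one of the two covers of the exterior splash $\ES$, I apply Theorem~\ref{two-specials-conv} with $\EX=\ET$. This immediately produces a unique exterior \orsp\ $\pi'$ of $\PG(2,q^3)$ containing the point set $\C$, with exterior splash $\ES$ and conic cover $\ET$, such that $\C$ is a $(\pi',\li)$-special conic of $\pi'$. In particular $\C$ is a set of $q+1$ points. This settles every assertion of the statement except the bound on the number of points of $\C$ lying in $\pi$.

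For that bound, I would first note that $\pi\neq\pi'$: the subplane $\pi$ has conic cover $\EC$ while $\pi'$ has conic cover $\ET$, and $\EC\neq\ET$ since these are the two distinct covers of $\ES$. Both $\pi$ and $\pi'$ are exterior order-$q$-subplanes with the common exterior splash $\ES$, so \cite[Theorem 7.2]{BJ-ext1} applies and $\pi\cap\pi'$ is either an \orsl\ or a set of at most three points.

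I would then treat the two cases separately. When $\pi\cap\pi'$ consists of at most three points, the points of $\C$ lying in $\pi$ all lie in $\pi\cap\pi'$ (since $\C\subseteq\pi'$), giving at most three. When $\pi\cap\pi'$ is an \orsl, this subline is a full line of $\pi'\cong\PG(2,q)$; since $\C$ is an irreducible conic of $\pi'$, it meets this line in at most two points, so again $\C$ has at most three points in $\pi$. Combining the two cases yields the claim.

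The only point requiring a little care is the subline case: one must observe that a $(\pi',\li)$-special conic really is an irreducible conic of $\pi'$ (as remarked after its definition), so that its intersection with a line of $\pi'$ obeys the usual bound of two points in $\PG(2,q)$. Beyond this, the argument is purely an assembly of Theorem~\ref{two-specials-conv} and \cite[Theorem 7.2]{BJ-ext1}, so I expect no genuine obstacle.
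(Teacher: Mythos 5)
Your proposal is correct and takes essentially the same route as the paper: the paper's own argument (in the paragraph preceding the corollary) likewise applies Theorem~\ref{two-specials-conv} with $\EX=\ET$ to obtain the unique $\pi'$ with conic cover $\ET$, and then invokes \cite[Theorem 7.2]{BJ-ext1} to conclude that $\pi\cap\pi'$ is an \orsl\ or at most three points, whence the bound. Your explicit checks---that $\pi\neq\pi'$ because their conic covers differ, and that in the subline case an irreducible $(\pi',\li)$-special conic meets a line of $\pi'$ in at most two points---merely fill in details the paper leaves implicit in its phrase ``in either case.''
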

 
We note that comments in Section~\ref{sec:mis} further support this fundamental difference in behaviour between the two covers of $\ES$ with respect to an associated exterior \orsp\ $\pi$. 

\subsection{Tangent lines to special conics in $\PG(6,q)$}\Label{sec:tgt-line-sc}

We continue our study of special conics in an exterior \orsp\ of $\PG(2,q^3)$ in this section by looking at a relationship between tangent lines of special conics in $\PG(2,q^3)$ and tangent lines of special twisted cubics in $\PG(6,q)$. It is straightforward to show that in an exterior \orsp\ $\pi$, there is a unique $(\pi,\li)$-special conic $\C$ through a fixed point $P$ tangent to a fixed line $\ell$ of $\pi$ through $P$. 
We show that in $\PG(6,q)$, the tangent lines to the two special twisted cubics $[\C]$ and $[\ell]$ at $P$ are the same.

\begin{theorem}\Label{tgtpleqconictgtpl}
Let $\pi$ be an exterior \orsp,  $P$ a point of $\pi$, and $\ell$ a line of $\pi$ through $P$. Let $\C$ be the  unique $(\pi,\li)$-special conic  of $\pi$ through $P$ with tangent $\ell$.  In $\PG(6,q)$,  the unique tangent at $P$ to the $\S$-special twisted cubic $[\ell]$, and  the unique tangent at $P$ to the $\EC$-special twisted cubic $[\C]$ are the same line.
\end{theorem}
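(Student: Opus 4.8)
The plan is to reduce to an explicit model and then compare the two tangent directions by a direct parametric calculation, using that the Bruck--Bose map is coordinatewise ${\mathbb F}_{q}$-linear on the affine chart. First I would apply Theorem~\ref{transsplashes} to assume $\pi=\Bpi$. Through a fixed point $P$ and along a fixed line $\ell$ there is a unique $(\Bpi,\li)$-special conic $\C$, so a flag $(P,\ell)$ is the same datum as a pair $(P,\C)$ with $P\in\C$ and $\ell$ the tangent to $\C$ at $P$. Since the Singer cycle $I=\PGL(3,q^3)_{\Bpi,\li}$ acts regularly on the $(\Bpi,\li)$-special conics (Corollary~\ref{s-con-in-distinct}) and its Bruck--Bose image $[I]$ is a collineation of $\PG(6,q)$ carrying twisted cubics to twisted cubics and tangents to tangents, it suffices to prove the statement for one special conic. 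I would take $\C=\sigma(\C')$, where $\C'\colon y^2=zx$ in $\pi_0=\PG(2,q)$ and $\sigma$ is the homography of Theorem~\ref{thm:Bpi}, and let $P$ range over the points of $\C$ with $\ell$ the tangent there.

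The key observation is that on the affine chart $z=1$ the map $\epsilon$ sends $(\alpha,\beta,1)\mapsto([\alpha],[\beta],1)$, and $x\mapsto[x]$ is an ${\mathbb F}_{q}$-linear bijection ${\mathbb F}_{q^3}\to{\mathbb F}_{q}^3$. Hence, for two curves meeting at the common affine point $P$, the tangent lines of their images in $\PG(6,q)$ coincide exactly when the affine velocity vectors — the parameter-derivatives of the two pairs of affine coordinates, regarded as vectors in ${\mathbb F}_{q^3}^2$ — agree up to an ${\mathbb F}_{q}$-scalar. Now $\C$ and the extended line of $\ell$ are tangent at $P$ already in $\PG(2,q^3)$, so these velocity vectors are automatically ${\mathbb F}_{q^3}$-proportional; the substance of the theorem is that the proportionality constant in fact lies in ${\mathbb F}_{q}$.

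I would then compute both velocities explicitly. Set $u=\theta_0-\tau$ and $v=\theta_0-\tau^q$. The conic point $P_\theta=K(1,\theta,\theta^2)^t=\big(\theta-\tau,\ \theta-\tau^q,\ (\theta-\tau)(\theta-\tau^q)\big)$ has affine coordinates $\big((\theta-\tau^q)^{-1},\,(\theta-\tau)^{-1}\big)$, so its velocity at $\theta=\theta_0$ is $\big(-v^{-2},\,-u^{-2}\big)$. The tangent to $\C'$ at $(1,\theta_0,\theta_0^2)$ meets $\pi_0$ in the order-$q$-subline in the direction $(0,1,2\theta_0)$; applying $K$ sends this to the subline of $\Bpi$ on $\ell$, with points
\[
P^{\mathrm{sub}}_s=\big(u+s,\ v+s,\ uv+(u+v)s\big),\qquad s\in{\mathbb F}_{q},
\]
where the third coordinate is \emph{linear} in $s$ — the quadratic terms cancel, uniformly in every characteristic, since $2\theta_0 s$ and $-(\tau+\tau^q)s$ combine to $(u+v)s$. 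Differentiating the affine coordinates $\big((u+s)/D(s),\,(v+s)/D(s)\big)$, with $D(s)=uv+(u+v)s$, at $s=0$ again gives $\big(-v^{-2},\,-u^{-2}\big)$, identical to the conic's velocity. Hence the velocity vectors coincide, the proportionality constant equals $1\in{\mathbb F}_{q}$, and the two tangent lines in $\PG(6,q)$ are the same.

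The remaining point $P_\infty=(0,0,1)$ (the value $\theta_0=\infty$) is treated by the same calculation after the substitution $\theta\mapsto1/\theta$: both the conic and the subline on the tangent line $x=0$ yield affine velocity $(1,1)$ at $P_\infty$. The main obstacle is the step isolated in the second paragraph, namely that tangency in $\PG(2,q^3)$ secures only ${\mathbb F}_{q^3}$-proportionality whereas $\PG(6,q)$ demands the stronger ${\mathbb F}_{q}$-proportionality. What forces the constant to be $1$ is the shared structure of the two parametrizations — both third coordinates are built from the same pair of linear forms $\theta-\tau$ and $\theta-\tau^q$ — so the real work is arranging the parametrizations (via $\sigma$ and the direction $(0,1,2\theta_0)$) so as to expose this cancellation.
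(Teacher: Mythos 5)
Your proof is correct, and it takes a genuinely different route from the paper's. The paper fixes the point $P=(0,0,1)$, runs over the whole pencil of special conics $\C_k$ through $P$, and computes the tangent direction at $P$ of each of the two twisted cubics as a limit of chords, extracting leading coefficients of expressions like $F(x)^{q^2+q}$ and reparametrising the conic via $\theta=1/r-w$ so that $P$ sits at parameter $\infty$; both limits come out as $w([1],[1],0)+([\tau],[\tau^q],0)$. You make the dual reduction — fix one special conic and let $P$ range over it, using the regular Singer action on special conics (note this fact is quoted in the \emph{proof} of Corollary~\ref{s-con-in-distinct}, from Theorem 6.1 of the first exterior-splash paper, not in its statement) — and then compare formal derivatives of the affine parametrisations directly, exploiting the clean factorisation $P_\theta=(\theta-\tau,\,\theta-\tau^q,\,(\theta-\tau)(\theta-\tau^q))$ so that both velocities equal $(-v^{-2},-u^{-2})$ on the nose. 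Your conceptual framing of the crux is an improvement: tangency in $\PG(2,q^3)$ only forces ${\mathbb F}_{q^3}$-proportionality of velocities, coincidence of tangents in $\PG(6,q)$ demands ${\mathbb F}_{q}$-proportionality, and the Bruck--Bose map commutes with formal differentiation because $x\mapsto[x]$ is ${\mathbb F}_{q}$-linear; this works uniformly in every characteristic and avoids the paper's leading-term analysis entirely. One step you should spell out rather than assert: that the tangent line to the twisted cubic at an affine point is genuinely computed by the affine velocity, i.e.\ writing the homogeneous parametrisation as $(N(\theta),D(\theta))$, the span $\langle F(\theta_0),F'(\theta_0)\rangle$ contains the point $\bigl((N'D-ND')(\theta_0),\,0\bigr)$ of $\si$, which is $D(\theta_0)^2\in{\mathbb F}_{q}$ times the affine velocity — this one-line check, together with the nonvanishing of $(-v^{-2},-u^{-2})$ and the fact that reparametrisations of the ${\mathbb F}_{q}$-parameter only rescale velocities by ${\mathbb F}_{q}$-scalars, is what licenses your criterion and reconciles it with the paper's chord-limit computation. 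Your $\theta_0=\infty$ case also checks out: $K(\phi^2,\phi,1)^t=(\phi(1-\tau\phi),\,\phi(1-\tau^q\phi),\,(1-\tau\phi)(1-\tau^q\phi))$ gives velocity $(1,1)$ at $\phi=0$, matching the subline on $x=0$.
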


\begin{proof} 
By Theorem~\ref{transsplashes} and \cite[Theorem 2.2]{BJ-ext1}, we can without loss of generality prove this for the exterior \orsp\ $\Bpi$ coordinatised in Theorem~\ref{thm:Bpi}, and the point $P=(0,0,1)$. 
We begin with the  \orsp\ $\pi_0=\PG(2,q)$ and the exterior line $\elltau$, and calculate the coordinates of the $q+1$ $(\pi_0,\ell$)-special conics $\C_k$ ($k\in\mathbb F_q\cup\{\infty\}$) of $\pi_0$ through the point $P=(0,0,1)$. We then apply the homography $\sigma$ from Theorem~\ref{thm:Bpi} to transform each conic $\C_k$  to a $(\Bpi,\li)$-special conic $\D_k$ of $\Bpi$ through the point $\sigma(P)=P$. 

By \cite[Lemma 2.4]{BJ-ext1},  the exterior splash $\ES$ of $\pi_0$ on $\ell$ has carriers $\car=(1,\w,\w^2)$, $\car^\r$. 
Consider the two conics $\C_0:y^2-xz=0$ and $\C_\infty:-t_0x^2+yz-t_2xz-t_1xy=0$ of $\pi_0$. They both contain the points $P,\car,\car^q,\car^{q^2}$, hence each conic in the  pencil  $\{\C_k=\C_0+k\C_\infty\st k\in{\mathbb F}_{q}\cup\infty\}$ contains the four points $P,\car,\car^q,\car^{q^2}$. That is, each $\C_k$ is a $(\pi_0,\ell)$-special conic of $\pi_0$. For $q$ both even and odd, the tangent line $t_k'$ at $P$ to $\C_k$ is the line $[-1-kt_2,k,0]=[w,1,0]$ where 
\begin{equation}\label{w-eqn}w=\frac{-kt_2-1}{k}.\end{equation} Further, the conic  $\C_k$ consists of the points 
$C_{k,\theta}=(1, \theta,f(\theta)) $ for $\theta\in{\mathbb F}_{q}\cup\{\infty\}$, where $f(\theta)=(-\theta^2+kt_1\theta+kt_0)/(k\theta-kt_2-1)$.
We now apply the homography $\sigma$ of Theorem~\ref{thm:Bpi} with matrix $K$ to map:
 the $(\pi_0,\ell)$-special conic $\C_k$ (with points $C_{k,\theta}$) of $\pi_0$  to a $(\Bpi,\li)$-special conic ${\mathcal D}_k$  (with points $D_{k,\theta}$) of $\Bpi$; and 
 the tangent line $t_k'$ to $\C_k$ at $P$  to the tangent line $t_k=\sigma(t_k')$ to $\D_k$ at $P$. In $\PG(6,q)$, $[t_k]$ is an $\ES$-special twisted cubic by Theorem~\ref{sublinesinBB}, and $[\mathcal D_k]$  is a $\EC$-special twisted cubic 
by Theorem~\ref{two-specials} (where $\EC$ is the conic cover of $\Bpi$).

Firstly, we consider the tangent line at the point $P$ to the $\ES$-special twisted cubic $[t_k]$,  we
denote the intersection of this tangent line with $\si$ by  $I_{P,{t_k}}$. 
Note that points of $\pi_0$ on the line $t_k'$  distinct from $P$ have coordinates $P_x'=(1,-w,x)$ for $x\in{\mathbb F}_{q}$, so points  of $\Bpi$ on the line $t_k$ distinct from $P$ have coordinates $P_x=\sigma(P_x')=(-\tau-w,\ -\tau^q-w,\ \tau\tau^q+(\tau+\tau^q)w+x)$, for $x\in{\mathbb F}_{q}$.
To convert this to a coordinate in $\PG(6,q)$, we need to multiply by an element of ${\mathbb F}_{q^3}$ so that the last coordinate lies in ${\mathbb F}_{q}$. Let $F(x)=\tau\tau^q+(\tau+\tau^q)m+x$ (the third coordinate in $P_x$). As $F(x)\in{\mathbb F}_{q^3}$, we have $F(x)^{q^2+q+1}\in{\mathbb F}_{q}$, so multiply the coordinates of $P_x$ by $F(x)^{q^2+q}$. 
So the point $P_x\in\Bpi$ corresponds in $\PG(6,q)$ to the point $P_x$ with coordinates 
$
P_x= ([-(\tau+w)F(x)^{q^2+q}],\ [-(\tau^q+w)F(x)^{q^2+q}],\ F(x)^{q^2+q+1})
$.
To calculate $I_{P,{t_k}}$, let $Q_x=PP_x\cap\Sigma_\infty$. Now $F(x)^{q^2+q}$ is equal to $x^2$ plus lower powers of $x$. As $P=(0,0,1)$ is $P_x$ with $x=\infty$,  we can calculate $I_{P,{t_k}}=Q_\infty$ by dividing all the coordinates by $x^2$ and letting $x\rightarrow\infty$.
\begin{eqnarray*}
I_{P,{t_k}}&=&\lim_{x\rightarrow\infty} PP_x\cap\Sigma_\infty
=([-\tau-w], [-\tau^q-w], 0)
\ \equiv\  w([1],[1],0)+([\tau],[\tau^q],0).
\end{eqnarray*}

Secondly, we consider the tangent line at the point $P$ to the $\EC$-special twisted cubic $[\mathcal D_k]$, we denote the intersection of this tangent line with $\si$ by  $I_{P,{\D_k}}$.
The conic $\D_k$ has points $
D_{k,\theta}=\sigma(\C_{k,\theta})=(
-\tau+\theta,\ -\tau^q+\theta,\ \tau\tau^q-(\tau+\tau^q)\theta+f(\theta))$.
Let $g(\theta)=\tau\tau^q-(\tau+\tau^q)\theta+f(\theta)$ (the third coordinate), and multiply all coordinates by $g(\theta)^{q^2+q}$, so that the third coordinate is now in ${\mathbb F}_{q}$.
 Then in $\PG(6,q)$, $[\D_k]$ has points $D_{k,\theta}= \big([(-\tau+\theta)g(\theta)^qg(\theta)^{q^2}],\ [(-\tau^q+\theta)g(\theta)^qg(\theta)^{q^2}],\ g(\theta)g(\theta)^qg(\theta)^{q^2}\big)$, for $\theta\in{\mathbb F}_{q}\cup\{\infty\}$. By considering $f(\theta)$, we see that $\theta=(kt_2+1)/k$ corresponds to the point $P=(0,0,1)$. 
 
 To calculate $I_{P,\mathcal D_k}$, we 
 re-parameterise the points $D_{k,\theta}$ so that the point $P$ of $[\D_k]$  has parameter $\infty$. Replace the parameter $\theta\in{\mathbb F}_{q}\cup\{\infty\}$ by the parameter $r\in{\mathbb F}_{q}\cup\{\infty\}$ where $\theta=1/r+(kt_2+1)/k=1/r-w$ with $w$ as in (\ref{w-eqn}) (equivalently, $r=1/(\theta+w)$). Using the parameter $r$, the twisted cubic $[\D_k]$ has points we denote by $D_{k,r}$, $r\in\mathbb F_q\cup\{\infty\}$, and we have $P$ is the point $D_{k,\infty}$. 
 For a fixed $k$, the line joining $P=D_{k,\infty}$ to $D_{k,r}$ meets $\si$ in a point denoted $A_r$, $r\in{\mathbb F}_{q}$. We let $r\rightarrow\infty$ to find the point $I_{P,\mathcal D_k}=A_\infty$.
As we will only be interested in the coordinates for $D_{k,r}$ in the case 
when  $r\rightarrow\infty$,  we look at the  coordinates  of $D_{k,r}$ as polynomials in $r$, and only calculate the highest powers of $r$.
We can write $g(\theta)=ar+\mu+\nu/r$, where $a=w^2/k-t_1w+t_0\in{\mathbb F}_{q}$, and $\mu,\nu\in{\mathbb F}_{q^3}$. 
Then
the first coordinate of $D_{k,r}$  is $-(\tau+w)a^2r^2$ plus lower powers of $r$.  The second coordinate of  $D_{k,r}$ is $-(\tau^q+w)a^2r^2$ plus lower powers of $r$.  The third coordinate of $D_{k,r}$ is $a^3r^3$ plus lower powers of $r$.
Thus
$
I_{P,\mathcal D_k}=\lim_{r\rightarrow\infty}PD_{k,r}\cap\si
=\left(\left[-(\tau+w\right)a^2],\ \left[-(\tau^q+w)a^2\right],\ 0\right)
\equiv([\tau],[\tau^q],0)+w([1],[1],0),
$
as $a\in{\mathbb F}_{q}$.
This is the same as the point $I_{P,{t_k}}$ calculated above, hence the two tangent lines in $\PG(6,q)$ through $P$ are equal.
\end{proof}


\subsection{Consequences of the special conic representation}\Label{sec:mis}

The results of Section~\ref{sec:sp-con} about the representation in $\PG(6,q)$ of $(\pi,\li)$-special conics of an exterior \orsp\ $\pi$ have two interesting applications. 
Firstly, we consider the structure of an exterior \orsp\ $\pi$ in $\PG(6,q)$. We showed in \cite[Theorem 4.1]{BJ-ext2} that $[\pi]$ is the intersection of nine quadrics. We show in Theorem~\ref{thm:quad-trans} that each of these nine  quadrics contains the transversal lines of both the exterior splash $\ES$  and the conic cover $\EC$ of $\pi$. 
Secondly, in Theorem~\ref{pipidash}, we consider applications to replacement sets of $\PG(5,q)$.

\begin{theorem} \Label{thm:quad-trans}
Let $\pi$ be an exterior \orsp\ of $\PG(2,q^3)$ with exterior splash $\ES$ and conic cover $\EC$. Then in the cubic extension $\PG(6,q^3)$, the nine quadrics that determine $[\pi]$ each contain the transversal lines  $\gs,\gs^q,\gs^{q^2}$, $\gc,\gc^q,\gc^{q^2}$ of $\ES$ and $\EC$.\end{theorem}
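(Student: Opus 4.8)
The plan is to reduce to the coordinatized subplane $\Bpi$ and then prove the stronger statement that each of the six transversals meets each of the nine quadrics in at least three points, since a line of $\PG(6,q^3)$ that meets a quadric in three points is contained in it. First I would use Theorem~\ref{transsplashes} (together with \cite[Theorem 2.2]{BJ-ext1}) to assume without loss of generality that $\pi=\Bpi$ is the \orsp\ of Theorem~\ref{thm:Bpi}, so that $\ES$, $\EC$ and the transversals $\gs,\gs^q,\gs^{q^2},\gc,\gc^q,\gc^{q^2}$ are the explicitly coordinatized objects; write $\Q_1,\ldots,\Q_9$ for the nine quadrics, each of which contains $[\pi]$.

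For the transversals of $\ES$, recall that the lines of $\pi$ are exterior \orsls, so by Theorem~\ref{sublinesinBB} each line $m$ of $\pi$ corresponds to an $\S$-special twisted cubic $[m]\subseteq[\pi]$. Its extension $[m]^*$ meets each of $\gs,\gs^q,\gs^{q^2}$ in a single point; moreover, since the $3$-space $\langle[m]\rangle$ meets $\si$ in the spread plane $[T]$ (where $T$ is the point of $\ES$ on the extended line $\bar m$ of $\pi$), the point $\gs\cap[m]^*$ equals $\gs\cap[T]^*$. Now choose three lines $m_1,m_2,m_3$ of $\pi$ through three distinct points $T_1,T_2,T_3$ of $\ES$. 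The spread planes $[T_1],[T_2],[T_3]$ are pairwise disjoint, so the three points $\gs\cap[T_j]^*$ are pairwise distinct; and each lies on $\Q_i$ because $[m_j]\subseteq[\pi]\subseteq\Q_i$. Hence $\gs$ meets $\Q_i$ in at least three points, so $\gs\subseteq\Q_i$, and the same three sublines give $\gs^q,\gs^{q^2}\subseteq\Q_i$.

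For the transversals of $\EC$ I would run the identical argument with special conics in place of sublines. By Theorem~\ref{two-specials} each $(\pi,\li)$-special conic $\C$ corresponds to an $\EC$-special twisted cubic $[\C]\subseteq[\pi]$ whose extension meets $\gc,\gc^q,\gc^{q^2}$, and by the proof of that theorem $\gc\cap[\C]^*$ lies in the cover plane of $\EC$ carrying $[\C]$. By Corollary~\ref{s-con-in-distinct}, three distinct special conics sit over three distinct, hence disjoint, cover planes, producing three distinct points of $\gc$, each on $\Q_i$; thus $\gc\subseteq\Q_i$ and likewise $\gc^q,\gc^{q^2}\subseteq\Q_i$. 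As $i$ is arbitrary, all nine quadrics contain all six transversals.

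The step I expect to be the main obstacle is the inference ``$[m_j]\subseteq\Q_i$ forces $\gs\cap[m_j]^*\in\Q_i$'': the quadric is only known to pass through the $q+1$ rational points of the twisted cubic, whereas the transversal intersection is one of its conjugate points in $\si$. The clean remedy is to note that restricting $\Q_i$ to the degree-three parametrization of $[m_j]$ yields a polynomial of degree at most six in the curve parameter vanishing at those $q+1$ values, hence identically zero once $q\ge 7$, giving $[m_j]^*\subseteq\Q_i$ and the required incidence (the conic case is identical). For the remaining small $q$, and as a uniform verification, I would return to the coordinatization of $\Bpi$: using the explicit equations of the nine quadrics from \cite[Theorem 4.1]{BJ-ext2} and the explicit parametrizations of $\gs$ and $\gc$, substitute a general transversal point into each quadric and check that it vanishes, invoking $\tau^q+\tau^{q^2}=t_2-\tau$ and $\tau^q\tau^{q^2}=\tau^2-t_2\tau-t_1$ and using Frobenius conjugation to deduce the $q$- and $q^2$-conjugate transversals from $\gs$ and $\gc$.
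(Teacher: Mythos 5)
Your proposal is correct and follows essentially the same route as the paper: the paper likewise obtains points of $\gs$ from the $q^2+q+1$ $\ES$-special twisted cubics arising from the lines of $\pi$ (via Theorem~\ref{sublinesinBB}) and points of $\gc$ from the $\EC$-special twisted cubics arising from the special conics (via Theorem~\ref{two-specials} and Corollary~\ref{s-con-in-distinct}), then concludes each transversal lies in each quadric because a line meeting a quadric in more than two points is contained in it. The only differences are cosmetic or in your favour: the paper uses all $q^2+q+1$ cubics rather than three and passes directly to the assertion that the extended quadrics contain the transversal points, silently assuming that a quadric through the rational points of a twisted cubic contains its extension to $\PG(6,q^3)$ --- exactly the subtlety you flag and patch with the degree-six polynomial argument (and a coordinate check for small $q$), so your write-up is, if anything, more careful than the paper's.
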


\begin{proof}
  By Theorem~\ref{sublinesinBB}, 
  the $q^2+q+1$ lines of $\pi$ correspond to $q^2+q+1$ $\ES$-special twisted cubics in $[\pi]$, each lying in a 3-space through a distinct plane of $\ES$. So  
 in $\PG(6,q^3)$, each of the nine quadrics defining $[\pi]$ contains $q^2+q+1$ points on $\gs$.  As 
$q^2+q+1>3$, each of these nine quadrics contains the transversal line $\gs$, and hence also contains its conjugates $\gs^q,\gs^{q^2}$. 

Next we consider the $q^2+q+1$ $(\pi,\li)$-special conics in $\pi$. By Theorem~\ref{two-specials} and Corollary~\ref{s-con-in-distinct}, these correspond to $q^2+q+1$ $\EC$-special conics, each containing a distinct point of $g_\EC$. As $q^2+q+1>3$, each of the nine quadrics containing $[\pi]$ contain $g_\EC$ and similarly contain $g_\EC^q$ and $\gc^{q^2}$.
\end{proof}

We conjecture that none of the nine quadrics defining $[\pi]$ contain the transversal lines  $\gt,\gt^q,\gt^{q^2}$ of the tangent cover $\ET$, but a geometric proof eludes us.

Recall that in $\PG(6,q)$, $\S$ is a 
 regular 2-spread, and the Bruck-Bose plane associated with $\S$ is denoted $\P(\S)\cong\PG(2,q^3)$. Let $\pi$ be an exterior \orsp\ of $\P(\S)$ with exterior splash $\ES\subset \S$ and conic cover $\EC$. 
In \cite{BJ-ext2}, we showed that the conic cover $\EC$ is contained in a unique regular 2-spread (with transversal lines $\gc, \gc^q,\gc^{q^2}$) denoted $\S_\EC$.
The regular 2-spread $\S_\EC$ gives rise to the Bruck-Bose plane $\P(\S_\EC)\cong\PG(2,q^3)$. 
We consider the representation of $\pi$  (and its lines and special conics) in the Bruck-Bose plane $\P(\S_\EC)$.

\begin{theorem}\Label{pipidash}
Let $\pi$ be an exterior \orsp\ in $\P(\S)\cong\PG(2,q^3)$ with  exterior splash $\ES$, conic cover $\EC$, and tangent cover $\ET$. The points of $[\pi]$ correspond to a set of points $\pi'$ in the Desarguesian plane $\P(\S_\EC)$ satisfying: 
\begin{enumerate}
\item 
$\pi'$ is an exterior \orsp\ of $\P(\S_\EC)$,
\item the lines of $\pi$ correspond exactly to the $(\pi',\li'$)-special conics of $\pi'$, 
\item the $(\pi,\li)$-special conics of $\pi$ correspond exactly to the lines of $\pi'$,
\item $\pi'$ has exterior splash  $\EC$, conic cover   $\ES$, and tangent cover $\ET$.
\end{enumerate}
\end{theorem}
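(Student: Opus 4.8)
The key observation is that the two Bruck--Bose planes $\P(\S)$ and $\P(\S_\EC)$ are built from the same $\PG(6,q)$ with the same hyperplane $\si$, and so share the affine point set $\PG(6,q)\setminus\si$. Hence the $q^2+q+1$ points $[\pi]$ representing $\pi$ in $\P(\S)$ simultaneously determine a point set $\pi'$ of $\P(\S_\EC)$ with $|\pi'|=q^2+q+1$, and the theorem becomes a comparison of the two readings of one fixed configuration. By Theorem~\ref{transsplashes} I may take $\pi=\Bpi$: a collineation of $\P(\S)$ carrying $\pi$ to $\Bpi$ induces a collineation of $\PG(6,q)$ fixing $\si$ and $\S$, which sends $\EC$, and hence $\S_\EC$, to the corresponding objects for $\Bpi$, and thus transports the whole statement.

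First I would transport the special conics. By Theorem~\ref{two-specials} and Corollary~\ref{s-con-in-distinct}, each $(\pi,\li)$-special conic $\C$ of $\pi$ gives an $\EC$-special twisted cubic $[\C]$ lying in a $3$-space about a distinct plane of $\EC$ and meeting the transversals $\gc,\gc^q,\gc^{q^2}$. As these are exactly the transversals of $\S_\EC$ and $\EC\subset\S_\EC$, each $[\C]$ is an $\S_\EC$-special twisted cubic, so by Theorem~\ref{sublinesinBB}(2) applied in $\P(\S_\EC)$ it is an \orsl\ of $\P(\S_\EC)$ exterior to $\li'$, all of whose $q+1$ points lie in $\pi'$ and whose extended line meets $\li'$ in its plane of $\EC$. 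This produces a family $\mathcal B$ of $q^2+q+1$ \orsls\ contained in $\pi'$. Since $\C\mapsto[\C]$ is a point-incidence preserving bijection and the incidence structure (points of $\pi$, $(\pi,\li)$-special conics) is isomorphic to $\PG(2,q)$, the pair $(\pi',\mathcal B)$ is an abstract $\PG(2,q)$ whose lines are genuine \orsls.

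The main obstacle is to upgrade this to a genuine subplane: I must show $\pi'$ is an \orsp\ of $\P(\S_\EC)$ with $\mathcal B$ as its set of lines. The cleanest route uses the symmetry of the hyper-reguli triple $\{\ES,\EC,\ET\}$ (cf.\ \cite{BJ-ext1}), so that $\EC$ is itself an exterior splash of $\li'$ in $\P(\S_\EC)$ with covers $\ES$ and $\ET$. Each line $\ell$ of $\pi$ is an \orsl, hence $[\ell]$ is an $\ES$-special twisted cubic, and since $\ES$ is now a cover of $\EC$, Theorem~\ref{two-specials-conv} provides a unique exterior \orsp\ $\rho_\ell$ of $\P(\S_\EC)$ (with splash $\EC$ and conic cover $\ES$) containing the points of $[\ell]$, in which $[\ell]$ is a special conic. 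The delicate step, where I expect the real work to lie, is to prove $[\pi]=\rho_\ell$ as point sets: equivalently, that the subplanes $\rho_\ell$ arising from the different lines $\ell$ of $\pi$ all coincide, so that $\rho_\ell$ contains every point of $[\pi]$ and hence equals it by cardinality. I would attempt this by a matching argument comparing, through a fixed point of $\pi$, the $q+1$ images of lines of $\pi$ with the $q+1$ special conics of $\rho_\ell$ (both families consist of $\ES$-special twisted cubics lying about a common set of $q+1$ planes of $\ES$); failing a clean combinatorial argument, the reliable fallback is to carry out the reduction to $\Bpi$ and use the explicit cover plane $[C_1]=\{([x],[x^{q^2}],0):x\in{\mathbb F}_{q^3}'\}$ of \cite{BJ-ext2} to set up the second Bruck--Bose map from $\S_\EC$ and recognise $\pi'$ directly as an \orsp.

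Once $\pi'$ is known to be an \orsp\ with splash $\EC$, the four parts follow. Part (1) and the splash claim in (4) are immediate, since all points of $\pi'$ are affine and the lines of $\pi'$ (the members of $\mathcal B$) meet $\li'$ in the $q^2+q+1$ distinct planes of $\EC$. For part (3), the lines of $\pi'$ are by construction the images of the $(\pi,\li)$-special conics of $\pi$. For part (2), each line $\ell$ of $\pi$ has $[\ell]$ an $\ES$-special twisted cubic contained in $\pi'$; by the uniqueness in Theorem~\ref{two-specials-conv} the subplane carrying it as a special conic is $\pi'$ itself, so $[\ell]$ is a $(\pi',\li')$-special conic, and counting the $q^2+q+1$ lines of $\pi$ against the $q^2+q+1$ special conics of $\pi'$ shows these exhaust them. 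Finally the conic cover of $\pi'$ is $\ES$ (its special conics are $\ES$-special twisted cubics, by Corollary~\ref{6dim-nrc} in $\P(\S_\EC)$), whence its tangent cover must be the remaining member $\ET$ of the triple, completing (4).
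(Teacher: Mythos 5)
Your first half coincides with the paper's proof: special conics of $\pi$ become $\EC$-special, hence $\S_\EC$-special, twisted cubics, so by Theorem~\ref{sublinesinBB} they are \orsls\ of $\P(\S_\EC)$, and the incidence structure $(\pi',\mathcal B)$ is an abstract $\PG(2,q)$. The genuine gap is what you do next: you declare the upgrade to an actual subplane ``the main obstacle'', abandon $(\pi',\mathcal B)$, and reroute through Theorem~\ref{two-specials-conv} applied to a line $\ell$ of $\pi$, obtaining a subplane $\rho_\ell$ and needing the identity $[\pi]=\rho_\ell$ --- which you leave unproved, offering only an unspecified ``matching argument'' or a retreat to coordinates. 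Nothing in your proposal forces the various $\rho_\ell$ to coincide, nor any single $\rho_\ell$ to contain a point of $[\pi]$ beyond the $q+1$ points of $[\ell]$; so part (1), on which (2)--(4) rest, is asserted rather than established. Your detour also quietly assumes, before it is known, that $\EC$ is an exterior splash of $\li'$ in $\P(\S_\EC)$ with cover $\ES$ (needed to invoke Theorem~\ref{two-specials-conv} there); in the paper this is an \emph{output} of the theorem, read off from $\pi'$ once $\pi'$ is known to be an \orsp\ whose lines extend to meet $\li'$ in the planes of $\EC$.

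The irony is that you were one observation away from the paper's argument, which closes (1) directly from what you already had: since the points and $(\pi,\li)$-special conics of $\pi$ form a plane isomorphic to $\PG(2,q)$ (noted in Section 3.1), and each block of $\mathcal B$ is a set of $q+1$ collinear points of $\P(\S_\EC)$, the structure $(\pi',\mathcal B)$ \emph{is} a subplane of order $q$. To make the implicit step explicit: if the extension of a block $b\in\mathcal B$ to a line $m$ of $\P(\S_\EC)$ contained a point $P\in\pi'\setminus b$ (or if two blocks extended to the same line), then the blocks joining $P$ to points of $b$ would also lie on $m$, so $m$ would carry at least $2q+1$ points of $\pi'$; but choosing $R\in\pi'\setminus m$ (which exists, since $[\pi]$ is not contained in a single $3$-space of $\PG(6,q)$), each of the $q+1$ blocks through $R$ meets $m$ in at most one point of $\pi'$, whence $m$ carries at most $q+1$ such points --- a contradiction. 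So the induced incidence is the abstract one, and $\pi'$ is an \orsp, exterior to $\li'$ because all its points are affine. From that point on your treatment of (2)--(4) --- the uniqueness clause of Theorem~\ref{two-specials-conv} making each $[\ell]$ a $(\pi',\li')$-special conic of $\pi'$ itself, the count of $q^2+q+1$ against $q^2+q+1$, and reading off splash $\EC$, conic cover $\ES$ and tangent cover $\ET$ --- matches the paper's proof and goes through.
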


\begin{proof}
Let $\pi$ be an exterior \orsp\ in $\PG(2,q^3)$ with exterior splash $\ES$, conic cover $\EC$ and tangent cover $\ET$. 
Label the lines of $\pi$ in $\PG(2,q^3)$ by $\ell_1,\ldots,\ell_{q^2+q+1}$, and label the  $q^2+q+1$
 $(\pi,\li)$-special conics in $\pi$ by $\C_1,\ldots,\C_{q^2+q+1}$.
 By Theorem~\ref{sublinesinBB}, the line $\ell_i$ corresponds to an $\S$-special twisted cubic $[\ell_i]$ in a $3$-space about a plane of $\ES$. By Theorem~\ref{two-specials}, the $(\pi,\li)$-special conic $\C_i$ corresponds to a $\EC$-special twisted cubic  in a $3$-space about a plane of $\EC$. 
By Theorem~\ref{sublinesinBB},
in the Desarguesian plane $\P(\S_\EC)$, $\pi'$ is a set of $q^2+q+1$ points, 
with lines $\C_1,\ldots,\C_{q^2+q+1}$. As the points and $(\pi,\li)$-special conics of $\pi$ form a Desarguesian plane,   $\pi'$ is an exterior \orsp\ of $\P(\S_\EC)$.
Further by Theorem~\ref{two-specials-conv}, $\pi'$ has $(\pi',\li')$-special conics $\ell_1,\ldots,\ell_{q^2+q+1}$.
Moreover  in $\PG(6,q)$, $\pi'$ has exterior splash  $\EC$, conic cover $\ES$, and tangent cover $\ET$.
\end{proof}

We relate this theorem to the nine quadrics that define $[\pi]$ discussed above. In $\PG(6,q)$, the sets $[\pi]$, $[\pi']$ of Theorem~\ref{pipidash} are equal,
so the  nine quadrics used to define $[\pi]$ are the same as the nine quadrics used to define $[\pi']$.  This is consistent with  Theorem~\ref{thm:quad-trans} which shows  that in $\PG(6,q^3)$, the nine quadrics defining $[\pi]$ all contain $\gs,\gs^q,\gs^{q^2}$ and $\gc,\gc^q,\gc^{q^2}$, and the  roles of $\ES$ and $\EC$ are reversed in $[\pi]$ and $[\pi']$ by Theorem~\ref{pipidash}.

In a similar way to Theorem~\ref{pipidash},  we can we  consider the unique 2-spread $\S_\ET$ (with transversal lines $\gt,\gt^q,\gt^{q^2}$) containing the tangent cover $\ET$ of $\pi$. In the Bruck-Bose plane $\P(\S_\ET)$, consider the set of points $\pi''
$ corresponding to the points of $[\pi]$. Then $\pi''$ is a set of $q^2+q+1$ points. To define `lines' of $\pi''$, we consider  the pointsets in $\pi''$ that correspond to $\ET$-special twisted cubics. By Corollary~\ref{cor:t-special-tc}, these `lines' have at most three points. So $\pi''$ is not an \orsp\ of $\P(\S_\ET)$.

\section{Order-$q$-subplanes with a common splash}

The article \cite{BJ-ext1} briefly looked at \orsps\ that share a common splash. In particular, \cite[Theorem 7.2]{BJ-ext1} showed that two \orsps\ with a common splash either share a common \orsl, or meet in at most three points.  Further (see
 Theorem~\ref{two-subplanes})  there are exactly two \orsps\ with a common exterior splash and a common \orsl. 
In this section, we further the investigation of exterior \orsps\ with a common splash. 

Let $\pi_1,\pi_2$ be the two exterior \orsps\ with a common exterior  splash $\ES$ and common \orsl\ $b$. In Section~\ref{sec:orsl-common-splash}, we investigate the two families of \orsls\ in $\ES$ in relation to $\pi_1$ and $\pi_2$. Further, we show that  if $\pi_1$ has conic cover $\EC$ and tangent cover $\ET$, then $\pi_2$ has conic cover $\ET$ and tangent cover $\EC$. 
In Section~\ref{sec:two-orsp-sp-con}, we give a relationship between the $(\pi_1,\li)$-special conics of $\pi_1$ and the $(\pi_2,\li)$-special conics of $\pi_2$.
 In Section~\ref{sec:construct}, we start with an exterior  splash $\ES$ and appropriate \orsl\ $b$, and give a geometric construction in the $\PG(6,q)$ setting of the two \orsps\ that share $\ES$ and $b$.  Note that this is analogous to the construction in \cite{BJ-tgt1} which gives a similar construction in the case of an \orsp\ tangent to $\li$ with a fixed tangent splash on $\li$.

\subsection{Order-$q$-sublines in a common splash}\Label{sec:orsl-common-splash}

 By \cite{lavr10}, an exterior splash $\ES$ of $\li$ in $\PG(2,q^3)$ contains  $2(q^2+q+1)$ \orsls\ that lie in two distinct {\em families} of size $q^2+q+1$. If $\pi$ is an \orsp\ with exterior splash $\ES$, then the two families of $\ES$ are characterised in relation to $\pi$  in \cite[Theorem 5.2]{BJ-ext1}. If $P$ is a point of $\pi$, then the lines of $\pi$ through $P$ meet $\ES$ in an \orsl\ called a {\em $\pi$-\psline} of $\ES$. The other family of \orsls\ of $\ES$ can be constructed from ``special'' dual conics in $\pi$ and these \orsls\ are called {\em $\pi$-\dcsline s} of $\ES$. Note also that by \cite[Theorem 5.3]{BJ-ext1}, there is some other exterior \orsp\ $\pi'$ with common exterior splash $\ES$ for which the $\pi$-\psline s of $\ES$ are the $\pi'$-\dcsline s of $\ES$. That is, the classification of the two families {\em depends} on the associated \orsp.
 In this section we improve this result by finding the relationship between the two \orsps\ $\pi,\pi'$. We show that the  two \orsps\ with common exterior splash $\ES$ and sharing an \orsl\ $b$ (which exist by Theorem~\ref{two-subplanes}) interchange the roles of the two families of \orsls\ of $\ES$.

\begin{theorem}\Label{interchange-sl}
Let  $\pi_1,\pi_2$ be the two exterior \orsps\ of $\PG(2,q^3)$ with common exterior splash $\ES$ and a common \orsl\ $b$. Then the $\pi_1$-\psline s of $\ES$ are the $\pi_2$-\dcsline s of $\ES$ (and 
the $\pi_1$-\dcsline s of $\ES$ are the $\pi_2$-\psline s of $\ES$).
\end{theorem}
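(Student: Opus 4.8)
The plan is to reduce the statement to the single assertion that $\pi_1$ and $\pi_2$ split the \orsls\ of $\ES$ into \psline s and \dcsline s in opposite ways, and then to verify this by explicit coordinates. By \cite{lavr10} the $2(q^2+q+1)$ \orsls\ of $\ES$ fall into two families $\mathcal A,\mathcal B$ that depend only on $\ES$, and by \cite[Theorem 5.2]{BJ-ext1} the $\pi$-\psline s of an \orsp\ $\pi$ with exterior splash $\ES$ constitute exactly one of these families and the $\pi$-\dcsline s the other. Since distinct points of $\pi$ give distinct \psline s, the \psline s of $\pi$ fill an entire family. Thus it suffices to prove that the \psline-family of $\pi_1$ differs from the \psline-family of $\pi_2$: the two families being $\mathcal A,\mathcal B$, this forces the \psline s of $\pi_2$ to coincide with the \dcsline s of $\pi_1$, and symmetrically.

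By Theorem~\ref{transsplashes} I may take $\ES$ to be the splash of Theorem~\ref{thm:Bpi} with $\pi_1=\Bpi$; the stabiliser $\PGL(3,q^3)_{\Bpi,\li}$ is a Singer cycle, hence transitive on the lines of $\Bpi$, so I may take the common \orsl\ $b$ to be any convenient line of $\Bpi$, and I write $T$ for the point of $\ES$ in which the extension of $b$ meets $\li$. The first substantive step is to coordinatise the second \orsp\ $\pi_2$ through $b$ supplied by Theorem~\ref{two-subplanes}. I would obtain $\pi_2$ either by solving directly the conditions that it pass through the $q+1$ points of $b$ and have exterior splash $\ES$, or by producing a collineation of $\PG(2,q^3)$ that fixes $b$ and $\ES$ and carries $\Bpi$ to the other solution.

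With $\pi_2$ in hand I would compare the two \psline-families through $T$. For each plane the \psline s through $T$ are precisely $\{s_P:P\in b\}$, where $s_P$ is the set of splash points of the lines of that plane through $P$, because $b$ is a line of each plane and the line-to-splash map is a bijection for each \orsp. It is then enough to fix one point $P\in b$, compute the \psline\ $s_P$ of $\pi_2$ from the coordinates, and check that it is not one of the \psline s of $\pi_1$; equivalently, invoking the description of \dcsline s in \cite[Theorem 5.2]{BJ-ext1}, that $s_P$ is a $\pi_1$-\dcsline. This shows the two \psline-families differ and completes the argument via the reduction above.

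The main obstacle is coordinatising $\pi_2$: the two \orsps\ through $(b,\ES)$ play symmetric roles, so no formula is handed to us and one must genuinely solve the splash condition or build the swapping collineation; thereafter the comparison is a finite, mechanical computation. A cleaner route I would pursue in parallel works in $\PG(6,q)$: by Theorem~\ref{sublinesinBB} each \orsl\ of $\ES$ corresponds to a $2$-regulus of $\S$ lying in $\ES$, and the two families are distinguished by which of the covers $\EC,\ET$ cuts such a regulus in collinear points rather than in a conic. As $\pi_2$ is exactly the \orsp\ whose conic and tangent covers are swapped relative to $\pi_1$, its \psline s are then automatically the $\pi_1$-\dcsline s; the price of this route is that it rests on the cover-interchange and on making the regulus/cover dichotomy precise.
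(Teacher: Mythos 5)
Your reduction (the \psline s of an \orsp\ fill exactly one of the two families of \orsls\ of $\ES$, so it suffices to show the two \psline-families differ) is correct, and your ``collineation variant'' is in fact the paper's approach --- but your proposal stops exactly where the work lies, and it misses the one idea that makes the paper's proof short. The paper takes the swapping collineation to be the homography $\zeta$ whose matrix $H$ simply interchanges the first two coordinates, i.e.\ swaps the two carriers $(1,0,0)$ and $(0,1,0)$ of the splash of Theorem~\ref{thm:Bpi}. Since $HK$ and $K$ (the matrix of Theorem~\ref{thm:Bpi}) agree in their last two columns, $\zeta$ fixes the \orsl\ $b=\{K(0,y,z)^t\st y,z\in{\mathbb F}_q\}$ pointwise; it visibly fixes $\li$ and $\ES$; and $\zeta(\Bpi)\neq\Bpi$ (check the image of $K(1,0,0)^t$), so by Theorem~\ref{two-subplanes} the pair $\{\Bpi,\zeta(\Bpi)\}$ is precisely the pair $\{\pi_1,\pi_2\}$ of the statement. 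The decisive step --- which replaces your unexecuted ``finite, mechanical computation'' comparing $s_P$ against the $\pi_1$-\psline s --- is that this same $\zeta$ is already known, from the proof of \cite[Theorem 5.3]{BJ-ext1}, to interchange the two families of \orsls\ of $\ES$. Since $\zeta$ carries $\pi_1$-\psline s to $\pi_2$-\psline s (pencil-sublines are defined intrinsically from the subplane, and $\zeta$ fixes $\li$), the family swap is immediate, with no subline ever computed. In your write-up neither $\pi_2$ is coordinatised nor the comparison carried out --- you yourself flag this as ``the main obstacle'' --- so as it stands the proposal is a plan rather than a proof; note also that your remark that ``no formula is handed to us'' is not quite right, since the carrier-swap $H$ is the obvious candidate and does the job.

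A further caution about your proposed ``cleaner route'' in $\PG(6,q)$: within this paper it is circular. The cover interchange you want to invoke is Theorem~\ref{two-covers-conics}, whose proof is built on Theorem~\ref{interchange-sl} (the very statement at issue) together with \cite[Theorem 7.2]{BJ-ext2}; so unless you prove the interchange of $\EC$ and $\ET$ independently, that route assumes what is to be shown. You partially acknowledge this (``the price of this route is that it rests on the cover-interchange''), but the dependence runs the wrong way in this paper's development, so the first route --- completed by exhibiting $H$ and citing \cite[Theorem 5.3]{BJ-ext1} for the family swap --- is the one to pursue.
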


\begin{proof}
By Theorem~\ref{transsplashes}, we can 
without loss of generality assume one of the \orsps\ is 
$\Bpi$. Recall that in $\PG(2,q^3)$, we construct $\Bpi$ from $\pi_0=\PG(2,q)$ by applying the homography of Theorem~\ref{thm:Bpi} with matrix $K$. 
We construct a second \orsp\ using 
 the homography $\psi$ of $\PG(2,q^3)$ with matrix
 $K'=HK$, where
$$H= \begin{pmatrix}
0&1&0\\ 1&0&0\\ 0&0&1
\end{pmatrix},
\quad{\rm and\ so\ }\quad
K'=\begin{pmatrix}-\tau^q&1&0\\ -\tau&1&0\\ \tau\tau^q&-\tau-\tau^q&1\end{pmatrix}. $$
The collineation $\psi$ maps $\pi_0$ to an \orsp\ $\pi$. Further $\Bpi$ and $\pi$ have the same  exterior splash $\ES$ on $\li$.
Also note that both $\Bpi$ and $\pi$ share the \orsl\ $\{K(0,y,z)^t\st y,z\in\mathbb F_q\}$.  Finally the point with coordinates $K(1,0,0)^t=(-\tau,-\tau^q,\tau\tau^q)^t$ lies in $\Bpi$ but not in $\pi$, so $\Bpi\neq \pi$. 

Note that the homography $\zeta$ with matrix $H$ interchanges $\Bpi$ and $\pi$. Further, by 
the proof of \cite[Theorem 5.3]{BJ-ext1}, $\zeta$ fixes $\ES$ and interchanges the two families of \orsls\ of $\ES$. Hence the $\Bpi$-\psline s of $\ES$ are the $\pi$-\dcsline s of $\ES$, and conversely.
\end{proof}

We now show that if $\pi_1$, $\pi_2$ are the two distinct exterior \orsps\ with common exterior splash and common \orsl, then
the roles of the conic cover and tangent cover are interchanged for $\pi_1$ and $\pi_2$.

\begin{theorem}\Label{two-covers-conics} 
Let $\pi_1,\pi_2$ be the two exterior \orsps\ of $\PG(2,q^3)$ with a common exterior splash $\ES$  and containing a common \orsl\ $b$.
Then the conic cover of $\pi_1$ is the tangent cover of $\pi_2$ (and conversely, the tangent cover of $\pi_1$ is the conic cover of $\pi_2$).
\end{theorem}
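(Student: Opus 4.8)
The plan is to reduce the statement to a single inequality between transversal lines, using the structural fact recorded in Section~\ref{sec:intro-ext} that an exterior splash $\ES$ has exactly two covers. Since $\pi_1$ and $\pi_2$ share the splash $\ES$, their conic and tangent covers are these two covers in some order, so the interchange of roles is \emph{equivalent} to the single assertion that the conic cover of $\pi_1$ differs from the conic cover of $\pi_2$. By Theorem~\ref{transsplashes}, together with the regular action of $I=\PGL(3,q^3)_{\Bpi,\li}$ on the lines of $\Bpi$ (so that the common \orsl\ may be taken to be the one of Theorem~\ref{interchange-sl}), and since the statement is symmetric in $\pi_1,\pi_2$, I would take $\pi_1=\Bpi$ to be the \orsp\ coordinatised in Theorem~\ref{thm:Bpi} and $\pi_2=\pi$ to be the second \orsp\ built in the proof of Theorem~\ref{interchange-sl}, so that $\pi_2=\zeta(\pi_1)$ for the homography $\zeta$ with matrix $H=\begin{pmatrix}0&1&0\\1&0&0\\0&0&1\end{pmatrix}$. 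Recall from that proof that $\zeta$ fixes $\li$ and fixes $\ES$ setwise; morally, the interchange of the two \orsl-families in Theorem~\ref{interchange-sl} is the shadow of the cover-interchange I am trying to prove.

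Next I would transport $\zeta$ to $\PG(6,q)$. As $\zeta$ fixes $\li$, it induces a collineation $[\zeta]$ fixing $\si$ and preserving the spread $\S$; since $\zeta$ merely swaps the first two coordinates, $[\zeta]$ is the block-swap collineation (again with matrix $H$), and the same matrix describes it in the cubic extension $\PG(6,q^3)$, where it commutes with the Frobenius map because $H$ has entries in $\mathbb F_q$. Because $\zeta$ sends $\pi_1$ to $\pi_2$ and fixes $\li$, it carries the $(\pi_1,\li)$-special conics onto the $(\pi_2,\li)$-special conics, which by Theorem~\ref{two-specials} are the $\EC$-special twisted cubics of $\pi_1$ and of $\pi_2$ respectively. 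Hence $[\zeta]$ maps the conic cover of $\pi_1$ onto the conic cover of $\pi_2$, and in particular sends the transversal $\gc$ of the conic cover of $\pi_1$ to a transversal of the conic cover of $\pi_2$. Consequently the two conic covers coincide only if $[\zeta](\gc)\in\{\gc,\gc^q,\gc^{q^2}\}$, so the whole theorem reduces to ruling this out.

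This last transversal computation is the real content, and I expect it to be the main obstacle. Using the explicit description recalled in the proof of Theorem~\ref{two-specials}, namely $\gc=\langle A_1,A_2^q\rangle$ with $A_1=(p_0,p_1,p_2,0,0,0,0)$, $A_2=(0,0,0,p_0,p_1,p_2,0)$, $p_0=t_1+t_2\tau-\tau^2$, $p_1=t_2-\tau$, $p_2=-1$, I would compute $[\zeta](\gc)=\langle HA_1,HA_2^q\rangle=\langle A_2,A_1^q\rangle$ and compare it against $\gc=\langle A_1,A_2^q\rangle$, $\gc^q=\langle A_1^q,A_2^{q^2}\rangle$ and $\gc^{q^2}=\langle A_1^{q^2},A_2\rangle$. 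In each of the three cases, equality of the two lines forces the vector $(p_0,p_1,p_2)$ to be proportional over $\mathbb F_{q^3}$ to one of its Frobenius images; since the third coordinate $p_2=-1$ is fixed by Frobenius, such a proportionality would have scalar $1$ and hence would require $p_1=t_2-\tau$ to lie in $\mathbb F_q$. As $p_1\notin\mathbb F_q$, none of the three coincidences can hold, so $[\zeta](\gc)\notin\{\gc,\gc^q,\gc^{q^2}\}$ and the conic covers of $\pi_1$ and $\pi_2$ are distinct. Since $\ES$ has only two covers, the conic cover of $\pi_2$ is therefore the tangent cover of $\pi_1$, and conversely, as required. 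The delicate point is guaranteeing that $[\zeta]$ moves $\gc$ off its own Frobenius orbit rather than merely permuting the three conjugate transversals of $\pi_1$'s conic cover among themselves.
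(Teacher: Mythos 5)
Your proposal is correct, and it takes a genuinely different route from the paper's own proof. Both arguments make the same initial reduction to the pair $\Bpi$ and $\pi=\zeta(\Bpi)$, where $\zeta$ is the homography with matrix $H$ from the proof of Theorem~\ref{interchange-sl}. From there the paper argues synthetically through the subline families: it fixes a point $P$ on the common \orsl, uses the tangent plane at $P$ and \cite[Theorem 7.2]{BJ-ext2} to attach a cover plane of $\ET_\Bpi$ to the $\Bpi$-\psline\ of $\ES$ at $P$, and then quotes the \emph{conclusion} of Theorem~\ref{interchange-sl} (that $\zeta$ swaps the two families of \orsls\ of $\ES$) to deduce $\EC_\Bpi=\ET_\pi$. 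You never use that family interchange, only the construction of $\pi$ from its proof. Instead you let the special-conic machinery of Section~\ref{sec:sp-con} carry the load: since $\zeta$ conjugates $\PGL(3,q^3)_{\Bpi,\li}$ to $\PGL(3,q^3)_{\pi,\li}$ and hence carries the three fixed points across (this conjugation step deserves one explicit sentence in a final write-up), $\zeta$ maps $(\Bpi,\li)$-special conics to $(\pi,\li)$-special conics, so by Theorems~\ref{conictonrc} and~\ref{two-specials} and Corollary~\ref{s-con-in-distinct} the induced map $[\zeta]$ carries $\EC_\Bpi$ onto $\EC_\pi$ and $\gc$ onto a transversal of $\EC_\pi$; uniqueness of the transversal triple then reduces the theorem, via the ``exactly two covers'' fact from Section~\ref{sec:intro-ext}, to the single verification $[\zeta](\gc)=\langle A_2,A_1^q\rangle\notin\{\gc,\gc^q,\gc^{q^2}\}$. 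That verification is sound: the block structure of the coordinates forces the relevant proportionality between $(p_0,p_1,p_2)$ and a Frobenius image, the entry $p_2=-1$ pins the scalar to $1$, and each of the three cases then forces $\tau$, $\tau^q$ or $\tau^{q^2}$ into ${\mathbb F}_{q}$, which is absurd for a primitive $\tau$. The trade-off: the paper's route is coordinate-free past the setup and exhibits the cover swap as the direct shadow of the subline-family swap, while yours costs the explicit transversal coordinates of \cite[Theorem 6.3]{BJ-ext2} (already quoted in the proof of Theorem~\ref{two-specials}) but is logically independent of Theorem~\ref{interchange-sl}'s conclusion, and it makes explicit the two-covers dichotomy that the paper's proof uses only tacitly when upgrading $\EC_\Bpi=\ET_\pi$ to the full interchange.
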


\begin{proof} 
By Theorem~\ref{transsplashes},  we can without loss of generality prove this for the two \orsps\ $\Bpi,\pi$ used in the proof of Theorem~\ref{interchange-sl}.
Label the conic cover and tangent cover of $\Bpi$ by $\EC_\Bpi$ and $\ET_\Bpi$ respectively, and the conic cover and tangent cover of $\pi$ by $\EC_\pi$ and $\ET_\pi$ respectively.

By \cite[Theorem 2.2]{BJ-ext2}, we can without loss of generality prove the result for the point $P=(0,0,1)$, which lies in the common \orsl\ $\{K(0,y,z)^t\st y,z\in\mathbb F_q\}$ of $\Bpi$ and $\pi$.  The tangent plane $\TP$ at $P=(0,0,1)$ is defined in \cite[Theorem 4.2]{BJ-ext2}, it is a plane  in $\PG(6,q)$  through $P$ that meets $\si$ in a line $\ell=\TP\cap\si$ lying in a cover plane $\alpha$ of the tangent cover $\ET_\Bpi$ of $\Bpi$. 
By \cite[Theorem 7.2]{BJ-ext2},  the line $\ell$ lies in a unique 2-regulus of $\ES$ which corresponds to an \orsl\ $d$ of $\ES\subset\li$ in $\PG(2,q^3)$. Further, $d$ is the $\Bpi$-\psline\ of $\ES$  arising from the intersection of  the lines of $\Bpi$ through $P$ with $\li$.

Now consider the homography $\zeta$ with matrix $H$ defined in the proof of Theorem~\ref{interchange-sl}. As $\zeta$  interchanges the two families of \orsls\ of $\ES$, $d'=\zeta(d)$ is a $\Bpi$-\dcsline\ of $\ES$. Hence by 
\cite[Theorem 7.2]{BJ-ext2}, in $\PG(6,q)$, $[\zeta]$ maps  the cover plane $\alpha$ of $\ET_\Bpi$ to a cover plane $\beta$ of $\EC_\Bpi$.
Further, by Theorem~\ref{interchange-sl},  $d'=\zeta(d)$ is  
 a $\pi$-\psline\ of $\ES$. Note that  $\zeta$ fixes $P$,   so  $d'$ is the \orsl\  arising from the intersection of  the lines of $\pi$ through $P$ with $\li$. 
Hence by 
\cite[Theorem 7.2]{BJ-ext2}, $\beta$ is a plane of the tangent cover $\ET_\pi$ of $\pi$. 
That is, $\EC_\Bpi=\ET_\pi$, as required. 
 \end{proof}

\subsection{Special conics of the two subplanes}\Label{sec:two-orsp-sp-con}

Let $\pi_1,\pi_2$ be the  two exterior \orsps\ of $\PG(2,q^3)$ with common exterior splash $\ES$, and a common \orsl\ $b$ (recall these exist by Theorem~\ref{two-subplanes}).  
There is no obvious relationship between the \orsls\ of $\pi_1$ and the \orsls\ of $\pi_2$, although we
 can pair them up by matching \orsls\ from each subplane that share a common point of $\ES$.
However, there is a nice geometric relationship between certain special conics of $\pi_1$ and  $\pi_2$. 
In particular, there are $q+1$ conics in $\PG(2,q^3)$ that meet both $\pi_1$ and $\pi_2$ in special conics. 

\begin{theorem}\Label{unique-sp-conic-tgt}
Let $\pi_1$, $\pi_2$ be the two exterior  \orsps\ with common exterior splash  and a common \orsl\ $b$. 
Let $\C$ be a $(\pi_1,\li)$-special conic of $\pi_1$ that is tangent to $b$. Then the extension of $\C$ to $\PG(2,q^3)$ meets $\pi_2$ in a $(\pi_2,\li)$-special conic.
\end{theorem}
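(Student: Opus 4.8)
The plan is to reduce to the explicit pair of subplanes used in the proof of Theorem~\ref{interchange-sl} and to exploit the involution that interchanges them. By Theorem~\ref{transsplashes} I may take $\pi_1=\Bpi$ and $\pi_2=\pi$, so that the homography $\zeta$ with matrix $H$ swaps $\pi_1$ and $\pi_2$, fixes $\li$, and fixes the common splash $\ES$. Since the common \orsl\ $b=\{K(0,y,z)^t:y,z\in{\mathbb F}_{q}\}$ lies on the line $[1,-1,0]$, which is pointwise fixed by $\zeta$ (it is the eigenspace of $H$ for the eigenvalue $1$), the collineation $\zeta$ fixes $b$ pointwise; in particular it fixes the tangent point $P=\C\cap b$.

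First I would show that $\zeta$ carries $(\pi_1,\li)$-special conics to $(\pi_2,\li)$-special conics. Indeed $\zeta$ conjugates $I_1=\PGL(3,q^3)_{\pi_1,\li}$ to $I_2=\PGL(3,q^3)_{\pi_2,\li}$ and hence maps the three fixed points of $I_1$ to those of $I_2$, so that a conic of $\pi_1$ through the former is sent to a conic of $\pi_2$ through the latter. Recalling that there is a unique $(\pi_1,\li)$-special conic tangent to $b$ at $P$, its image $\C'=\zeta(\C)$ is then the unique $(\pi_2,\li)$-special conic tangent to $b=\zeta(b)$ at $P=\zeta(P)$.

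The key step is to prove that the extension of $\C$ to $\PG(2,q^3)$ is itself fixed by $\zeta$, that is $\C=\C'$. Granting this, $\C\cap\pi_2=\zeta(\C)\cap\zeta(\pi_1)=\zeta(\C\cap\pi_1)$ is the $\zeta$-image of the $(\pi_1,\li)$-special subconic $\C\cap\pi_1$, hence a $(\pi_2,\li)$-special conic, which is exactly the assertion. To verify $\C=\C'$ I would pass back to $\pi_0=\PG(2,q)$: as $\sigma$ (matrix $K$, from Theorem~\ref{thm:Bpi}) fixes $P=(0,0,1)$ and carries the special conics of $\pi_0$ through $P$ to those of $\pi_1$ through $P$, the conic tangent to $b$ is $\C=\sigma(\C_0)$, where $\C_0\colon y^2=xz$ is the unique special conic of $\pi_0$ tangent to the line $x=0$ at $P$; correspondingly $\C'=\zeta\sigma(\C_0)=\psi(\C_0)$. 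Parametrising $\C_0$ by $(1,\theta,\theta^2)$ and applying $K$ and $K'=HK$, I would write down the two defining quadratic forms and compare coefficients.

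I expect this coefficient comparison to be the main obstacle. A priori $\C$ and $\C'$ share only the two carriers, the point $P$, and a common tangent line at $P$; these four conditions place both conics in a single pencil but do not by themselves force them to coincide, and since two distinct conics of $\PG(2,q^3)$ meet in at most four points while a special subconic has $q+1$ points, the coincidence $\C=\C'$ must genuinely be established rather than merely counted. I anticipate that the decisive simplification comes from the symmetry of $\C_0\colon y^2=xz$ under the coordinate swap encoded by $H$, together with the identities $\tau+\tau^q+\tau^{q^2}=t_2$ and the other relations among the conjugates of $\tau$ used throughout Section~\ref{sec:sp-con}; this is also the point at which any characteristic-dependent behaviour of the tangency would surface.
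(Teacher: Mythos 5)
Your reduction is set up soundly: $\zeta$ does fix $b$ pointwise (the common \orsl\ lies on the line $x=y$, which $H$ fixes pointwise), $\zeta$ does carry $(\pi_1,\li)$-special conics to $(\pi_2,\li)$-special conics, and $\zeta(\C)$ is indeed the unique $(\pi_2,\li)$-special conic tangent to $b$ at $P$. But the identity $\C=\zeta(\C)$ on which your whole argument rests is \emph{false for odd $q$}, and the coefficient comparison you postponed settles this in two lines, against you. The extension of $\C=\sigma(\C_0)$ has points $K(1,\theta,\theta^2)^t=\big(\theta-\tau,\ \theta-\tau^q,\ (\theta-\tau)(\theta-\tau^q)\big)$, whence its equation is $(\tau-\tau^q)xy+xz-yz=0$; its $\zeta$-image (swap $x\leftrightarrow y$) is $(\tau-\tau^q)xy-xz+yz=0$, and the two forms are proportional if and only if $2(\tau-\tau^q)=0$, i.e.\ iff $q$ is even. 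Equivalently, the map induced on $\pi_0$-coordinates is $M=K^{-1}HK$, which sends $(x,y,z)$ to $(x,\ sx-y,\ px-z)$ with $s=\tau+\tau^q$, $p=\tau^2+\tau^{2q}$, and $M(1,\theta,\theta^2)^t=(1,\ s-\theta,\ p-\theta^2)^t$ lies on $y^2=xz$ iff $2(\theta-\tau)(\theta-\tau^q)=0$; so the ``symmetry of $y^2=xz$ under the swap'' that you hoped would save the computation is precisely a characteristic-two phenomenon, as you half-suspected. Worse, the same parametrisation shows your target statement cannot be rescued by another argument when $q$ is odd: $K(1,\theta,\theta^2)^t\in\pi_2$ forces $s-\theta\in{\mathbb F}_{q}$ and $p-\theta^2\in{\mathbb F}_{q}$; the first gives $\theta=c-\tau^{q^2}$ with $c\in{\mathbb F}_{q}$, and the second then reduces to $2c\tau^{q^2}-2\tau^{2q^2}\in{\mathbb F}_{q}$, impossible for odd $q$ since $1,\tau^{q^2},\tau^{2q^2}$ are ${\mathbb F}_{q}$-independent. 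So for odd $q$ the extension of $\C$ meets $\pi_2$ in the single point $P$ (the four-point contact $\car_1,\car_2,P$ (doubly) that you correctly identified is the entire intersection $\C\cap\zeta(\C)$), while for even $q$ one gets exactly $\psi(\C_0)$, a $(\pi_2,\li)$-special conic.

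For comparison, the paper's proof is synthetic and avoids $\zeta$ entirely: for each $P_i\in b$ it takes the unique conic $\C_i$ of $\PG(2,q^3)$ through the carriers $\car_1,\car_2$, through the ``third conjugate point'' $\car_3$ of $\ES$, and tangent to $b$ at $P_i$, and asserts that $\C_i$ meets each of $\pi_1,\pi_2$ in a special conic. That argument tacitly requires the third fixed point of $\PGL(3,q^3)_{\pi_j,\li}$ to be the \emph{same} for $j=1,2$; in your coordinates these third points are $\sigma(\car^{q^2})\equiv(a_1,a_2,a_1a_2)$ and $\psi(\car^{q^2})\equiv(a_2,a_1,a_1a_2)$ with $a_1=\tau^{q^2}-\tau$, $a_2=\tau^{q^2}-\tau^q$, which are never equal, and the extension of $\C$ contains the latter iff $2(\tau-\tau^q)a_1a_2=0$, again iff $q$ is even. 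So completing your deferred computation does more than expose the gap in your own proposal: it shows that both your route and the paper's route close only in even characteristic, and it puts the odd-$q$ case of the statement itself in doubt. As it stands, your proposal is not a proof; carried to completion, it yields the $q$ even case and an apparent counterexample (the special conic of $\Bpi$ tangent to $b$ at $P=(0,0,1)$) for $q$ odd.
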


\begin{proof} Let $\pi_1,\pi_2$ be two exterior \orsps\ with common exterior splash $\ES$ and \orsl\ $b$ (these exist by Theorem~\ref{two-subplanes}).  
Let 
 $\ES$ have carriers $\car_1,\car_2$, and third conjugate point $\car_3$.
For each point $P_i\in b$, $i=1,\ldots,q+1$, there is a unique conic $\C_i$ of $\PG(2,q^3)$ that contains  $\car_1,\car_2,\car_3$ and $P_i$, and is tangent to $b$ at the point $P_i$.
The conic $\C_i$ meets the \orsp\ $\pi_1$ (respectively $\pi_2$) in a conic that contains $P_i$,  is tangent to $b$ at $P_i$, and contains $\car_1,\car_2,\car_3$ (in the extension to $\PG(2,q^3)$). 
Hence for $i=1,\ldots,q+1$, $\C_i\cap\pi_1$ is a $(\pi_1,\li)$-special conic of $\pi_1$, and $\C_i\cap\pi_2$ is a $(\pi_2,\li)$-special conic of $\pi_2$.
\end{proof}

\subsection{Construction of two subplanes in $\PG(6,\r)$}
\Label{sec:construct}

In $\PG(2,q^3)$, let 
 $\ES$ be an exterior splash of $\li$, and let $b$ be an exterior \orsl\ whose extension to $\PG(2,q^3)$ meets $\ES$. By Theorem~\ref{two-subplanes}, there are exactly two distinct exterior \orsp s  $\pi_1,\pi_2$ with exterior splash $\ES$ containing $b$.  
  We now give a geometric construction in the $\PG(6,q)$ setting of these  two \orsps\ $\pi_1,\pi_2$.  Note that by  Theorem~\ref{two-covers-conics}, one of the covers of $\ES$ acts as the conic cover for $\pi_1$, and the other cover of $\ES$ acts as the conic cover for $\pi_2$. 
The next construction uses  each  set of cover planes of $\ES$ to construct one of the \orsps\ containing $b$ with exterior splash $\ES$.

\begin{construction}
In $\PG(2,q^3)$, $q\geq3$, let 
 $\ES$ be an exterior splash of $\li$, let $\ell$ be a line through a point $L$ of\, $\ES$, and let $b=\{P_1,\ldots,P_{q+1}\}$ be an \orsl\ of $\ell$ that is exterior to $\li$. Let $\mathbb{X,Y}$ be the two covers of\, $\ES$. In $\PG(6,q)$, for $i,j\in\{1,\ldots,q+1\}$, let
\begin{enumerate}
\item 
$L_{ij}=P_iP_j\cap[L]$, $i\neq j$, 
\item  $L_{ii}=m_i\cap[L]$, where $m_i$ is the  unique tangent to the $\ES$-special twisted cubic $[b]$ at the point $P_i$, 
\item  $\alpha_{ij}$ be the unique cover plane of the cover $\EX$ through the point $L_{ij}$, 
\\ $\beta_{ij}$ be the unique cover plane of the cover $\EY$ through the point $L_{ij}$, 
\item $\Sigma_{ij}=\langle\alpha_{ij},P_i\rangle$, $\Gamma_{ij}=\langle\beta_{ij},P_i\rangle$.
\end{enumerate}
Then the two exterior \orsps\ $\pi_1,\pi_2$ with common exterior splash $\ES$ and \orsl\ $b$ are constructed as follows.
\begin{enumerate}
\item[I.]
 $[\pi_1]$ consists of the points  $\Sigma_{ij}\cap\Sigma_{kl}$ for $i,j,k,l\in\{1,\ldots,q+1\}$, $\{i,j\}\ne\{k,l\}$. 
\item[II.]
 $[\pi_2]$ consists of the points  $\Gamma_{ij}\cap\Gamma_{kl}$ for $i,j,k,l\in\{1,\ldots,q+1\}$, $\{i,j\}\ne\{k,l\}$. 
 \end{enumerate}
\end{construction}

\begin{proof} 
Let $\pi_1,\pi_2$ be the two \orsps\ with exterior splash $\ES$ containing $b$ (which exist by   
Theorem~\ref{two-subplanes}).  
 By Theorem~\ref{two-covers-conics}, one of the covers, $\mathbb X$ say, acts as the conic cover for $\pi_1$, and the other cover $\mathbb Y$ acts as the conic cover for $\pi_2$. 
 We prove the construction for $[\pi_1]$, and $[\pi_2]$ is similar. 
Note that the $q+1+{{q+1}\choose2}$ points $L_{ij}$, $1\leq i\leq j\leq q+1$,  are distinct by \cite[21.1.9]{hirs85}.
Hence the $q+1+{{q+1}\choose2}$  $3$-spaces $\Sigma_{ij}$, $1\leq i\leq j\leq q+1$,   are distinct. 

We first show that each $3$-space $\Sigma_{ij}$ meets $[\pi_1]$ in a set of points that corresponds to a $(\pi_1,\li)$-special conic. 
Fix $i$, and  let $j\in\{1,\ldots,q+1\}$, and firstly suppose $j\neq i$. In $\PG(2,q^3)$, the two points $P_i,P_j$ of $b$ lie in a unique $(\pi_1,\li)$-special conic of $\pi_1$, denoted $\C_{ij}$.
In $\PG(6,q)$, $[\C_{ij}]$ is an $\EX$-special twisted cubic in a $3$-space $\Pi_{ij}$
that contains the line $P_iP_j$, and a plane of $\EX$ (the conic cover of $\pi_1$), so $\Pi_{ij}$ contains the plane $\alpha_{ij}$, that is, $\Pi_{ij}=\Sigma_{ij}$.
Now consider the case $j=i$. 
For each point $P_i\in b\subset\PG(2,q^3)$, there is a unique $(\pi_1,\li)$-special conic $\C_i$ of $\pi_1$ that is tangent to $b$ at the point $P_i$. 
In $\PG(6,q)$, $[\C_i]$ is an $\EX$-special twisted cubic in a $3$-space $\Pi_i$ about a plane of $\EX$. By Theorem~\ref{tgtpleqconictgtpl},  the line $P_iL_{ii}$ is in the $3$-space $\Pi_i$, hence $\Pi_i=\Sigma_{ii}$.
Hence for each $i,j$, $\Sigma_{ij}$ meets $[\pi_1]$ in a set of points that corresponds to a $(\pi_1,\li)$-special conic which contains one or two points of $b$.

We now show that every point in $\pi_1$ lies on at least two $(\pi_1,\li)$-special conics that meet $b$. 
This is clearly true for each point $P_i\in b$.
Let $P$ be a point of $\pi_1\setminus b$. For each point $P_i\in b$, there is a unique $(\pi_1,\li)$-special conic through $P$ and $P_i$. These special conics are not necessarily distinct, but there are at least $(q+1)/2$ distinct ones. As $q\geq 3$, the point $P$ lies on at least two $(\pi_1,\li)$-special conics that meet $b$. Hence in $\PG(6,q)$, the point $P$ lies in at least two of the $3$-spaces $\Sigma_{ij}$. 
Note that the $\Sigma_{ij}$ pairwise meet in a point, hence by determining all the intersections  $\Sigma_{ij}\cap\Sigma_{kl}$,  we construct all the points of $\pi_1$. 
\end{proof}

We note that there does not appear to be a corresponding construction in $\PG(2,q^3)$, as it is difficult to distinguish between the points of $\pi_1$ and $\pi_2$. 
With the aim of producing a simpler construction of the exterior \orsp, we very briefly discuss another way of characterising the \orsps\ in $\PG(6,q)$.
This time we work in the quadratic extension $\PG(6,q^2)$ of $\PG(6,q)$, so that we can use the 
 {\em chords} of a twisted cubic (see \cite{hirs85}).
  For a point $P\in\PG(6,q^2)$, let $P^\sigma$ denote the conjugate of $P$ arising from the Frobenius automorphism $\sigma(x)= x^q$ in $\GF(q^2)$.
Let $\N$ be a twisted cubic in a 3-space $\Sigma$. There are three types of chords of $\N$ in $\Sigma$: 
  real chords $PQ$  for $P,Q\in\N$, $P\neq Q$; imaginary chords  $PP^\sigma$ for $P\in\N^*\setminus\N$; and one tangent chord  for each $P\in\N$. Note that every point of $\Sigma$ lies on a unique chord of $\N$, see \cite{hirs85}.

Let $b$ be an exterior \orsl\ of $\PG(2,q^3)$, so by  Theorem~\ref{sublinesinBB}, $\N=[b]$ is an $\S$-special twisted cubic in a 3-space about a plane $\alpha\in\S$. 
We can uniquely label the points of the plane $\alpha$ using the $q^2+q+1$ \emph{pairs} denoted $(P,Q)$, $(P,P^\sigma)$, $(P,b)$ as follows.  A point $R\in\alpha$ lies on a unique chord of $\N$. We label $R$ by  the pair: $(P,Q)$ if $R$ lies on the real chord $PQ$ of $\N$; 
$(P,P^\sigma)$ if $R$ lies on the imaginary chord $PP^{\sigma}$ of $\N$; and $(P,b)$ if $R$ lies on the tangent  $b$ to $\N$ through $P$.
Hence if $\pi$ is an \orsp\ of $\PG(2,q^3)$ with exterior splash $\ES$, then we have a unique labelling in $\PG(6,q)$ for each point in each plane of $\ES$.

We consider the interpretation of this pair labelling in planes of
the covers $\EC$ and $\ET$ of $\pi$. 
The interpretation in planes of  the conic cover $\EC$  is straightforward: Let $\beta\in\EC$, then by the results of Section~\ref{sec:sp-con}, there is a unique $(\pi,\li)$-special conic $\C$ in $\pi$  corresponding to $\beta$. The points in $\beta$ are  labelled by pairs as follows. 1. Pairs $(P,Q)$ for $P,Q\in\C$. 2. Pairs $(P,P^\sigma)$ for points $P\in\C^*\setminus\C$. 3.  Pairs  $(P,\ell)$ for $P\in\C$ and $\ell$ a line of $\pi$ through $P$. Note that  Theorem~\ref{tgtpleqconictgtpl} ensures that the labelling in 3 works. 
The interpretation in planes of  the tangent cover $\ET$ is  more difficult, and we leave this as an open question.


\section{Intersection of two exterior splashes}\Label{sec:int-ext}

In this section we investigate the intersection of two exterior splashes. First recall from Section~\ref{sec:orsl-common-splash} that an exterior splash $\ES$ of $\li$ in $\PG(2,q^3)$ contains  $2(q^2+q+1)$ \orsls\ that lie in two distinct {\em families} of size $q^2+q+1$. Note that two distinct  \orsls\ in the same family meet in exactly one point, and \orsls\ in different families meet in 0,1 or 2 points. 

Ferret and Storme 
 \cite[Lemma 2.3]{ferr03} showed that two exterior splashes meet in at most $2q+2$ points. Lavrauw and Van de Voorde \cite[Remark 24]{lavr10} show that this bound is tight by exhibiting an example of two exterior splashes who intersection has size $2q+2$, this intersection  consists of two disjoint \orsls, one from each family. 

In this section, we show in Theorem~\ref{two-orsls} that two exterior splashes cannot meet in two \orsls\ of the same family. Then in Theorem~\ref{max-int-ext}, we characterise the maximal intersection of two exterior splashes by showing that if the intersection has size $2q+2$, then it consists of two disjoint \orsls, one from each family.

We need the next lemma which follows  from \cite[Theorem 7.2]{BJ-ext2}.

\begin{lemma}\Label{L:prop-cover}
Let $\R$ be a $2$-regulus contained in an exterior splash $\ES$.
\begin{enumerate}
\item Each of the $q^2+q+1$ ruling lines of $\R$ lie in a unique cover plane of $\ES$, further these $q^2+q+1$ cover planes are in the same cover of $\ES$.
\item Each line  in a cover  plane of $\ES$ 
 is the ruling line for one $2$-regulus contained in $\ES$. Further, the $q^2+q+1$  $2$-reguli constructed from the lines in one cover plane correspond to  the $q^2+q+1$ \orsls\ in one family of $\ES$.
 \item Let $\ell,m$ be ruling lines for distinct $2$-reguli contained $\ES$. If $\ell$ and $m$ meet, then $\langle \ell,m\rangle$ is a cover plane of $\ES$. 
\end{enumerate}
\end{lemma}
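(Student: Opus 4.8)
The plan is to derive all three parts from Theorem~\ref{sublinesinBB}(1), which identifies the $2$-reguli contained in $\ES$ with the \orsls\ of $\ES$, together with the incidence properties of the hyper-regulus triple $\ES,\EX,\EY$ (two planes in a common cover are disjoint, while two planes from different members of the triple meet in a unique point), and the detailed ruling-line/cover-plane correspondence of \cite[Theorem 7.2]{BJ-ext2}. I would also record the structural fact that, since any two \orsls\ of one family meet in exactly one point and there are $q^2+q+1$ of them on the $q^2+q+1$ points of $\ES$, each family carries the structure of a $\PG(2,q)$ on $\ES$; in particular exactly $q+1$ \orsls\ of each family pass through each point of $\ES$.

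For part (1), I would first observe that a ruling line $\ell$ of $\R$ meets each of the $q+1$ planes of $\R$ in one point, and since these planes are disjoint and $\ell$ has only $q+1$ points, every point of $\ell$ lies in a plane of $\R\subseteq\ES$. By \cite[Theorem 7.2]{BJ-ext2}, $\ell$ lies in a cover plane, and the cover to which that plane belongs is the one matched (via Theorem~\ref{sublinesinBB}(1)) to the family of the \orsl\ $b$ corresponding to $\R$; as this is independent of the chosen ruling line, all $q^2+q+1$ ruling lines lie in planes of the \emph{same} cover. For uniqueness, suppose $\ell$ lay in two distinct cover planes $\alpha,\alpha'$. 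These are distinct members of the triple $\ES,\EX,\EY$, hence are either disjoint or meet in a single point, so their intersection cannot contain the line $\ell$; thus the cover plane containing $\ell$ is unique.

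For part (2), the first assertion, that every line in a cover plane is a ruling line of a unique $2$-regulus of $\ES$, is exactly the statement extracted from \cite[Theorem 7.2]{BJ-ext2} (and is the form used in the proof of Theorem~\ref{two-covers-conics}). For the second assertion I would fix a cover plane $\alpha$, say $\alpha\in\EX$, and send each of its $q^2+q+1$ lines first to the $2$-regulus of which it is a ruling line and then to the corresponding \orsl\ of $\ES$. This map is injective: if two lines of $\alpha$ were ruling lines of one regulus $\R_1$, that regulus would have two ruling lines inside the plane $\alpha$, forcing them to meet, contradicting that the ruling lines of $\R_1$ are mutually disjoint. By part (1) each of these reguli has all its ruling lines in the cover $\EX$, so by the family--cover correspondence of \cite[Theorem 7.2]{BJ-ext2} the resulting \orsls\ all lie in a single family; since $\alpha$ has $q^2+q+1$ lines and a family has exactly $q^2+q+1$ \orsls, the injection is a bijection onto that family.

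For part (3), write $R=\ell\cap m$; since distinct reguli have distinct ruling lines (by the uniqueness in part (2)), we have $\ell\neq m$ and $R$ is a single point, so $\gamma:=\langle\ell,m\rangle$ is a plane. As in part (1), every point of a ruling line lies in a plane of its regulus, so the unique spread plane $[L]$ through $R$ belongs to both $\R_1$ and $\R_2$, and by part (1) each of $\ell,m$ lies in a cover plane through $R$. The heart of the matter, and the step I expect to be the main obstacle, is to show that these two cover planes coincide: a priori the two meeting ruling lines could lie in planes of different covers (the unique $\EX$-plane and the unique $\EY$-plane through $R$), in which case $\gamma$ would meet each cover plane in a line and would fail to be a cover plane. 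Ruling this out, equivalently showing that meeting ruling lines of distinct reguli of $\ES$ must belong to a common cover, is precisely where I would invoke the fine structure of \cite[Theorem 7.2]{BJ-ext2} linking the two families of \orsls\ through a point with the cover planes through the corresponding point of $[L]$. Once the two cover planes are identified with a single plane $\delta$, we get $\ell,m\subseteq\delta$, whence $\gamma=\langle\ell,m\rangle=\delta$ is a cover plane of $\ES$, completing the proof.
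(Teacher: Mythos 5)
The first thing to note is that the paper supplies no proof of this lemma at all: it is simply asserted to ``follow from'' \cite[Theorem 7.2]{BJ-ext2}. So for parts (1) and (2) your reconstruction is more explicit than anything in the paper, and it is sound: uniqueness of the cover plane via the fact that two planes from different members of the triple $\ES,\EX,\EY$ meet in at most a point; injectivity of the map from lines of a cover plane to $2$-reguli via the mutual disjointness of the ruling lines of one regulus; and surjectivity onto one family by the count $q^2+q+1$. Your preliminary observation that each family carries a $\PG(2,q)$ structure on the points of $\ES$, so that exactly $q+1$ \orsls\ of each family pass through each point, is also correct and, as it turns out, relevant below.

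The genuine gap is in part (3), and your instinct about where the difficulty lies was exactly right --- but the step you defer to the ``fine structure'' of \cite[Theorem 7.2]{BJ-ext2} cannot be carried out, because the configuration you hope to rule out actually occurs: part (3) as literally stated is \emph{false} when the two reguli correspond to \orsls\ of different families. Indeed, parts (1)--(2) force ruling lines of one family's reguli into planes of one cover and ruling lines of the other family's reguli into planes of the other cover (each line of a cover plane is the ruling line of \emph{exactly one} regulus, so a family cannot use lines from the other cover's planes). Now take \orsls\ $b_1,b_2$ from different families through a common point $L$ of $\ES$; such pairs exist since, by your own projective-plane observation, $q+1$ members of each family pass through $L$. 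The $q^2+q+1$ pairwise disjoint ruling lines of $[b_1]$ each meet $[L]$, hence partition its $q^2+q+1$ points, and likewise for $[b_2]$. So through any point $R\in[L]$ there is a ruling line $\ell$ of $[b_1]$ and a ruling line $m$ of $[b_2]$; here $\ell$ lies in the unique plane $\alpha\in\EX$ through $R$ and $m$ in the unique plane $\beta\in\EY$ through $R$, and since $\alpha\cap\beta=\{R\}$ we have $\ell\neq m$. If $\langle\ell,m\rangle$ were a cover plane, it would be a cover plane through $R$, hence equal to $\alpha$ or $\beta$, forcing $m\subset\alpha$ or $\ell\subset\beta$, which is impossible. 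So the correct statement of (3) requires the hypothesis that the two reguli correspond to \orsls\ of the \emph{same} family (equivalently, that $\ell,m$ lie in planes of the same cover), and under that hypothesis the proof is immediate with no appeal to the citation: the planes of one cover partition the covered point set, so the unique plane of that cover through $R=\ell\cap m$ contains both $\ell$ and $m$ and therefore equals $\langle\ell,m\rangle$. This restricted version is all the paper ever uses: in Theorem~\ref{two-orsls}, $b_1$ and $b_2$ are explicitly taken in the same family. So rather than hoping the cited theorem excludes the mixed-cover configuration, you should add the same-family hypothesis and close the argument as above.
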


\begin{theorem} \Label{two-orsls} 
Let $\ES$ be an exterior splash of $\li$ in $\PG(2,q^3)$, and let $b_1,b_2$ be \orsls\ of $\ES$ that belong to the same family. 
If $\ES'$ is any exterior splash containing $b_1$ and $b_2$, then $\ES'=\ES$.
\end{theorem}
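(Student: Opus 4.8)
The plan is to pass to the Bruck-Bose representation in $\PG(6,q)$, where both $\ES$ and $\ES'$ are sets of $q^2+q+1$ planes of the fixed regular $2$-spread $\S$, and where, by Theorem~\ref{sublinesinBB}, $b_1$ and $b_2$ correspond to $2$-reguli $\R_1=[b_1]$ and $\R_2=[b_2]$ of $\S$. The central idea is that a whole cover of $\ES$ can be reconstructed from the ruling lines of $\R_1$ and $\R_2$ alone; since these ruling lines are intrinsic to the two $2$-reguli and do not refer to any ambient splash, the same planes will serve $\ES'$, and comparing the two reconstructions will force $\ES=\ES'$.

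First I would use the hypothesis. As $b_1,b_2$ lie in the same family of $\ES$, they meet in exactly one point $L$, so $\R_1\cap\R_2=\{[L]\}$, where $[L]\in\ES$ is the corresponding spread plane; note also $L\in b_1\subseteq\ES'$, so $[L]\in\ES'$ as well. The $q^2+q+1$ mutually disjoint ruling lines of $\R_1$ meet $[L]$ in its $q^2+q+1$ points, so through each point $R\in[L]$ there is a unique ruling line $\ell^{(1)}_R$ of $\R_1$, and likewise a unique ruling line $\ell^{(2)}_R$ of $\R_2$. A short count shows $\ell^{(1)}_R\neq\ell^{(2)}_R$: a ruling line has only $q+1$ points and already meets the $q+1$ disjoint planes of its own regulus in all of them, so it cannot also meet the $q$ further planes of the other regulus. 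Hence $\ell^{(1)}_R$ and $\ell^{(2)}_R$ are distinct lines meeting at $R$, and span a plane $\gamma_R=\langle\ell^{(1)}_R,\ell^{(2)}_R\rangle$.

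Next I would identify $\{\gamma_R : R\in[L]\}$ with a cover of $\ES$. By Lemma~\ref{L:prop-cover}(3) applied to $\ES$, each $\gamma_R$ is a cover plane of $\ES$; by part~(1) the planes containing ruling lines of $\R_1$ all lie in a single cover $\EX$, so every $\gamma_R\in\EX$. Since a cover plane meets the spread plane $[L]$ in a single point, $\gamma_R\cap[L]=\{R\}$, so the $\gamma_R$ are pairwise distinct; as there are $q^2+q+1$ of them and $|\EX|=q^2+q+1$, we get $\{\gamma_R\}=\EX$.

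The decisive step is that $\gamma_R$ depends only on $\R_1,\R_2$. Since $\R_1,\R_2$ are also $2$-reguli contained in $\ES'$, Lemma~\ref{L:prop-cover}(3) applied to $\ES'$ shows each $\gamma_R$ is a cover plane of $\ES'$ too. But the $\gamma_R$ are pairwise disjoint (being the planes of the cover $\EX$ of $\ES$); if two of them lay in different covers of $\ES'$ they would have to meet in a point, a contradiction, so all $\gamma_R$ lie in a single cover of $\ES'$, which they then exhaust. Thus $\EX$ is simultaneously a cover of $\ES$ and of $\ES'$. Since a cover determines its splash --- $\ES$ is exactly the set of spread planes covered by $\bigcup_{\gamma\in\EX}\gamma$ --- we conclude $\ES'=\ES$. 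The one point needing care is the final dichotomy (disjoint versus meeting) that pins the $\gamma_R$ into a single cover of $\ES'$; the same-family hypothesis is used only to produce the single common point $L$, and hence the common spread plane $[L]$, around which the whole construction is organised.
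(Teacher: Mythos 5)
Your proof is correct and takes essentially the same route as the paper: both hinge on Lemma~\ref{L:prop-cover}(3) to span a cover plane of the splash from a pair of intersecting ruling lines of the $2$-reguli $[b_1]$ and $[b_2]$, combined with the fact that such a plane meets exactly the spread planes of its splash, so the splash is determined by data intrinsic to $b_1,b_2$. The only difference is economy: the paper constructs a single cover plane $\langle t,u\rangle$ through one point of $[L]$ and concludes at once, whereas you reconstruct the full cover $\{\gamma_R : R\in[L]\}$ and match it against a cover of $\ES'$ --- correct, but more than is needed.
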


\begin{proof} Let $b_1$ and $b_2$ be distinct \orsls\ of $\ES$ in the same family, so $b_1$ and $b_2$ meet in a unique point $P$.   By Theorem~\ref{sublinesinBB}, in $\PG(6,q)$,  $[b_1]$, $[b_2]$ are  2-reguli contained in the regular 2-spread $\S$. Let $Q$ be any point of the spread plane $[P]$, then there is a ruling line $t$ of $[b_1]$ through $Q$, and a ruling line $u$ of $[b_2]$ through $Q$.
By Lemma~\ref{L:prop-cover},  $\langle t,u\rangle$ is a cover plane of $\ES$. 
 Hence the planes of the regular 2-spread $\S$ that meet $\langle t,u\rangle$ are exactly the planes of $\ES$, that is, $\ES$ is completely determined.
\end{proof}

%
%
%
%
%
%
%

\begin{theorem}\Label{max-int-ext}
If two exterior splashes of $\li$ in $\PG(2,q^3)$ meet in $2q+2$ points, then these points correspond to two disjoint \orsl s, one from each family.  
\end{theorem}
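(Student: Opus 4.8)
The plan is to exploit two facts: any three points of $\li$ lie on a unique \orsl, and an \orsl\ not contained in a given exterior splash meets it in at most three points. For the second fact I would argue as follows. By Theorem~\ref{transsplashes} I may take the first splash $\ES_1$ to be the standard splash of Theorem~\ref{thm:Bpi}, which on $\li$ is the norm-one surface $\{k : k^{q^2+q+1}=1\}$, where $N\colon\mathbb{F}_{q^3}\to\mathbb{F}_{q}$ is the norm. An \orsl\ $b$ of $\li$ can be parametrised as $k(t)=(\alpha t+\beta)/(\gamma t+\delta)$ with $t\in\mathbb{F}_{q}\cup\{\infty\}$, and the condition $k(t)\in\ES_1$ becomes $N(\alpha t+\beta)=N(\gamma t+\delta)$, where both sides are polynomials in $t$ over $\mathbb{F}_{q}$ of degree at most three. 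Hence either this identity holds identically, so $b\subseteq\ES_1$, or it has at most three solutions, so $|b\cap\ES_1|\le 3$. The same trichotomy applies to $\ES_2$. In the $\PG(6,q)$ picture this is just the statement that any three planes of $\S$ lie in a unique $2$-regulus contained in $\S$ (by regularity and Theorem~\ref{sublinesinBB}), with the norm condition controlling how many planes of such a regulus lie in a splash.

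Now write $I=\ES_1\cap\ES_2$ with $|I|=2q+2$. One family $\mathcal F_1$ of \orsls\ of $\ES_1$, together with the points of $\ES_1$, forms a projective plane of order $q$: there are $q^2+q+1$ \orsls\ in the family, each of size $q+1$, and two of them meet in a unique point. I regard $I$ as a set of $2q+2$ points of this plane. A line here is an $\mathcal F_1$-\orsl\ $b\subseteq\ES_1$, and since $b\cap I=b\cap\ES_2$, the trichotomy above gives $|b\cap I|\in\{0,1,2,3\}$ unless $b\subseteq I$. Writing $m_i$ for the number of lines meeting $I$ in $i$ points, the standard incidence identities $\sum m_i=q^2+q+1$, $\sum i\,m_i=(2q+2)(q+1)$ and $\sum\binom{i}{2}m_i=\binom{2q+2}{2}$ (each pair of points of $I$ lying on a unique line) then constrain the $m_i$, with $m_i=0$ for $3<i<q+1$. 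The aim is to deduce $m_{q+1}\ge 1$, i.e.\ that $I$ contains a whole $\mathcal F_1$-\orsl; equivalently, that the $(2q+2)$-set $I$ is not a $(2q+2,3)$-arc of $\PG(2,q)$. I would obtain this from the known maximum size of a $(k,3)$-arc in $\PG(2,q)$, treating $q$ even and $q$ odd separately and supplementing, where needed, with the symmetric information coming from the second family $\mathcal F_2$ of $\ES_1$ and from $\ES_2$ itself.

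Applying the same argument to $\mathcal F_2$ produces a common \orsl\ $b_2\in\mathcal F_2$ inside $I$, giving $b_1\in\mathcal F_1$ and $b_2\in\mathcal F_2$ both contained in $I$. By Theorem~\ref{two-orsls}, distinct splashes share at most one \orsl\ from each family, so $b_1,b_2$ are the only common \orsls, one from each family. Finally I would show $b_1\cap b_2=\emptyset$: if they met, then $|b_1\cup b_2|\le 2q+1$, so some $R\in I$ lies outside $b_1\cup b_2$, and counting the $3$-secants of $I$ through $R$ against the incidence identities yields a contradiction. Disjointness then forces $|b_1\cup b_2|=2q+2=|I|$, whence $I=b_1\sqcup b_2$, two disjoint \orsls\ one from each family, as required; the fact that these are one from each family of $\ES_2$ as well follows from Theorem~\ref{two-orsls} applied to $\ES_2$.

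The main obstacle is the existence step. The three incidence identities within a single family are consistent with $m_{q+1}=0$, so they alone do not force a contained \orsl. Overcoming this requires either genuinely coupling the two splash structures on the common set $I$ (using that $I$ lies simultaneously in the order-$q$ planes determined by \emph{both} families of $\ES_1$ and of $\ES_2$) or an appeal to the sharp $(k,3)$-arc bound, which behaves differently for $q$ even and $q$ odd and so threatens a uniform argument. Establishing the disjointness of $b_1$ and $b_2$ uniformly in $q$ is the secondary difficulty.
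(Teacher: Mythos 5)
Your reduction to a plane of order $q$ is sound: each family of \orsls\ of $\ES_1$ does form a projective plane on the points of $\ES_1$ (the pencil-subline family is the dual plane of an associated \orsp), and your norm computation correctly gives the trichotomy that an \orsl\ inside $\ES_1$ meets $\ES_2$ in at most $3$ points or is contained in it. But the pivotal step --- that the $(2q+2)$-set $I=\ES_1\cap\ES_2$ cannot be a $(2q+2,3)$-arc in the $\mathcal F_1$-plane --- is exactly what is missing, and neither of your two proposed rescues works. The incidence identities alone cannot do it: solving them with $m_{q+1}=0$ gives the one-parameter family $m_3=t$, $m_2=(q+1)(2q+1)-3t$, $m_1=3t-2q(q+1)$, $m_0=q^2-t$, which is feasible (in non-negative integers) for all $\tfrac{2}{3}q(q+1)\le t\le q^2$, a nonempty range for every $q\ge 2$. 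And the appeal to the maximum size $m_3(2,q)$ of a $(k,3)$-arc is not a uniform citable fact; worse, it is false at $q=3$: in $\PG(2,3)$ the complement of a line is a (trivially maximal) $(9,3)$-arc, so $(8,3)=(2q+2,3)$-arcs exist there, and for general $q$ the exact values of $m_3(2,q)$ are sporadic classification results, different for $q$ even, $q$ odd and $q\equiv 0\pmod 3$. The deeper problem is structural: once you pass to the abstract $\mathcal F_1$-plane you have discarded the interaction between the two splashes, which is the only thing that can force a contained \orsl. Your disjointness step has the same defect: if $b_1\cap b_2=\{P\}$ and $R$ is the leftover point of $I$, then each of the $q+1$ $\mathcal F_1$-lines through $R$ meets $b_1$ in one point and may carry one further point of $I$, and the $q$ points of $I\setminus(\{R\}\cup b_1)$ distribute consistently over these lines, so counting $3$-secants through $R$ yields no contradiction.

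For comparison, the paper avoids all of this by working in $\PG(6,q)$ with the covers of the two splashes. Using Lemma~\ref{L:prop-cover}, it takes a point $P$ in a common plane $\gamma$ of $\ES_1\cap\ES_2$ and cover planes $\alpha$ of $\ES_1$ and $\beta$ of $\ES_2$ through $P$; if $\alpha\cap\beta$ is a line, that line is a common ruling line and immediately yields a $2$-regulus inside $\ES_1\cap\ES_2$, and if $\alpha\cap\beta=P$ a case analysis (using \cite[Lemma 7]{lavr10} and a count of spread planes in $\langle\alpha,\beta\rangle$) again produces such a regulus $\R$. Then, for a ruling line $\ell$ of $\R$ with cover planes $\alpha_1,\alpha_2$ of $\ES_1,\ES_2$ through $\ell$, the $3$-space $\langle\alpha_1,\alpha_2\rangle$ meets the remaining $q+1$ planes of the intersection in a $1$-regulus (\cite[Lemma 10]{lavr10}), whose transversal is the ruling line of a second $2$-regulus accounting for exactly those $q+1$ planes. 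This delivers existence, disjointness, and the one-from-each-family statement in a single uniform argument --- precisely the coupling of the two splash structures that your plan identifies as the obstacle but does not supply.
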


\begin{proof} We work in $\PG(6,q)$, so let 
 $\ES_1,\ES_2$ be two exterior splashes of the regular 2-spread $\S$, and suppose that $\ES_1,\ES_2$ meet in $2q+2$ planes. By Theorem~\ref{sublinesinBB}, we need to show that $\ES_1\!\,\!\cap\ES_2$ consists of two disjoint 2-reguli. We first show that $\ES_1\!\,\!\cap\ES_2$ contains a $2$-regulus.
  Let $\gamma$ be a plane in $\ES_1\!\,\!\cap\ES_2$ and $P$ a point in $\gamma$. 
Let $\alpha$  be a cover plane of $\ES_1$ through $P$, and let $\beta$ be a cover plane of $\ES_2$ through $P$. We consider two cases.
\begin{enumerate}
\item Suppose $\alpha\cap\beta$ is a line. 
By Lemma~\ref{L:prop-cover}, each  line of $\alpha$ (respectively $\beta$) is a ruling line of a distinct $2$-regulus of $\S$, and this 2-regulus is contained in $\ES_1$ (respectively  $\ES_2$). So the line 
 $\alpha\cap\beta$ is the ruling line of a 2-regulus contained in $\ES_1\cap\ES_2$, as required. 
 \item
 Suppose that $\alpha,\beta$ do not meet in a line, so $\alpha\cap\beta=P$. We consider two sub-cases.
 \begin{enumerate}
 \item Firstly suppose some line $\ell$ of $\alpha$ through $P$ meets at least three planes of $(\ES_1\!\,\!\cap\ES_2)\setminus\{\gamma\}$.
 By Lemma~\ref{L:prop-cover}, the line $\ell$ is a ruling line of a unique 2-regulus $\R\subset\ES_1$. Hence 
 the $3$-space $\Pi=\langle\ell,\beta\rangle$ meets at least three planes of $(\ES_1\!\,\!\cap\ES_2)\setminus\{\gamma\}$ in lines. Hence  by \cite[Lemma 7]{lavr10}, $\Pi$ meets all the planes in $\R$ in lines. Thus $\beta$ meets all the planes in $\R$, and so $\R$ also lies in $\ES_2$. So in this case, $\ES_1\!\,\!\cap\ES_2$ contains the $2$-regulus $\R$. 
\item For the second case, we 
 suppose that each line of $\alpha$ through $P$ meets at most two of the planes in $(\ES_1\!\,\!\cap\ES_2)\setminus\{\gamma\}$. As $|(\ES_1\!\,\!\cap\ES_2)\setminus\{\gamma\}  |=2q+1$,  
 $q$ of the lines in $\alpha$ through $P$ meet two
 planes of $(\ES_1\!\,\!\cap\ES_2)\setminus\{\gamma\}$, and the remaining line of $\alpha$ through $P$ meets one plane. 
A counting argument shows that  the 4-space $\langle\alpha,\beta\rangle$ contains exactly one plane $\delta$ of the regular $2$-spread $\S$,  and meets the remaining $q^3$ planes of $\S$ in a line.  That is,  $\alpha,\beta,\delta$ all lie in the 4-space $\langle\alpha,\beta\rangle$,  $\delta$ is a plane of $\S$, and $\alpha,\beta$ are cover planes of an exterior splash contained in  $\S$. Hence it follows that $\delta$ meets both $\alpha$ and $\beta$ in points. As $\delta\cap\alpha$ is a point, $\delta\in\ES_1$. Similarly, $\delta\in\ES_2$, so $\delta\in\ES_1\!\,\!\cap\ES_2$.

We now show that the point $P=\alpha\cap\beta$ lies in $\delta$. Suppose not, that is, $P\notin\delta$.
Now one line of $\alpha$ through $P$ contains the point $\delta\cap\alpha$, and (by the assumption above) $q$ lines of $\alpha$ through $P$ meet two planes of $(\ES_1\!\,\!\cap\ES_2)\setminus\{\gamma\}$. So let 
$\ell$ be a line of $\alpha$ through $P$, not through $\delta\cap\alpha$, that meets two planes  $\gamma_1,\gamma_2$ of $(\ES_1\!\,\!\cap\ES_2)\setminus\{\gamma\}$ in the points $G_1,G_2$ respectively.  Hence the $3$-space $\Pi=\langle\ell,\beta\rangle$ 
 meets $\gamma_1$ and $\gamma_2$ in lines. Further, both 
 $\delta$ and $\Pi$ lie in the 4-space $\langle\alpha,\beta\rangle$, so $\Pi$ meets $\delta$ in a line. 
 That is, $\Pi$ meets three planes $\gamma_1,\gamma_2,\delta$ of $(\ES_1\!\,\!\cap\ES_2)\setminus\{\gamma\}$ in  lines. These three lines lie in a unique 1-regulus $\mathscr R'$ of $\Pi$.
So $\Pi\cap\alpha$ contains the three non-collinear points $G_1$, $G_2$, and $\delta\cap \alpha$.  Hence $\alpha\subset\Pi$, contradicting $\alpha\cap \beta$ being exactly the point $P$.
Hence the point $P$ lies in $\delta$, and so $\delta=\gamma$. 
So $\Pi$ meets the three planes $\gamma_1,\gamma_2,\gamma$ of $\ES_1\cap\ES_2$ in lines, and these three lines lie in a unique 1-regulus 
 $\mathscr R'$ of $\Pi$. So $\mathscr R'$ 
   lies in the $2$-regulus $\R$ defined by $\gamma_1,\gamma_2,\gamma$. As $\ell$ meets each line of 
   $\mathscr R'$, 
   $\ell$ is a ruling line of $\R$ and so $\R\subset\ES_1$. Moreover,  as $\mathscr R'=\R\cap\Pi$,  $\beta$ meets each plane of $\R$, and so $\R\subset\ES_2$. That is, $\R$ is a $2$-regulus in $\ES_1\!\,\!\cap\ES_2$.
\end{enumerate}
\end{enumerate}
So we have shown that $\ES_1\!\,\!\cap\ES_2$ contains a $2$-regulus $\R$.  Let $\ell$ be a ruling  line of the $2$-regulus $\R$. Then by Lemma~\ref{L:prop-cover}, there is a unique cover plane $\alpha_1$ of $\ES_1$ through $\ell$, and a unique cover plane $\alpha_2$ of $\ES_2$ through $\ell$. 
So $\alpha_1$ meets $\ES_1\!\,\!\cap\ES_2\setminus\R$ in $q+1$ points of $\alpha_1\backslash\ell$, and similarly for $\alpha_2$.  
So the 3-space $\langle\alpha_1,\alpha_2\rangle$ meets $\ES_1\!\,\!\cap\ES_2\setminus\R$ in $q+1$ lines. By
\cite[Lemma 10]{lavr10}, these $q+1$ lines form a 1-regulus we denote $\mathscr R_1$. 
Let $m$ be a transversal line of $\mathscr R_1$, Then $m$ meets $q+1$ planes of $\ES_1\!\,\!\cap\ES_2\setminus\R$. Now $m$ is the ruling line of a unique 2-regulus $\mathcal R'$  which is contained in both $\ES_1$ and $\ES_2$, hence $\mathcal R'=\ES_1\!\,\!\cap\ES_2\setminus\R$.
So in $\PG(6,q)$, $\ES_1\!\,\!\cap\ES_2$ contains two disjoint 2-reguli $\R,\R'$. Hence,  by Theorem~\ref{sublinesinBB},  in $\PG(2,q^3)$, $\ES_1\!\,\!\cap\ES_2$ contains two disjoint \orsls. As the \orsls\ are  disjoint, they belong to different families. 
\end{proof}

\begin{corollary} Let $\ES_1$, $\ES_2$ be two exterior splashes of $\PG(6,q)$ that meet in $2q+2$ planes. Then each cover plane  in the cover $\EX$ of $\ES_i$ meets $\ES_1\!\,\!\cap\ES_2$ in a set of $2q+2$ points that form a line and an $\EX$-special conic. 
\end{corollary}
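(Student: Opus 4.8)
The plan is to combine Theorem~\ref{max-int-ext} with Lemma~\ref{L:prop-cover} to localise everything inside a single cover plane. By Theorem~\ref{max-int-ext}, $\ES_1\cap\ES_2$ consists of two disjoint $2$-reguli, one from each family of \orsls. By Lemma~\ref{L:prop-cover}(1)--(2) the ruling lines of any $2$-regulus all lie in cover planes of one fixed cover, and the two families of \orsls of $\ES_i$ are matched bijectively with the two covers $\EX,\EY$. So I label by $\R$ the regulus whose ruling lines lie in planes of $\EX$, and by $\R'$ the one whose ruling lines lie in planes of $\EY$. Fix $\alpha\in\EX$. Since $\alpha$ lies in $\EX$ while the $2q+2$ planes of $\ES_1\cap\ES_2$ lie in $\ES_i$, and planes from different sets of the triple $\ES_i,\EX,\EY$ meet in a unique point, $\alpha$ meets each of them in exactly one point, for $2q+2$ points in all; the task is to split these into a line and an $\EX$-special conic.

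For the line: by Lemma~\ref{L:prop-cover}(2) the $q^2+q+1$ lines of $\alpha$ are precisely the ruling lines of the $q^2+q+1$ reguli in $\R$'s family, so exactly one line $\ell$ of $\alpha$ is a ruling line of $\R$. As $\ell$ meets each of the $q+1$ pairwise disjoint planes of $\R$ in a point, these are $q+1$ distinct points, hence all of $\ell$; thus $\alpha\cap\R$ is exactly the line $\ell$. For the conic, write $X_1,\dots,X_{q+1}$ for the points $\alpha\cap\R'$. I first show no three are collinear. If a line $m\subseteq\alpha$ contained three of them, then $m$ would meet three planes of $\R'$, hence all $q+1$ by the defining property of a $2$-regulus; so $m$ would be a ruling line of $\R'$ and, by Lemma~\ref{L:prop-cover}(1), lie in a plane $\beta$ of $\EY$. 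But then $m\subseteq\alpha\cap\beta$ with $\alpha\in\EX$, $\beta\in\EY$ from different covers, whereas such planes meet in a single point, a contradiction. Hence $X_1,\dots,X_{q+1}$ form a $(q+1)$-arc of $\alpha\cong\PG(2,q)$. For $q$ odd this arc is an oval, so by Segre's theorem it is already a conic, which is automatically non-degenerate as it has $q+1$ points.

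It remains to prove, for all $q$, that the arc is a \emph{conic} and that this conic is $\EX$-special. I do this by passing to $\PG(5,q^3)$. Extending $\R'$ over the transversal $g_\S$ of $\S$ parametrises its planes as $\pi_s^\ast=\langle Q(s),Q(s)^q,Q(s)^{q^2}\rangle$, $s\in{\mathbb F}_{q^3}\cup\{\infty\}$, and $\alpha^\ast=\langle T,T^q,T^{q^2}\rangle$, where $T=g_{\EX}\cap\alpha^\ast$. I then compute the intersection point $P(s)=\alpha^\ast\cap\pi_s^\ast$ as a function of $s$, exactly as in the projection computation of Theorem~\ref{conictonrc}, to show that $s\mapsto P(s)$ is a degree-two (hence conic) parametrisation, and that its three conjugate ``imaginary'' parameter values $s_0,s_0^q,s_0^{q^2}$ return the points $T,T^q,T^{q^2}$. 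Since $T,T^q,T^{q^2}$ are exactly the points in which the transversals $g_{\EX},g_{\EX}^q,g_{\EX}^{q^2}$ of $\EX$ meet $\alpha^\ast$, the extended conic meets all three transversals of $\EX$, so the arc is an $\EX$-special conic in the sense of Definition~\ref{def:S-special}(1).

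The main obstacle is this last paragraph: verifying that the $P(s)$ trace a genuine conic (not merely an arc) and that the extended conic passes through the transversal points $T,T^q,T^{q^2}$. This is the one computational step, which I expect to carry out in the coordinatisation of Theorem~\ref{thm:Bpi} together with the explicit transversal descriptions of \cite{BJ-ext2}, mirroring the degree-two phenomenon (the quadratic polynomial $G(\theta)$) that produced the cover-plane conic in the proof of Theorem~\ref{conictonrc}. For $q$ odd the conic-ness is free from Segre's theorem, so there the only remaining work is to locate the three conjugate values $s_0,s_0^q,s_0^{q^2}$ at which $P(s)$ hits the transversals of $\EX$.
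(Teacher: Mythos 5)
Your reduction is sound as far as it goes: invoking Theorem~\ref{max-int-ext} to write $\ES_1\cap\ES_2$ as two disjoint $2$-reguli $\R,\R'$, using the hyper-regulus property to get exactly $2q+2$ intersection points in a fixed cover plane $\alpha\in\EX$, identifying via Lemma~\ref{L:prop-cover}(2) the unique line $\ell$ of $\alpha$ that is a ruling line of $\R$ (so that $\alpha$ meets the planes of $\R$ exactly in $\ell$), and the no-three-collinear argument showing $\alpha\cap\R'$ is a $(q+1)$-arc are all correct. But the decisive claim of the corollary --- that this arc is a \emph{conic} for all $q$, and moreover an $\EX$-special one, i.e.\ its extension meets the transversals $g_{\EX},g_{\EX}^q,g_{\EX}^{q^2}$ --- is precisely the step you defer (``I expect to carry out \dots''), so the proof is not complete. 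The Segre fallback only covers $q$ odd, and even there says nothing about specialness, which is the actual content of the statement. As written, you have proved that each cover plane meets $\ES_1\cap\ES_2$ in a line and a $(q+1)$-arc, not a line and an $\EX$-special conic.

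The paper closes this exact gap without any new computation, in two moves you should be aware of. First, it continues from the final paragraph of the proof of Theorem~\ref{max-int-ext}: there the $3$-space $\langle\alpha_1,\alpha_2\rangle$ meets $\ES_1\cap\ES_2\setminus\R$ in $q+1$ lines forming a $1$-regulus $\mathscr R_1$ (by \cite[Lemma 10]{lavr10}), and since a $1$-regulus meets a plane either in two lines or in an irreducible conic, $\mathscr R_1\cap\alpha_1$ is an irreducible conic exterior to $\ell$ --- irreducibility uniformly in $q$, with no appeal to Segre. Second, the fact that a conic of a cover plane whose points lie one in each plane of a $2$-regulus contained in $\ES$ is $\EX$-special is exactly the correspondence already established in \cite[Theorems 7.2, 7.3]{BJ-ext2} (the same source underlying Lemma~\ref{L:prop-cover}, and summarised in the paper's conclusion: $\EX$-special conics in cover planes correspond to \orsls\ of $\ES$). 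So your planned $\PG(5,q^3)$ parametrisation, in the style of the $G(\theta)$ computation of Theorem~\ref{conictonrc}, would amount to reproving a cited result; to repair your argument, either quote \cite[Theorems 7.2, 7.3]{BJ-ext2} at the point where you have the arc (which also disposes of conic-ness for $q$ even), or actually execute the degree-two parametrisation and the verification that it passes through $T,T^q,T^{q^2}$ --- neither of which the proposal currently contains.
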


\begin{proof}
We continue from the end of the proof of Theorem~\ref{max-int-ext}, using the same notation as in the last paragraph of the proof.  In particular,  the 3-space denoted $\langle\alpha_1,\alpha_2\rangle$ meets $\ES_1\cap\ES_2$ in a 1-regulus $\mathscr R_1$; and  $\mathscr R_1$ meets $\alpha_1\setminus\ell$ in $q+1$ points. 
 As a 1-regulus meets a  plane in either two lines, or in an irreducible conic, $\mathscr R_1$  meets $\alpha_1$ in an irreducible conic $\C$ exterior to $\ell$. 
The points of $\C$ lie in $q+1$ distinct planes of $\ES_1\cap\ES_2$, namely the planes of the  2-regulus denoted  $\mathcal R'$  above.
 Hence by \cite[Theorems  7.2, 7.3]{BJ-ext2} $\C$ is an $\EX$-special conic, where $\EX$ is the cover of $\ES$ that contains $\alpha_1$. So $\ES_1\!\,\!\cap\ES_2$ meets $\alpha_1$ in the line $\ell$ and an $\EX$-special conic. 
A similar result holds for the plane $\alpha_2$. Moreover, this situation generalises to any cover plane of $\ES_i$.
\end{proof}

\section{Conclusion}

This article completes our investigation of exterior \orsps\ of $\PG(2,q^3)$ begun in \cite{BJ-ext1,BJ-ext2}. We note that the proofs in this article are a mix of geometric arguments and coordinate based arguments. It would be nice to exploit the underlying geometry of the structure to prove more results using a geometric argument.

%
%
%

A recurrent theme of the results
on exterior \orsps\ is that the conics and twisted cubics of greatest interest are ``special''. To summarise, we have for a fixed exterior splash $\ES$ with conic cover $\EC$ and tangent cover $\ET$:
\begin{itemize}
\item $\EC$-special conics in a plane of $\EC$ (and $\ET$-special conics in a plane of $\ET$) correspond to \orsls\ of $\ES$ (in different families).
\item $\ES$-special twisted cubics correspond to lines of an associated \orsp.
\item $\EC$-special twisted cubics (and $\ET$-special twisted cubics) correspond to special conics in some associated \orsp.
\end{itemize}
Each result has been proved individually, however, it would be interesting to have a unifying theory to explain 
these relationships. 
We note that Theorem~\ref{thm:quad-trans}
 (where we show that the nine quadrics  defining $[\pi]$ in $\PG(6,q)$ all  contain the transversals $\gs$ and $\gc$) goes someway towards understanding this.

\end{document}